\newtheorem{thm}{Theorem}[section]
\newtheorem{prop}[thm]{Proposition}
\newtheorem{lem}[thm]{Lemma}
\newtheorem{cor}[thm]{Corollary}
\newtheorem{asm}{Assumption}
\theoremstyle{remark}
\newtheorem{rem}[thm]{Remark}
\theoremstyle{definition}
\newcommand{\ra}{\rightarrow}
\newcommand{\Lra}{\Longrightarrow}
\newcommand{\R}{\mathbb R}     
\newcommand{\Z}{\mathbb Z}     
\renewcommand{\a}{\alpha}
\renewcommand{\b}{\beta}
\renewcommand{\d}{\delta}
\newcommand{\e}{\varepsilon}
\renewcommand{\l}{\lambda}
\newcommand{\s}{\sigma}
\renewcommand{\k}{\kappa}
\newcommand{\limn}{\underset{n\rightarrow\infty}{\longrightarrow}}
\newcommand{\fl}[1]{\lfloor #1 \rfloor}  
\newcommand{\ind}[1]{ \mathbf{1}_{ \{ #1 \} } } 
\newcommand{\be}{\begin{equation}}
\newcommand{\ee}{\end{equation}}
\DeclareMathOperator{\Var}{Var}   \DeclareMathOperator{\Cov}{Cov}  
\def\Pv{\mathbf{P}}  \def\Ev{\mathbf{E}}  \def\Varv{\mathbf{Var}} 
\def \eil{\stackrel{{\rm law}}{=}} 
\def\ProbSpace{\bigl( \Omega, {\mathcal F}, \Pv\bigr)} 
\def\one{{\bf 1}}
\newcommand{\w}{\omega}              
\renewcommand{\P}{\mathbb{P}}        
\newcommand{\E}{\mathbb{E}}          
\newcommand{\vp}{\mathrm{v}_P}       
\newcommand{\tc}{C_0}		
\newcommand{\nc}{\bar{\a}}		
\begin{document}

\title[Weak quenched limits]{Weak quenched limiting distributions for transient one-dimensional random walk in a random environment}
\author{Jonathon Peterson}
\address{Jonathon Peterson \\  Cornell University \\ Department of Mathematics \\ Malott Hall \\ Ithaca, NY 14853 \\ USA}
\curraddr{Purdue University \\ Department of Mathematics \\ 150 N. University Street \\ West Lafayette, IN 47907 \\ USA}
\email{peterson@math.purdue.edu}
\urladdr{http://www.math.purdue.edu/~peterson}
\thanks{J. Peterson was partially supported by National Science Foundation grant DMS-0802942.}

\author{Gennady Samorodnitsky}
\address{Gennady Samorodnitsky \\ Cornell University \\ School of Operations Research and Information Engineering \\ Ithaca, NY 14853 \\ USA}
\email{gennady@orie.cornell.edu}
\urladdr{http://legacy.orie.cornell.edu/~gennady/}
\thanks{G. Samorodnitsky was partially supported by ARO grant
  W911NF-10-1-0289  and NSF grant DMS-1005903 at Cornell University}

\subjclass[2000]{Primary 60K37; Secondary 60F05, 60G55}
\keywords{Weak quenched limits, point processes, heavy tails}

\date{\today}

\begin{abstract}
We consider a one-dimensional, transient random walk in a random
i.i.d. environment. The asymptotic behaviour of such
random walk depends to a large extent on a crucial parameter
$\kappa>0$ that determines the fluctuations of the process. 
When $0<\k<2$, the averaged distributions of the hitting times of the random walk converge to a $\kappa$-stable distribution. However, it was shown recently that in this case there does not exist a quenched limiting distribution of the hitting times.
That is, it is not true that
for almost every fixed environment, the 
distributions of the hitting times (centered and scaled in any manner)
converge to a non-degenerate distribution. We
show, however, that the quenched distributions do have a limit in the
weak sense. That is, the quenched distributions of the hitting times 
-- viewed as a random probability measure on $\R$ -- converge in
distribution to a random probability measure, which has interesting
stability properties.  Our results generalize both the averaged limiting distribution and the non-existence of quenched limiting distributions. \\

 Nous consid\'erons une marche al\'eatoire unidimensionnelle dans un environnement i.i.d. Le comportement asymptotique d'une telle marche al\'eatoire d\'epend largement d'un param\`etre crucial $\kappa$ qui d\'etermine les fluctuations du processus. Si $0< \kappa <2$, alors les distributions moyennis\'ees des temps d'atteinte de la marche al\'eatoire convergent vers une loi $\kappa$-stable. Cependant, il a \'et\'e r\'ecemment prouv\'e que dans ce cas l\`a, il n'existe pas de distribution limite des temps d'atteinte \`a environnement fix\'e. C'est-\`a-dire, il n'est pas vrai que presque tout environnement fix\'e , les distributions des temps d'atteinte (centr\'es et normalis\'es de quelque mani\`ere que ce soit) convergent vers une distribution non d\'eg\'en\'er\'ee. Nous montrons n\'eanmoins que les distributions \`a environnement fix\'e ont une limite au sens faible.  Plus pr\'ecis\'ement,  les distributions \`a environnement fix\'e des temps d'atteinte -- vues comme des mesures de probabilit\'e al\'eatoires sur $\mathbb{R}$ -- convergent en distribution vers une mesure de probabilit\'e al\'eatoire qui a d\'int\'eressantes propri\'et\'es de stabilit\'e. Nos r\'esultats g\'en\'eralisent \`a la fois la limite des distributions moyennis\'ees et la non existence de distributions limites \`a environnement fix\'e.
\end{abstract}

\maketitle

\section{Introduction} \label{sec:intro}

A random walk in a random environment (RWRE) is a Markov chain with
transition probabilities that are chosen randomly ahead of time. The
collection of transition probabilities are referred to as the
\emph{environment} for the random walk. We will be concerned with
nearest-neighbor RWRE on $\Z$, in which case the space of environments
may be identified with $\Omega = [0,1]^\Z$, endowed with the
cylindrical $\sigma$-field. 
Environments $\w=\{\w_x\}_{x\in\Z} \in \Omega$ are chosen according to a probability measure $P$ on $\Omega$. 

Given an environment $\w = \{ \w_x \}_{x\in\Z} \in \Omega$ and an initial location $x\in\Z$, we let $\{X_n\}_{n\geq0}$ be the Markov chain with law $P_\w^x$ defined by $P_\w^x(X_0 = x) = 1$, and
\[
 P_\w^x\left( X_{n+1} = z \, | \, X_n = y \right) = 
\begin{cases}
 \w_y & z= y+1 \\
 1-\w_y & z=y-1 \\
 0 & \text{otherwise}. 
\end{cases}
\]
Since the environment $\w$ is random, $P_\w^x(\cdot)$ is a random probability measure and is called the \emph{quenched} law. By averaging over all environments we obtain the \emph{averaged} law 
\[
 \P^x(\cdot) = \int_\Omega P_\w^x(\cdot) \, P(d\w). 
\]
Since we will usually be concerned with RWRE starting at $x=0$, we will denote $P_\w^0$ and $\P^0$ by $P_\w$ and $\P$, respectively. 
Expectations with respect to $P$, $P_\w$, and $\P$ will be denoted by $E_P$, $E_\w$ and $\E$, respectively. Throughout the paper we will use $\Pv$ to denote a generic probability law, separate from the RWRE, with corresponding expectations $\Ev$. 

We will make the following assumptions on the distribution $P$ on environments
\begin{asm}\label{iidasm}
 The environments are i.i.d. That is, $\{\w_x\}_{x\in\Z}$ is an
 i.i.d.\ sequence of random variables under the measure $P$. 
\end{asm}
\begin{asm}\label{tasm}
 The expectation $E_P[ \log \rho_0 ]$ is well defined and 
$E_P[ \log \rho_0 ]< 0$. Here $\rho_i = \rho_i(\w) =
 \frac{1-\w_i}{\w_i}$, for all $i\in\Z$.  
\end{asm}
In Solomon's seminal paper on RWRE \cite{sRWRE}, he showed that Assumptions \ref{iidasm} and \ref{tasm} imply that the RWRE is transient to $+\infty$. That is, $\P(\lim_{n\ra\infty} X_n = +\infty ) = 1$. Moreover, Solomon also proved a law of large numbers with an explicit formula for the limiting velocity $\vp = \lim_{n\ra\infty} X_n/n$. Interestingly, $\vp>0$ if and only if $E_P[\rho_0] < 1$, and thus one can easily construct examples of RWRE that are transient with ``zero speed.'' 

Soon after Solomon's original paper, Kesten, Kozlov, and Spitzer
\cite{kksStable} analyzed the limiting distributions of transient RWRE
under the following additional assumption.  
\begin{asm}\label{kasm}
 The distribution of $\log \rho_0$ is non-lattice under $P$, and there exists a $\kappa > 0$ such that $E_P[ \rho_0^\kappa ]= 1$ and $E_P[ \rho_0^\k \log \rho_0 ] < \infty$. 
\end{asm}
Kesten, Kozlov, and Spitzer obtained limiting distributions for the random walk $X_n$ by first analyzing the limiting distributions of the hitting times 
\[
 T_x := \inf \{ n\geq 0: X_n = x \}. 
\]
Let $\Phi(x)$ be the distribution function of the standard normal
distribution, and let $L_{\kappa,b}(x)$ be the distribution function
of a totally skewed to the right stable istribution of index $\kappa
\in(0,2)$ with scaling parameter $b>0$ and zero shift; see
\cite{samorodnitsky:taqqu:1994}. 
\begin{thm}[Kesten, Kozlov, and Spitzer \cite{kksStable}]\label{kksthm}
Suppose that Assumptions \ref{iidasm} - \ref{kasm} hold, and let
$x\in\R$. 
\begin{enumerate}
 \item If $\kappa \in(0,1)$, then there exists a constant $b>0$ such that
\[
 \lim_{n\ra\infty} \P\left( \frac{T_n}{n^{1/\kappa}} \leq x \right) = L_{\kappa,b}(x).
\]
 \item If $\kappa = 1$, then there exist constants $A,b>0$ and a
   sequence $D(n) \sim A\log n$ so that
\[
 \lim_{n\ra\infty} \P\left( \frac{T_n-n D(n)}{n} \leq x \right) = L_{1,b}(x). 
\]
 \item If $\kappa \in (1,2)$, then there exists a constant $b>0$ such that 
\[
 \lim_{n\ra\infty} \P\left( \frac{T_n - n/\vp}{n^{1/\kappa}} \leq x \right) = L_{\kappa,b}(x).
\]
 \item If $\kappa = 2$, then there exists a constant $\s>0$ such that 
\[
\lim_{n\ra\infty} \P\left( \frac{T_n - n/\vp}{\s \sqrt{n\log n}} \leq x \right) = \Phi(x).
\]
 \item If $\kappa > 2$, then there exists a constant $\s>0$ such that 
\[
\lim_{n\ra\infty} \P\left( \frac{T_n - n/\vp}{\s \sqrt{n}} \leq x \right) = \Phi(x).
\]
\end{enumerate}
\end{thm}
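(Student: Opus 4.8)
The plan is to reduce the study of $T_n$ to a sum of nearly independent, heavy-tailed terms via Kozlov's branching-process representation, and then to invoke the classical stable and Gaussian limit theorems for sums of i.i.d.\ random variables. The starting point is the bookkeeping identity
\[
 T_n \;=\; n + 2\sum_{i\leq n-1} U_i^{(n)},
\]
in which $U_i^{(n)}$ is the number of traversals of the directed edge $i\to i-1$ made by the walk before time $T_n$; it records that every edge $(i-1,i)$ with $1\le i\le n$ is crossed upward once more than downward, and every edge with $i\le 0$ equally often in each direction. Conditionally on the environment $\w$, and reading the index $i$ downward from $n-1$, the sequence $\bigl(U_i^{(n)}\bigr)$ is a branching process in the random environment $\{\w_i\}$ (read from right to left) with one immigrant per generation: given $U_{i+1}^{(n)}$, the variable $U_i^{(n)}$ is a sum of $U_{i+1}^{(n)}+1$ i.i.d.\ $\mathrm{Geom}(\w_i)$ offspring, so that the mean offspring number in generation $i$ is $\rho_i$. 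Since $E_P[\log\rho_0]<0$, this process is on average subcritical, its law relaxes as $i\to-\infty$ to a stationary law $R_\infty$, and -- the environment being i.i.d.\ -- the walk regenerates at certain ladder epochs of the potential $V_k=\sum_{j=1}^{k}\log\rho_j$, yielding a decomposition of $T_n$ into i.i.d.\ blocks $Y_1,Y_2,\dots$ (after the first), with $\asymp n$ of them contributing.

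The crucial quantitative input -- and, together with the reduction just sketched, the step I expect to be the main obstacle -- is that the block sums (equivalently, the stationary value $R_\infty$) have a regularly varying tail of index $\kappa$: $\P(Y_1>x)\sim c\,x^{-\kappa}$ as $x\to\infty$, for some $c\in(0,\infty)$. This is precisely the regime of implicit renewal theory for the random products $\prod_{i=1}^m\rho_i$ -- Kesten's renewal theorem, equivalently the Kesten--Goldie theorem for stochastic recurrence equations $R\eid\rho_0 R'+Q$ with multiplier $\rho_0$ -- and Assumption \ref{kasm} is exactly what that theory requires: $E_P[\rho_0^\kappa]=1$ fixes the index, $E_P[\rho_0^\kappa\log\rho_0]<\infty$ supplies the needed integrability, and the non-lattice hypothesis on $\log\rho_0$ rules out oscillation in the tail and produces a genuine asymptotic constant. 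The remaining work at this stage -- showing $c>0$, and verifying that the boundary contributions near $0$ and near $n$, and the passage from the finite-horizon variables $U_i^{(n)}$ to their infinite-horizon/stationary versions, are negligible at the relevant scale (using transience and the exponential decay coming from $E_P[\log\rho_0]<0$) -- is the technical heart of the argument.

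Granting this, $T_n$ equals, up to asymptotically negligible terms, an affine function of a sum $S_{m(n)}=\sum_{k=1}^{m(n)}Y_k$ of $m(n)\asymp n$ i.i.d.\ nonnegative variables with $\P(Y_1>x)\sim c\,x^{-\kappa}$, and the five cases follow from one-dimensional limit theory for such sums. If $\kappa\in(0,1)$ the $Y_k$ have infinite mean, no centering is needed, $n^{-1/\kappa}S_{m(n)}$ converges to a totally skewed-to-the-right $\kappa$-stable law, and the leading $n$ in $T_n=n+2S_{m(n)}+(\text{small})$ is of lower order, so $T_n/n^{1/\kappa}\limd L_{\kappa,b}$, which is (1). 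If $\kappa=1$ the mean is barely infinite; truncated-mean centering produces a shift $D(n)\sim A\log n$ at scale $n$ and a $1$-stable limit, which is (2). If $\kappa\in(1,2)$ the mean is finite but the variance infinite; centering by $\E[Y_1]$ -- which, by the ergodic theorem for the stationary environment, turns into the centering $n/\vp$ through Solomon's identity $1/\vp=(1+E_P[\rho_0])/(1-E_P[\rho_0])$ (applicable since $\kappa>1$ forces $E_P[\rho_0]<1$) -- together with scaling by $n^{1/\kappa}$ gives a $\kappa$-stable limit, which is (3). If $\kappa=2$ the variance is infinite with a logarithmic defect, forcing the normalization $\s\sqrt{n\log n}$ and a Gaussian limit, (4); and if $\kappa>2$ the variance is finite and the ordinary central limit theorem with normalization $\s\sqrt n$ gives (5). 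In the heavy-tailed cases the scaling parameter $b$ is read off from $c$, and in the Gaussian cases $\s$ from the (truncated) second moment of $Y_1$.
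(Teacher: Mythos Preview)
The paper does not prove Theorem~\ref{kksthm}. It is stated there as background, attributed to Kesten, Kozlov, and Spitzer \cite{kksStable}, and used only to motivate the weak quenched limit problem; no proof or proof sketch appears anywhere in the paper. So there is nothing to compare your proposal against in the strict sense of the assignment.

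That said, your outline is a fair high-level summary of the original KKS strategy: the branching-process-with-immigration representation of the down-crossing counts, subcriticality from $E_P[\log\rho_0]<0$, the Kesten--Goldie tail $\P(\cdot>x)\sim c\,x^{-\kappa}$ for the stationary solution of the affine recursion (this is exactly where Assumption~\ref{kasm} enters), and then the classical stable/Gaussian domains of attraction to read off the five cases. One caveat: the regeneration you invoke via ladder epochs of the potential is not how KKS organize the argument --- their 1975 paper works directly with the BPRE and an auxiliary truncation, not with the $\nu_i$ regeneration structure. The latter is closer to how the present paper (and \cite{pzSL1,p1LSL2}) proceeds, so you are blending two generations of technique. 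This is harmless at the level of a sketch, but if you intend a self-contained proof you should commit to one route.

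It is worth noting that the paper's own main results do, as a by-product, yield an \emph{alternative} proof of cases (1)--(3) of Theorem~\ref{kksthm}: the remark following Corollary~\ref{wqlXn} observes that taking $E_P$-expectations in Theorem~\ref{wqlTnA} recovers the averaged stable limits, since $\Ev[H(N_{\l,\k})(-\infty,x]]=L_{\k,b}(x)$. This route goes through the Poisson point process limit of Proposition~\ref{PPconvergence} rather than through the BPRE, and it does not touch the Gaussian cases $\kappa\geq 2$.
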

Theorem \ref{kksthm} is then used in \cite{kksStable} in the natural
way  to obtain  averaged limiting distributions for the 
random walk itself, but for the sake of space we do not state the
precise statement here.  
It should be noted that a formula for the scaling parameter $b>0$ appearing above when $\k<2$ has been obtained recently in \cite{eszStable,estzWQL}. 

It was not until more recently that the limiting distributions of the hitting time and the random walk were studied under the quenched distribution.
In the case when $\k>2$, Alili proved a quenched central limit theorem for the hitting times of the form
\be\label{qcltTn}
 \lim_{n\ra\infty} P_\w\left( \frac{T_n - E_\w T_n}{\s_1 \sqrt{n}} \leq x \right) = \Phi(x), \quad \forall x\in\R, \quad P-a.s.,
\ee
where $\s_1^2 = E_P[ \Var_\w T_1 ] < \infty$ \cite{aRWRE}. 
The environment-dependent centering term $E_\w T_n$ makes it difficult to use \eqref{qcltTn} to obtain a quenched central limit theorem for the random walk, but this difficulty was overcome independently by Goldsheid \cite{gQCLT} and Peterson \cite{pThesis} to obtain a quenched central limit theorem for the random walk (also with an environment-dependent centering). 

When $\k<2$ the situation is quite different. Even though one could
reasonably expect that, similarly to \eqref{qcltTn}, a limiting stable
distribution of index $\k$ existed (possibly with environment-dependent
centering or scaling), this has turn out not be the case. In fact, it
was shown in \cite{pzSL1,p1LSL2} 
that quenched limiting distributions do not exist when $\k<2$.  
For $P$-a.e. environment $\w$, there exist two (random) subsequences
$n_k = n_k(\w)$ and $m_k=m_k(\w)$ so that the limiting distributions
of $T_{n_k}$ and $T_{n_k'}$ under the measure $P_\w$ are Gaussian and
shifted exponential, respectively. That is, 
\[
 \lim_{k\ra\infty} P_\w\left( \frac{T_{n_k} - E_\w T_{n_k}}{\sqrt{\Var_\w T_{n_k}}} \leq x \right) = \Phi(x), \quad \forall x \in \R,
\]
and
\[
\lim_{k\ra\infty} P_\w\left( \frac{T_{m_k} - E_\w T_{m_k}}{\sqrt{\Var_\w T_{m_k}}} \leq x \right) = 
\begin{cases}
 1-e^{-x-1} & x > -1 \\
0 & x \leq -1,
\end{cases}
\quad \forall x \in \R.
\]
These subsequences were then used to show the non-existence of quenched limiting distributions for the random walk as well \cite{pzSL1,p1LSL2}. 

These results of \cite{pzSL1,p1LSL2} are less than completely
satisfying because one would like to be able to say something about
the quenched distribution after a large number of steps. Also, the
existence of subsequential limiting distributions that are Gaussian
and shifted exponential begs the question of whether and what other
types of distributions are possible to obtain through subsequences.  
The proof of the non-existence of quenched limiting distributions in
\cite{p1LSL2} implies, for large $n$, the magnitude of the hitting
time  $T_n$ is determined, to a large extent, by the amount of time it
takes the random walk to pass a few 
``large traps'' in the interval $[0,n]$. Moreover, as was shown in
\cite[Corollary 4.5]{p1LSL2}, the time to cross a ``large trap'' is
approximately an exponential random variable with parameter depending
on the ``size'' of the trap. Therefore, one would hope that the
quenched distribution of $T_n$ could be described in terms of some
random (depending on $\w$) weighted sum of exponential random variables.  
Our main results confirm this by showing that the quenched
distribution -- viewed as a random probability measure on $\R$ --
converges in distribution on the space of probability measures
to the law of a certain random infinite weighted sum  of exponential
random variables.

Before stating our main result, we introduce some notation. Let
$\mathcal{M}_1$ be the space of probability measures on
$(\R,\mathcal{B}(\R))$, where $\mathcal{B}(\R)$ is the Borel
$\s$-field.  
Recall that $\mathcal{M}_1$ is a complete, separable metric space when
equipped with the Prohorov metric 
\be\label{M1metric}
 \rho(\pi,\mu) = \inf \{ \e>0 \, : \, \pi(A) \leq \mu(A^\e ) + \e, \, \,
\mu(A) \leq \pi(A^\e ) + \e \, \,  \forall A\in \mathcal{B}(\R) \},
\quad \pi,\mu \in \mathcal{M}_1,  
\ee
where $A^\e := \{ x \in \R \, : \, |x-y| < \e \text{ for some } y \in
A \}$ is the $\e$-neighbourhood of $A$.  
By a random probability measure we mean a $\mathcal{M}_1$-valued
random variable, and we denote convergence in distribution of a
sequence of random probability measures by $\mu_n \Lra \mu$; see
\cite{bCOPM}. This
notation does carry the danger of being confused with the weak
convergence of probability measures on $\R$, but we prefer it to the
more proper, but awkward, notation ${\mathcal L}_{\mu_n}\Lra
{\mathcal L}_{\mu}$ with ${\mathcal L}_{\mu}$ being the law of a
random measure $\mu$. 

Next, let $\mathcal{M}_p$ be the space of Radon point processes on
$(0,\infty]$; these are the point processes assigning a finite mass to
 all  sets $(x,\infty]$ with $x>0$.  We equip $\mathcal{M}_p$
   with the standard topology of vague convergence. This topology
   can be metrized to make $\mathcal{M}_p$ a complete separable metric
   space; see \cite[Proposition 3.17]{rEVRVPP}. For point processes in
   $\mathcal{M}_p$ we denote  vague convergence by $\zeta_n
   \overset{v}{\ra}  \zeta$.   
An $\mathcal{M}_p$-valued random variable will be called a random point
process,  and, as above, we will use the somewhat improper notation
$\zeta_n \Lra \zeta$ to denote convergence in distribution of random
point processes.

We define a mapping $\bar{H}:\mathcal{M}_p \ra \mathcal{M}_1$ in the
following manner. Let $\zeta = \sum_{i\geq 1} \d_{x_i}$, where $(x_i)$
is an arbitrary enumeration of the points of $\zeta\in
\mathcal{M}_p$. We let $\bar H(\zeta)$ to be the probability measure
defined by   
\be\label{barHdef}
 \bar{H}(\zeta)(\cdot) = 
\begin{cases}
\Pv\left( \sum_{i\geq 1} x_i(\tau_i -1) \in \cdot \,  \right) & \sum_{i\geq 1} x_i^2 < \infty \\
\d_0(\cdot)& \text{otherwise}, 
\end{cases}
\ee
where, under a probability measure $\Pv$, 
$(\tau_i)$ is a sequence of i.i.d.\ mean 1 exponential random
variables. Note that the condition $\sum_{i\geq 1} x_i^2 < \infty$
guarantees that the sum inside the probability converges $\Pv$-a.s. It
is clear that the mapping $\bar H$ is well defined in the sense that
$\bar{H}(\zeta)$ does not depend on the enumeration of the points of
$\zeta$. 
We defer the proof of the following lemma to Appendix \ref{Hmeas}.  

\begin{lem} \label{l:meas}
The map $\bar{H}$ is measurable.
\end{lem}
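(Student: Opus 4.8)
The plan is to show measurability by exhibiting $\bar H$ as a composition/limit of measurable maps, working with a convenient generating class for the Borel $\sigma$-field on $\mathcal{M}_1$. Recall that the Borel $\sigma$-field on $\mathcal{M}_1$ (with the Prohorov topology) is generated by the evaluation maps $\mu \mapsto \mu(f) = \int f\, d\mu$ for $f$ bounded continuous, and equivalently by the maps $\mu \mapsto \mu((-\infty,t])$ for $t \in \R$. So it suffices to show that for each fixed bounded continuous $f:\R\to\R$, the map $\zeta \mapsto \bar H(\zeta)(f)$ is measurable on $\mathcal{M}_p$. On the set $\{\zeta : \sum_i x_i^2 = \infty\}$ this map is constant (equal to $f(0)$), so I would first check that this set is measurable — which follows because $\zeta \mapsto \sum_i x_i^2 = \lim_{m\to\infty}\sum_i (x_i \wedge m)^2\, \indd{x_i > 1/m}$ is a monotone limit of measurable functionals of $\zeta$ (each truncated sum is vaguely continuous on the complement of a null set of configurations, and in any case measurable). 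Then it remains to handle the map on the complementary set.

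On $\{\sum_i x_i^2 < \infty\}$, I would write $\bar H(\zeta)(f) = \Ev\bigl[ f\bigl( \sum_{i\geq 1} x_i(\tau_i - 1)\bigr)\bigr]$. The key idea is to approximate by truncation: order the points so that only points exceeding level $\delta$ are retained, i.e. set $S_\delta(\zeta) = \sum_{i: x_i > \delta} x_i(\tau_i-1)$, which is a finite sum since $\zeta$ is Radon on $(0,\infty]$. For fixed $\delta$, the number of points above $\delta$ and their locations depend measurably (indeed, on a suitable subset, continuously) on $\zeta$ in the vague topology, so $\zeta \mapsto \Ev[f(S_\delta(\zeta))]$ is measurable — one can make this precise by noting that $\zeta \mapsto (\zeta((\delta,\infty]), \text{ordered points above }\delta)$ is measurable into $\bigsqcup_k \{k\}\times \{x_1 \geq \cdots \geq x_k > \delta\}$, and on each stratum the expectation is a continuous function of $(x_1,\dots,x_k)$ by dominated convergence. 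Then, since $\sum_i x_i^2 < \infty$ forces $S_\delta(\zeta) \to \sum_i x_i(\tau_i-1)$ in $L^2(\Pv)$ (hence in probability, hence in distribution) as $\delta \downarrow 0$, we get $\Ev[f(S_\delta(\zeta))] \to \bar H(\zeta)(f)$ for every such $\zeta$. A pointwise limit of measurable functions is measurable, so $\zeta \mapsto \bar H(\zeta)(f)$ is measurable on $\{\sum_i x_i^2 < \infty\}$, and combining the two pieces finishes the proof.

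The main obstacle I anticipate is the bookkeeping around ``ordering the points'': one must verify carefully that selecting and ordering the points of $\zeta$ above a threshold $\delta$ is a genuinely measurable operation $\mathcal{M}_p \to \bigsqcup_k \R^k$, and that the threshold $\delta$ can be chosen (or a limit over rational $\delta$ taken) so as to avoid the measure-zero complications when a point sits exactly at level $\delta$. This is standard point-process measure theory (it is essentially the statement that the $k$-th largest point is a measurable functional, cf. Resnick \cite{rEVRVPP}), but it is the one spot where the argument needs genuine care rather than routine manipulation. Everything else — the measurability of $\{\sum_i x_i^2 < \infty\}$, the $L^2$ convergence of the truncated sums, and the passage from convergence in distribution to convergence of $\mu \mapsto \mu(f)$ — is straightforward given the defining condition $\sum_i x_i^2 < \infty$.
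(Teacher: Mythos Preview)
Your proposal is correct and shares the same core ingredients as the paper's proof --- measurability of the set $\{\sum_i x_i^2 < \infty\}$, measurability of the ordered-point functionals $\zeta \mapsto x_{(k)}$, and $L^2$ convergence of the truncated sums --- but it is organized differently. The paper factors $\bar H = \bar H_2 \circ \bar H_1$ through $l^2$: the first map $\bar H_1:\mathcal{M}_p\to l^2$ sends $\zeta$ to its nonincreasing sequence of points (or to $\mathbf{0}$ if not square-summable), and its measurability reduces exactly to the continuity of each $\zeta\mapsto x_{(k)}$ together with the fact that $F_m=\{\sum_i x_{(i)}^2\le m\}$ is closed (by Fatou); the second map $\bar H_2:l^2\to\mathcal{M}_1$ sends $\mathbf{x}$ to the law of $\sum_i x_i(\tau_i-1)$ and is shown to be \emph{continuous} via the one-line $L^2$ computation $\Ev\bigl(\sum_i (x_i^{(n)}-y_i)(\tau_i-1)\bigr)^2 = \|\mathbf{x}^{(n)}-\mathbf{y}\|_2^2$. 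Your approach instead works directly with test functions $\mu\mapsto\mu(f)$ and approximates by the truncated maps $\zeta\mapsto \Ev[f(S_\delta(\zeta))]$, which are essentially the maps $\bar H_\e$ the paper introduces later in Section~\ref{quenchedcentering}. The paper's factorization buys a cleaner separation of the point-process bookkeeping from the analytic part, and --- more importantly for the sequel --- it yields continuity of $\bar H_2$ on all of $l^2$, a fact the paper reuses to establish \eqref{econ}. Your truncation route is equally valid and has the virtue of anticipating the $\bar H_\e$ machinery; the ``ordering the points above $\delta$'' step you flag as the delicate point is precisely what the paper handles by observing that each $\bar H_{1,k}$ is continuous.
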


We are now ready to state our first main result, describing the weak
quenched limiting distribution for the hitting times centered by the
quenched mean.  
\begin{thm}\label{wqlTn}
 Let Assumptions \ref{iidasm} - \ref{kasm} hold, and 
for any $\w \in \Omega$ let $\mu_{n,\w}\in \mathcal{M}_1$ be defined by 
\begin{equation} \label{e:mu.bar}
{\bar \mu}_{n,\w}(\cdot) = P_\w\left( \frac{T_n - E_\w T_n}{n^{1/\k}} \in \cdot \right).
\end{equation}
 Then  there exists a $\l>0$ such that
 $\bar{\mu}_{n,\w} \Lra \bar H(N_{\l,\k})$ where $N_{\l,\k}$ is a
 non-homogeneous Poisson point process on $(0,\infty)$ with intensity
 $ \l x^{-\k-1}$. 
\end{thm}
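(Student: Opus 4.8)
The plan is to represent the quenched hitting time $T_n$ in a way that makes the point-process structure transparent, then push forward a point-process convergence through the map $\bar H$. The starting point is the standard decomposition of $T_n$ (under $P_\w$) as a sum of independent hitting-time increments $T_n = \sum_{i=1}^{n} (T_i - T_{i-1})$. Centering by $E_\w T_n$ and scaling by $n^{1/\k}$, one has $\bar\mu_{n,\w}$ equal to the $P_\w$-law of $\sum_{i=1}^n \nu_{i,n}$, where $\nu_{i,n} = (T_i - T_{i-1} - E_\w(T_i - T_{i-1}))/n^{1/\k}$ are independent and centered. The heuristic from \cite{p1LSL2} is that the dominant contributions come from a few large traps, and crossing a large trap takes an approximately exponential time. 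So I would introduce the quenched variances $\beta_{i,n} := \Var_\w(T_i - T_{i-1})^{1/2}/n^{1/\k}$ (or a closely related trap-size functional, e.g.\ built from the potential) and show that the point process $\sum_{i=1}^n \delta_{\beta_{i,n}}$ on $(0,\infty]$ converges in distribution, in $\mathcal{M}_p$, to the Poisson point process $N_{\l,\k}$. This is the analogue of the classical fact that the points of a heavy-tailed i.i.d.\ sample, suitably normalized, converge to a Poisson process with intensity $\l x^{-\k-1}$; here the ``sample'' is not literally i.i.d., but the $\beta_{i,n}$ are built from the i.i.d.\ environment in a finite-range way, and the relevant tail estimate $P(\beta_{1,n} > x) \sim C x^{-\k}$ comes from the Kesten-type renewal estimate $P(\text{trap size} > u) \sim C u^{-\k}$ that already underlies Theorem \ref{kksthm}. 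The constant $\l$ is then read off from this tail constant $C$.

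\textbf{Key steps, in order.} (1) Reduce $\bar\mu_{n,\w}$ to the law of a weighted sum of independent centered variables indexed by the traps, and quantify the trap-crossing times: using \cite[Corollary 4.5]{p1LSL2}, a single crossing of a trap of size $\beta$ is, up to a vanishing error, $\beta(\tau - 1)$ with $\tau$ mean-$1$ exponential, plus a negligible remainder. (2) Prove $\sum_{i=1}^n \delta_{\beta_{i,n}} \Lra N_{\l,\k}$ in $\mathcal{M}_p$; this uses Assumption \ref{iidasm} (independence over a sparse set of traps, after showing large traps are well-separated) together with the regularly varying tail of a single trap, and is where Assumption \ref{kasm} enters through the exponent $\k$. (3) Show the small traps are negligible: $\sum_{i : \beta_{i,n} \le \e} \nu_{i,n} \to 0$ in the appropriate sense, uniformly as $\e \downarrow 0$, so that only the large-trap part survives — this is the standard truncation argument for convergence to infinitely divisible laws, but here carried out \emph{inside} the quenched measure, so it must be shown in $P$-probability at the level of random measures. (4) Check that $\bar H$ is continuous on a set of point-process configurations carrying full limit mass (the limiting $N_{\l,\k}$ satisfies $\sum x_i^2 < \infty$ a.s.\ since $\k < 2$, and $\sum x_i = \infty$ when $\k \ge 1$ but the centered sum still converges), so that $\bar\mu_{n,\w} = \bar H\big(\sum \delta_{\beta_{i,n}}\big) + o(1) \Lra \bar H(N_{\l,\k})$ by the continuous mapping theorem for weak convergence of random measures (cf.\ \cite{bCOPM}), using Lemma \ref{l:meas} for measurability and a continuity-set argument for the mapping.

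\textbf{Where the difficulty lies.} The main obstacle is Step (3) together with the continuity in Step (4): one must control the \emph{joint} behavior of the small-trap contributions and the large-trap point process at the level of the random probability measure $\bar\mu_{n,\w}$, not merely at the level of one-dimensional distributions. Concretely, the delicate point is that $\bar H$ is \emph{not} continuous everywhere on $\mathcal{M}_p$ — configurations where $\sum x_i^2$ is finite but barely so, or where mass escapes to $0$, are problematic — so one needs that the approximating point processes $\sum \delta_{\beta_{i,n}}$ avoid the bad set asymptotically and that the truncation error vanishes uniformly enough to commute with the weak limit. A secondary, more technical obstacle is justifying the replacement of the true quenched trap-crossing times by exact exponentials with the error being negligible \emph{as a perturbation of a random measure} (so the error must be shown small, say in Prohorov distance, in $P$-probability), which requires the quantitative exponential approximation of \cite{p1LSL2} to be strong enough and uniform over the $O(\log n)$-many relevant traps. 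Identifying the precise value of $\l$ in terms of the environment law is then a matter of matching the tail constant in the Kesten renewal estimate, which I would expect to be routine once the structure above is in place.
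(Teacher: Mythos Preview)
Your overall architecture matches the paper's: point-process convergence of the (suitably defined) trap sizes to $N_{\l,\k}$, continuity of a truncated version $\bar H_\e$ of $\bar H$, and a negligibility estimate letting $\e\to 0$; the paper does exactly Steps~(2)--(4) in its Sections~5 and~6, and your identification of the discontinuity of $\bar H$ as the central obstacle is on target.

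The main structural difference is your choice of increments. You decompose $T_n=\sum_{i=1}^n(T_i-T_{i-1})$ by single sites, while the paper works with the \emph{ladder locations} $\nu_i$ and the block crossings $T_{\nu_i}-T_{\nu_{i-1}}$, with trap sizes $\beta_i=E_\w^{\nu_{i-1}}T_{\nu_i}$. This is not merely cosmetic: your claim that ``the $\beta_{i,n}$ are built from the i.i.d.\ environment in a finite-range way'' is false for either $E_\w(T_i-T_{i-1})=1+2W_{i-1}$ or its variance, since $W_{i-1}=\sum_{k\le i-1}\Pi_{k,i-1}$ depends on the entire past of the environment. The paper's ladder blocks are not i.i.d.\ either (precisely because of this $W$-term), but the block structure makes the dependence transparent: one writes $\beta_i=A_iZ_i+Y_i$ with $(Y_i,Z_i)$ i.i.d.\ and $A_i=W_{\nu_{i-1}-1}$ carrying the long-range part, then truncates $A_i$ at depth $\sqrt n$ to obtain a genuinely $\sqrt n$-dependent proxy $\beta_i^{(n)}$ (Lemma~5.3). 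Your single-step decomposition could in principle be made to work by the same truncation device, but you would be reproving these estimates without the i.i.d.\ block scaffolding, and the exponential-approximation step (your Step~(1)) would also need to be redone, since the cited result from \cite{p1LSL2} is stated for ladder-block crossings, not single-step crossings. The paper's coupling (Lemma~4.5 and the surrounding variance computation) is already quite delicate for blocks.

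Two smaller omissions: the paper first passes from $T_n$ to $T_{\nu_{\lfloor n/\bar\nu\rfloor}}$ (Lemma~4.4), which you skip by working with single steps; and there is a separate coupling step (Lemma~4.3) transferring the limit from the measure $Q$ (under which the ladder shifts are stationary) back to $P$. Neither is deep, but both are needed to close the argument.
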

\begin{rem}
 The Gaussian and centered exponential distributions that were shown in \cite{p1LSL2} to be subsequential quenched limiting distributions of the hitting times are both, clearly, in the support of the random limiting probability measure
obtained in Theorem \ref{wqlTn}. 
Indeed, letting $\zeta_k = k \delta_{k^{-1/2}} \in \mathcal{M}_p$ we see that
$\bar{H}(\zeta_1)$ is a centered exponential distribution, and the central limit theorem implies that $\lim_{k\ra\infty} \bar{H}(\zeta_k)$ is a standard Gaussian distribution. 
\end{rem}
\begin{rem} \label{r:poisson.rep}
One can represent the non-homogeneous Poisson process $N_{\l,\k}$ as
$$
N_{\l,\k}
= \sum_{j=1}^\infty \delta_{ (\l/\k)^{1/\k} \Gamma_j^{-1/\k}}\,,
$$
where $(\Gamma_j)_{j\geq 1}$ is the increasing sequence of the points
of the unit rate homogeneous Poisson process on $(0,\infty)$. In
particular, the points of $N_{\l,\k}$ are
square summable with probability 1 if  $\k<2$ (and square summable
with probability 0 if  $\k\geq 2$.) Furthermore, the random limiting
distribution in Theorem \ref{wqlTn} can be written in the form 
\begin{equation} \label{e:repr.lim}
\bar H(N_{\l,\k})(\cdot) = \Pv\Bigl( (\l/\k)^{1/\k} \sum_{j=1}^\infty
\Gamma_j^{-1/\k} (\tau_j-1)\in\cdot\Bigr)\,,
\end{equation}
and we recall that the probability in \eqref{e:repr.lim} is taken with
respect to the exponential random variables $(\tau_j)$, while keeping
the standard Poisson arrivals $(\Gamma_j)$ fixed. 

The random probability measure $L=H(N_{\l,\k})$ above has a curious
stability property in $\mathcal{M}_1$: if $L_1,\ldots, L_n$ are
i.i.d. copies of $L$, then 
\be \label{e:stab}
L_1\ast \ldots \ast L_n (\cdot)\eil L\bigl(\cdot/n^{1/\k}\bigr)
\ee
for $n=1,2,\ldots$. To see why this is true, represent each $L_i$ as
in \eqref{e:repr.lim}, but using an independent sequence of Poisson
arrivals for each $i=1,\ldots, n$. Then the $n$-fold convolution
$L_1\ast \ldots \ast L_n$ has the same representation, but the
sequence of the  standard Poisson arrivals has to be replaced by a
superposition of $n$ such independent sequences. Since a superposition
of independent Poisson processes is, once again, a Poisson process and
the mean measures add up, we conclude that 
$$
L_1\ast \ldots \ast L_n (\cdot)\eil \Pv\Bigl( (\l/\k)^{1/\k} \sum_{j=1}^\infty
\tilde\Gamma_j^{-1/\k} (\tau_j-1)\in\cdot\Bigr)\,,
$$
where $(\tilde\Gamma_j)_j$ is the increasing sequence of the points
of a homogeneous Poisson random measure on $(0,\infty)$ with intensity
$n$. Since the sequence $(\Gamma_j/n)_j$ also forms a Poisson random
measure with intensity  $n$, \eqref{e:stab} follows. 
\end{rem}

Since we know that when $\kappa<2$ there is no centering and scaling that results in convergence to a deterministic distribution, 
we have some flexibility in choosing what
centering and scaling to work with. For example, if we use the averaged
centering and scaling in Theorem \ref{kksthm}, then a 
slightly different random probability distribution will appear in the 
limit. Before stating this result we need to introduce some more
notation. Define mappings  $H,H_\e:\mathcal{M}_p \ra
\mathcal{M}_1$, $\e>0$, as follows. For $\zeta = \sum_{i\geq 1} \d_{x_i}$, 
$H(\zeta)$ and $H_\e(\zeta)$ are the probability measures defined by 
\be\label{Hdef}
 H(\zeta)(\cdot) = 
\begin{cases}
 \Pv \left( \sum_{i\geq 1} x_i \tau_i \in \cdot \right) & \text{if } \sum_{i\geq 1} x_i < \infty \\
 \d_0 & \sum_{i\geq 1} x_i = \infty. 
\end{cases}
\ee
and 
\be\label{Hedef}
 H_\e(\zeta)(\cdot) = \Pv \left( \sum_{i\geq 1} x_i \tau_i \ind{x_i > \e} \in \cdot \right).
\ee
As was the case in the definition of $\bar{H}$ in \eqref{barHdef}, the
definition of $H(\zeta)$ does not depend on a particular enumeration
of the points of $\zeta$. Furthermore, an obvious modification of the
proof of Lemma \ref{l:meas} shows that the map $H$ is measurable. The
  maps $H_\e$ are even (almost) continuous, as will be seen in Section \ref{averagedcentering}.

\begin{thm}\label{wqlTnA}
Let Assumptions \ref{iidasm} - \ref{kasm} hold. For $\l,\k>0$ let
$N_{\l,\k}$  be a non-homogeneous Poisson point process on
$(0,\infty)$ with intensity $ \l x^{-\k-1}$. Then for every
$\k\in(0,2)$ there is a $\l>0$ such that the following statements hold. 
\begin{enumerate}
 \item If $\kappa \in(0,1)$, then 
\[
 \mu_{n,\w}(\cdot) = P_\w\left( \frac{T_n}{n^{1/\k}} \in \cdot \right) \Lra H(N_{\l,\k}).
\]
 \item If $\kappa = 1$, then 
\[
 \mu_{n,\w}(\cdot) = P_\w\left( \frac{T_n-n D(n)}{n} \in \cdot \right)
 \Lra \lim_{\e\ra 0^+} \bigl[ H_\e(N_{\l,1})*\d_{-c_{\l,1}(\e)}\bigr], 
\] 
where $c_{\l,1}(\e) =
\int_{\e}^1 \l x^{-1} \, dx= \l \log(1/\e)$, and $D(n)$ is a sequence such that $D(n) \sim A \log n$ for some $A>0$. 
 \item If $\kappa \in (1,2)$, then 
\[
 \mu_{n,\w}(\cdot) = P_\w\left( \frac{T_n - n/\vp}{n^{1/\kappa}} \in
 \cdot \right) \Lra \lim_{\e\ra 0^+} \bigl[
 H_\e(N_{\l,\k})*\d_{-c_{\l, \k}(\e)}\bigr], 
\]
where $c_{\l,\k}(\e) = \int_{\e}^\infty \l  x^{-\k} \, dx =
\frac{\l}{\k-1}  \e^{-(\k -1)}$.  
\end{enumerate}
\end{thm}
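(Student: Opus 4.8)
The plan is to deduce Theorem~\ref{wqlTnA} from Theorem~\ref{wqlTn} by accounting for the effect of replacing the quenched centering $E_\w T_n$ by the deterministic centering $a_n$ (with $a_n=0$, $a_n=nD(n)$, and $a_n=n/\vp$ in cases (1), (2), (3) respectively). Setting $\gamma_{n,\w}:=n^{-1/\k}(E_\w T_n-a_n)$, one has the exact identity $\mu_{n,\w}=\bar\mu_{n,\w}*\d_{\gamma_{n,\w}}$, so since $(\mu,c)\mapsto\mu*\d_c$ is continuous from $\mathcal M_1\times\R$ to $\mathcal M_1$, everything reduces to the joint weak behaviour of $(\bar\mu_{n,\w},\gamma_{n,\w})$. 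To get at this I would reopen the proof of Theorem~\ref{wqlTn} and isolate the point-process convergence on which it rests: if $(\beta_i)$ are the (un-normalized, i.i.d.) quenched expected crossing times of the ladder blocks of $[0,n]$ and $\zeta_{n,\w}=\sum_i\d_{\beta_i/n^{1/\k}}$, then $\zeta_{n,\w}\Lra N_{\l,\k}$, and, uniformly in $n$ with high $P$-probability, $\bar\mu_{n,\w}$ is close in the Prohorov metric to $\bar H(\zeta_{n,\w})=\Pv(\sum_i (\beta_i/n^{1/\k})(\tau_i-1)\in\cdot)$, while $n^{-1/\k}E_\w T_n$ is close to the total mass $\|\zeta_{n,\w}\|=n^{-1/\k}\sum_i\beta_i$. (If these facts are not already isolated in this form I would prove them as a preliminary lemma, using Kesten's renewal theorem for the tail $P(\beta_1>t)\sim Ct^{-\k}$, the fact from \cite{p1LSL2} that a large block is crossed in an approximately exponential time, and the i.i.d.\ block structure for the Poisson limit.)

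For $\k\in(0,1)$ the argument closes with no truncation. The total mass is $P$-a.s.\ finite on $N_{\l,\k}$ (its points are $\l^{1/\k}\Gamma_j^{-1/\k}$ and $\sum_j\Gamma_j^{-1/\k}<\infty$), and the bounds $E_P[\beta_1\ind{\beta_1\le t}]=O(t^{1-\k})$, $E_P[\beta_1^2\ind{\beta_1\le t}]=O(t^{2-\k})$ show that the family $(\|\zeta_{n,\w}\|)_n$ is uniformly integrable under $P$ and that discarding the points $\le\e$ changes $\|\zeta_{n,\w}\|$ by $o_\e(1)$ uniformly in $n$. Hence $(\bar\mu_{n,\w},n^{-1/\k}E_\w T_n)\Lra(\bar H(N_{\l,\k}),\|N_{\l,\k}\|)$ jointly, and the continuous convolution map together with the elementary identity $\bar H(\zeta)*\d_{\|\zeta\|}=H(\zeta)$, valid when $\|\zeta\|<\infty$ because $\sum_i x_i(\tau_i-1)+\sum_i x_i=\sum_i x_i\tau_i$, yields $\mu_{n,\w}\Lra H(N_{\l,\k})$.

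For $\k\in[1,2)$ the total mass is infinite and one must truncate. Fix $\e>0$ and split $\zeta_{n,\w}$ at level $\e$. Since $H_\e$ and $\zeta\mapsto\sum_{x_i>\e}x_i$ are continuous at configurations with no atom at $\e$, hence $P$-a.s.\ continuous for $N_{\l,\k}$, one gets $H_\e(\zeta_{n,\w})\Lra H_\e(N_{\l,\k})$ and, jointly, $n^{-1/\k}\sum_{\beta_i>\e n^{1/\k}}\beta_i\to\sum_{x_i>\e}x_i(N_{\l,\k})$. For the blocks below $\e$ the tail estimate (together with the renewal fact that the number of blocks in $[0,n]$ is $n/E_P[\text{block length}]+o(n)$) gives
$E_P\bigl[n^{-1/\k}\sum_{\beta_i\le\e n^{1/\k}}\beta_i\bigr]=n^{-1/\k}a_n-c_{\l,\k}(\e)+o(1)$ --- this is exactly where the deterministic centering $a_n$, and the logarithm in $D(n)$ when $\k=1$, is matched, and the constant $\l$ emerging here is forced to equal the one in $N_{\l,\k}$ because both come from the same tail constant $C$ --- while $\Var_P\bigl(n^{-1/\k}\sum_{\beta_i\le\e n^{1/\k}}\beta_i\bigr)=O(\e^{2-\k})$ uniformly in $n$, which is where $\k<2$ enters. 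Therefore $\gamma_{n,\w}=n^{-1/\k}\sum_{\beta_i>\e n^{1/\k}}\beta_i-c_{\l,\k}(\e)+R_{n,\e}$ with $\sup_n P(|R_{n,\e}|>\delta)\to0$ as $\e\to0$ for every $\delta>0$, and $\bar\mu_{n,\w}*\d_{n^{-1/\k}\sum_{\beta_i>\e n^{1/\k}}\beta_i}$ equals $H_\e(\zeta_{n,\w})$ convolved with the centered measure $\Pv\bigl(n^{-1/\k}\sum_{\beta_i\le\e n^{1/\k}}\beta_i(\tau_i-1)\in\cdot\bigr)$, whose variance $n^{-2/\k}\sum_{\beta_i\le\e n^{1/\k}}\beta_i^2$ has $P$-mean $O(\e^{2-\k})$. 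Combining, $\mu_{n,\w}$ equals $H_\e(\zeta_{n,\w})*\d_{-c_{\l,\k}(\e)}$ up to a perturbation that is $o_\e(1)$ in probability uniformly in $n$; since $H_\e(\zeta_{n,\w})*\d_{-c_{\l,\k}(\e)}\Lra H_\e(N_{\l,\k})*\d_{-c_{\l,\k}(\e)}$ for each fixed $\e$, a converging-together argument gives $\mu_{n,\w}\Lra\lim_{\e\to0^+}[H_\e(N_{\l,\k})*\d_{-c_{\l,\k}(\e)}]$, provided this last limit exists. It does: writing $N_{\l,\k}=\sum_j\d_{\l^{1/\k}\Gamma_j^{-1/\k}}$, the random variable $\sum_{x_i>\e}x_i(\tau_i-1)+\bigl(\sum_{x_i>\e}x_i-c_{\l,\k}(\e)\bigr)$ converges a.s.\ as $\e\to0^+$ because $\sum_i x_i^2<\infty$ and $\int_0^1 x^2\,x^{-\k-1}\,dx<\infty$ (both for $\k<2$), so $H_\e(N_{\l,\k})*\d_{-c_{\l,\k}(\e)}$ converges in $\mathcal M_1$. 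The same recipe also applies when $\k\in(0,1)$, where $c_{\l,\k}(\e)=\tfrac{\l}{1-\k}\e^{1-\k}\to0$ and $\|N_{\l,\k}\|<\infty$ make the $\e\to0^+$ limit equal $H(N_{\l,\k})$, reconciling the three cases.

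The step I expect to be the main obstacle is the uniform-in-$n$ control of the below-$\e$ block contribution together with the exact identification of the compensating constant --- i.e.\ proving $E_P[n^{-1/\k}\sum_{\beta_i\le\e n^{1/\k}}\beta_i]=n^{-1/\k}a_n-c_{\l,\k}(\e)+o(1)$ with precisely the $c_{\l,\k}(\e)$ appearing in the statement (and its logarithmic analogue when $\k=1$), and the attendant uniform variance bound. This demands sharp tail asymptotics for the block crossing times and a careful treatment of the residual dependence between blocks, and of the randomness of the number of blocks in $[0,n]$, that survives conditioning on the environment; this bookkeeping, rather than any conceptual difficulty, is where the real work lies.
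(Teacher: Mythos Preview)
Your approach is different from the paper's and largely viable, but one of your standing hypotheses is wrong and would derail the key estimate.

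\textbf{How the paper proceeds.} The paper does \emph{not} deduce Theorem~\ref{wqlTnA} from Theorem~\ref{wqlTn}. Instead, both theorems are proved in parallel via separate reduction propositions (Propositions~\ref{bigtransfer} and~\ref{bigtransferA}), each of which reduces the respective $\bar\mu_{n,\w}$ or $\mu_{n,\w}$ to an explicit exponential-mixture law $\bar\sigma_{n,\w}$ or $\sigma_{n,\w}$ built from the $\beta_i$. The averaged-centering case is then handled directly for $\sigma_{n,\w}$: for $\k<1$ the argument mirrors the quenched-centering case; for $\k\in[1,2)$ one truncates at level $\e$, uses continuity of $H_\e$ and the point-process convergence to get $H_\e(N_{n,\w})*\delta_{-c_n(\e)}\Lra H_\e(N_{\l,\k})*\delta_{-c_{\l,\k}(\e)}$, and then invokes a dedicated lemma (Lemma~\ref{ble}) to control the below-$\e$ part uniformly in $n$. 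Your convolution identity $\mu_{n,\w}=\bar\mu_{n,\w}*\delta_{\gamma_{n,\w}}$ and the subsequent truncation-and-converging-together scheme would arrive at exactly the same technical core, so the two routes are more similar than they first appear; the paper's route has the advantage that the coupling work (passing from $T_n$ under $P$ to $\sum\beta_i\tau_i$ under $Q$) is done once and for all in Section~\ref{Transfer}, rather than being redone inside a joint-convergence argument.

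\textbf{The gap.} You call the $\beta_i$ ``i.i.d.'' and base your variance bound $\Var_P\bigl(n^{-1/\k}\sum_{\beta_i\le\e n^{1/\k}}\beta_i\bigr)=O(\e^{2-\k})$ on that. They are not i.i.d.: under $Q$ the decomposition $\beta_i=A_iZ_i+Y_i$ has $(Y_i,Z_i)$ i.i.d.\ but $A_i=W_{\nu_{i-1}-1}$ depends on the environment to the left of $\nu_{i-1}$, so consecutive $\beta_i$ are genuinely correlated. The paper's Lemma~\ref{ble} is precisely the statement you need, and its proof requires an explicit covariance estimate showing $\Cov_Q(\beta_1\wedge\e n^{1/\k},\beta_{k+1}\wedge\e n^{1/\k})\le C\e^{2-\k}n^{2/\k-1}\rho^k$ for some $\rho<1$; this in turn rests on writing $\beta_{k+1}=\tilde\beta_{k+1}+2W_{\nu_1-1}\Pi_{\nu_1,\nu_k-1}R_{\nu_k,\nu_{k+1}-1}$ with $\tilde\beta_{k+1}$ independent of $\beta_1$ and exploiting $E_Q[\Pi_{0,\nu-1}]<1$. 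Without this, your uniform-in-$n$ variance control for $\k\in[1,2)$ does not go through. You do flag ``residual dependence between blocks'' at the end, but it is not residual --- it is the entire difficulty in that step, and the same dependence is why the point-process convergence $\zeta_{n,\w}\Lra N_{\l,\k}$ (your starting point) is itself a nontrivial result (Proposition~\ref{PPconvergence}) rather than a citation of the standard i.i.d.\ heavy-tailed Poisson limit.
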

\begin{rem}
The limits as $\e\to 0^+$ in the cases $1\leq \k<2$ in Theorem
 \ref{wqlTnA} are weak limits in $\mathcal{M}_1$. The fact that these
 limits exist is standard; see
 e.g. \cite{samorodnitsky:taqqu:1994}. As we show in Section \ref{averagedcentering}, fixing a Poisson process
 $N_{\l,\k}$ on some probability space (for example, as in Remark
 \ref{r:poisson.rep}), even convergence with probability 1 holds.

The limiting random probability measures obtained in the different
parts of Theorem \ref{wqlTnA} also have stability properties in
$\mathcal{M}_1$, similar to the stability property of $\bar{H}(N_{\l,\k})$
described in Remark \ref{r:poisson.rep}. Specifically, if
$L_1,L_2,\ldots, L_n$ are i.i.d. copies of  the  
limiting random probability measure $L$ in Theorem \ref{wqlTnA}, then
the stability relation for the convolution operation \eqref{e:stab}
still holds if $\k\not=1$. In the case $\k=1$, the corresponding
stability relation is
\be\label{e:stab2}
L_1\ast \ldots \ast L_n (\cdot)\eil L\bigl(\cdot/n-\l\log n\bigr)\,.
\ee
The proof is similar to the argument used in Remark
\ref{r:poisson.rep}. We omit the details. 
\end{rem}
The statement (and proof) of the weak quenched limits with the
quenched centering (Theorem \ref{wqlTn}) is much simpler than the
corresponding result with the averaged centering (Theorem
\ref{wqlTnA}). However, in transferring a limiting distribution from
the hitting times $T_n$ to the location of the random walk $X_n$ it is
easier to use the averaged centering. 

\begin{cor}\label{wqlXn}
Let Assumptions \ref{iidasm} -- \ref{kasm} hold for some $\k\in(0,2)$,
and let $\l>0$ be given by Theorem \ref{wqlTnA}. 
\begin{enumerate}
 \item\label{CorCase1} If $\kappa \in(0,1)$, then for any $x\in\R$, 
\[
 P_\w\left( \frac{X_n}{n^{\k}} < x \right) \Lra
 H(N_{\l,\k})(x^{-1/\k},\infty). 
\]
\item\label{CorCase2} If $\kappa = 1$, then there exists a sequence $\d(n) \sim n/(A \log n)$ (with $A>0$ as in the conclusion of Theorem \ref{wqlTnA}) such that for any $x\in\R$,
\[
 P_\w\left( \frac{X_n-\d(n)}{n/(\log n)^2} < x \right) \Lra \lim_{\e\ra 0^+} \left(H_\e(N_{\l,1}) * \d_{-c_{\l,1}(\e)} \right) (-A^2 x, \infty).
\] 
 \item\label{CorCase3} If $\kappa \in (1,2)$, then for any $x\in\R$,
\[
 P_\w\left( \frac{X_n - n \vp}{n^{1/\kappa}} < x \right) \Lra
 \lim_{\e\ra 0^+} \left(H_\e(N_{\l,\k}) * \d_{-c_{\l,\k}(\e)} \right)(-x \vp^{-1-1/\k}, \infty).
\]
\end{enumerate}
\end{cor}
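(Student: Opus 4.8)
The plan is to transfer Theorem~\ref{wqlTnA} from the hitting times to the position $X_n$ by means of the classical duality between $X_n$ and the hitting times, combined with a continuous-mapping argument for the evaluation functional $\pi\mapsto\pi\bigl((t,\infty)\bigr)$ on $\mathcal{M}_1$. Throughout, write $a_m,d_m$ for the (deterministic) centering and scaling attached to the hitting times in the relevant part of Theorem~\ref{wqlTnA}, and $L$ for the corresponding limiting random probability measure, so that $\mu_{m,\w}(\cdot)=P_\w\bigl((T_m-a_m)/d_m\in\cdot\bigr)\Lra L$ as $m\to\infty$; the constant $\l$ is the one produced by Theorem~\ref{wqlTnA} and needs no separate treatment.

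First I would record an elementary two-sided bound relating the event $\{X_n<m\}$ to hitting times. Since the nearest-neighbour walk must pass through level $m$ before exceeding it, $\{T_m>n\}\subseteq\{X_n<m\}$. Conversely, splitting according to whether $T_{m+\ell}\le n$ and applying the strong Markov property at $T_{m+\ell}$ gives
\[
 P_\w(T_m>n)\ \le\ P_\w(X_n<m)\ \le\ P_\w\bigl(T_{m+\ell}>n\bigr)+P_\w^{\,m+\ell}(T_m<\infty),\qquad \ell\in\N.
\]
Taking $P$-expectations, $E_P\bigl[P_\w^{\,m+\ell}(T_m<\infty)\bigr]=\P(T_{-\ell}<\infty)\to0$ as $\ell\to\infty$ by transience, so with $\ell=\ell(n):=\lceil\log n\rceil$ the backtracking correction tends to $0$ in $P$-probability, uniformly in $m$; at the same time $\ell(n)$ is negligible compared with the spatial scale in each of the three regimes. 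It therefore suffices to identify the weak limit of $P_\w(T_{m(n)}>n)$, and to check that $P_\w(T_{m(n)+\ell(n)}>n)$ has the same limit, for a suitable choice of the level $m(n)$.

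Next, in each regime I would choose $m(n)$ so that, after the centering and scaling of Theorem~\ref{wqlTnA}, the event $\{X_n<m(n)\}$ becomes $\bigl\{(T_{m(n)}-a_{m(n)})/d_{m(n)}>t_n\bigr\}$ with a deterministic threshold $t_n$ converging to the value appearing in the statement. For $\k\in(0,1)$ (it suffices to treat $x>0$) take $m(n)=\lceil xn^\k\rceil$, so that $t_n=n/m(n)^{1/\k}\to x^{-1/\k}$. For $\k\in(1,2)$ take $m(n)=\lceil n\vp+xn^{1/\k}\rceil$; since $n^{1/\k}=o(n)$ one gets $t_n=(n-m(n)/\vp)/m(n)^{1/\k}\to -x\,\vp^{-1-1/\k}$. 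For $\k=1$, let $\d(n)$ be an inverse of $k\mapsto kD(k)$ (so $\d(n)\sim n/(A\log n)$) and take $m(n)=\lceil\d(n)+xn/(\log n)^2\rceil$; a short expansion using $D(k)\sim A\log k$ then yields $t_n=(n-m(n)D(m(n)))/m(n)\to -A^2x$. In each case $\ell(n)=\lceil\log n\rceil$ is small enough on the relevant scale that replacing $m(n)$ by $m(n)+\ell(n)$ does not change the limiting threshold $t_\infty$.

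Finally I would apply the continuous mapping theorem. By Theorem~\ref{wqlTnA}, $\mu_{m(n),\w}\Lra L$, and $L$ is almost surely non-atomic: given the Poisson points, $L$ is the law of a convergent sum of independent scaled (shifted) exponential random variables, the first of which already has a density, and the same remark covers the $\e\to0^+$ limits in parts~(2)--(3). Hence $\pi\mapsto\pi\bigl((t,\infty)\bigr)$ is almost surely continuous at $L$, and a routine argument via the Skorokhod representation theorem (using $t_n\to t_\infty$ together with the almost sure continuity of $L$ at $t_\infty$) gives $P_\w(T_{m(n)}>n)=\mu_{m(n),\w}\bigl((t_n,\infty)\bigr)\Lra L\bigl((t_\infty,\infty)\bigr)$, and likewise with $m(n)$ replaced by $m(n)+\ell(n)$. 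Since the lower bound and the main part of the upper bound both converge in distribution to $L\bigl((t_\infty,\infty)\bigr)$ while the remaining backtracking term is $o_P(1)$, Slutsky gives that the upper bound converges to the same limit; as both bounds are uniformly bounded and their difference is nonnegative, bounded convergence forces that difference to $0$ in $L^1(P)$, and a sandwich yields $P_\w(X_n<m(n))\Lra L\bigl((t_\infty,\infty)\bigr)$, which is the assertion of the corollary in all three cases. I expect the principal obstacles to be the bookkeeping in the case $\k=1$ --- defining $\d(n)$ as the correct inverse and verifying $t_n\to -A^2x$ with the slowly varying factor $D$ --- and the (routine but not automatic) verification of the non-atomicity of the limiting measures and of the legitimacy of the moving-threshold continuous-mapping step; the duality bound and the transience estimate on the backtracking term are comparatively soft.
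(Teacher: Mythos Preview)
Your proposal is correct and follows essentially the same route as the paper. The only cosmetic difference is in how the duality is packaged: the paper introduces the running maximum $X_n^*=\max\{m:T_m\le n\}$, uses the exact identity $\{X_n^*<m\}=\{T_m>n\}$, and then closes the gap between $X_n^*$ and $X_n$ via the almost sure bound $\limsup_n (X_n^*-X_n)/\log n<\infty$; you instead sandwich $P_\w(X_n<m)$ directly between $P_\w(T_m>n)$ and $P_\w(T_{m+\ell}>n)+P_\w^{\,m+\ell}(T_m<\infty)$ with $\ell=\lceil\log n\rceil$. Both approaches use the same exponential bound on $\P(T_{-\ell}<\infty)$, the same choices of $m(n)$ in each regime, the same non-atomicity argument for $L$, and the same continuous-mapping step for the evaluation functional with moving threshold (the paper states this as a separate corollary). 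Your $L^1$ sandwich at the end is fine: since the bounds take values in $[0,1]$ and both converge in law to the same limit, their means converge to the same number, so the nonnegative difference goes to $0$ in $L^1$.
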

\begin{rem}
 The type of convergence in Corollary \ref{wqlXn} is weaker than that in Theorems \ref{wqlTn} and \ref{wqlTnA}. Instead of proving that the quenched distribution of $X_n$ (centered and scaled) converges in distribution on the space $\mathcal{M}_1$, we only prove that certain projections of the quenched law converge in distribution as real valued random variables. We suspect that, with some extra work, the techniques of this paper could be used to prove a limiting distribution for the full quenched distribution of $X_n$, but we will leave that for a future paper. Some results in this direction have previously been obtained in \cite{eszAging}
\end{rem}
\begin{rem}
 Theorem \ref{wqlTnA} and Corollary \ref{wqlXn} generalize the stable limiting distributions under the averaged law \cite{kksStable}. For instance, when $\kappa \in (0,1)$, 
\[
 \P\left( \frac{T_n}{n^{1/\k}} \leq x \right) = E_P\left[ P_\w \left( \frac{T_n}{n^{1/\k}} \leq x \right) \right] \limn \Ev [ H(N_{\l,\k})(-\infty,x]],
\]
and it is easy to see that $\Ev [H(N_{\l,\k})(-\infty,x]] = L_{\k,b}(x)$ for some $b>0$. 
\end{rem}

The structure of the paper is as follows. In Section \ref{bg} we introduce some notation and review some basic facts that we will need. Then, in Section \ref{gmsection} we outline a general method for transferring a limiting distribution result for one sequence of random probability measures to another sequence of random probability measures by constructing a coupling between the two sequences.
The method developed in Section \ref{gmsection} is then implemented several times in Section \ref{Transfer} to reduce the study of the quenched distribution of the hitting times $T_n$ to the quenched distribution of a certain environment-dependent mixture of exponential random variables. 
Then, these environment-dependent mixing coefficients are shown in Section \ref{betaanalysis} to be related to a non-homogeneous Poisson point process $N_{\l,\k}$. 
In Section \ref{quenchedcentering} we complete the proof of Theorem \ref{wqlTn} by proving a weak quenched limiting distribution for this mixture of exponentials. The proof of Theorem \ref{wqlTnA} is similar to the proof of Theorem \ref{wqlTn}, and in Section \ref{averagedcentering} we indicate how to complete the parts of the proof that are different. Finally, in Section \ref{Tn2Xn} we give the proof of the Corollary \ref{wqlXn}. 

Before turning to the proofs, we make one remark on the writing style. Throughout the paper, we will use $c$, $C$, and $C'$ to denote generic constants that may change from line to line. Specific constants that remain fixed throughout the paper are denoted $C_0$, $C_1$, etc.

\begin{rem}
 Soon after this work had been completed and posted on the arXiv, two other papers \cite{dgWQL,estzWQL} appeared giving independent proofs of some of the main results of this paper. A few brief remarks are in order on the differences between these papers. 
Neither of the above papers state weak quenched limits with the averaged centering as in Theorem \ref{wqlTnA} (although this should follow easily from Corollary 1 in \cite{dgWQL}) nor do they discuss the quenched distribution of $X_n$ as in Corollary \ref{wqlXn}. 
In \cite{dgWQL}, instead of studying the hitting times $T_n$ directly the authors study the amount of time spent in the interval $[0,n)$ - which can easily be seen to have the same weak quenched limiting distributions as the hitting times. In \cite{estzWQL}, the authors prove Theorem \ref{wqlTn} under the stronger Wasserstein $W^1$ metric on the space $\mathcal{M}_1$. However, an analysis of the proof in the current paper (especially the coupling technique introduced in Section \ref{gmsection}) reveals that it should be easily adaptable to the Wasserstein metric as well. 
\end{rem}

\begin{rem}
 After the initial submission of this paper, we also became aware of \cite{stRPD} which provides a systematic study of stable random probability distributions - that is, random probability distributions with stability properties like \eqref{e:stab} or \eqref{e:stab2}. Moreover, in \cite{stRPD} the authors study a simpler model of random motion in a random environment and obtain weak quenched limiting distributions for the hitting times similar to Theorems \ref{wqlTn} and \ref{wqlTnA}. 
\end{rem}


\section{Background}\label{bg}
In this section we introduce some notation that will be used throughout the rest of the paper. 
For RWRE on $\Z$, many quenched probabilities and expectations are
explicitly solvable in terms of the environment. It is in order to
express these formulas compactly that we need this additional notation. 
Recall that $\rho_x = (1-\w_x)/\w_x$, $x\in\Z$.  Then, for $i\leq j$
we let 
\be\label{pirwdef}
 \Pi_{i,j} = \prod_{x=i}^j \rho_x, \quad R_{i,j} = \sum_{k=i}^j \Pi_{i,k}, \quad\text{and}\quad W_{i,j} = \sum_{k=i}^j \Pi_{k,j}. 
\ee
Denote  
\be\label{rwdef}
 R_i = \lim_{j\ra\infty} R_{i,j} = \sum_{k=i}^\infty \Pi_{i,k} \quad\text{and}\quad W_j = \lim_{i\ra-\infty} W_{i,j} = \sum_{k=-\infty}^j \Pi_{k,j}. 
\ee
Note that Assumption \ref{tasm} implies that $R_i$ and $W_j$ are
finite with probability 1 for all $i,j\in\Z$. 
The following formulas are extremely useful (see \cite{zRWRE} for a reference)
\be\label{HPform}
 P_\w^x \left( T_i > T_j \right) = \frac{R_{i,x-1}}{R_{i,j-1}} \quad
 \text{and} \quad  P_\w^x \left( T_i < T_j \right) =
 \frac{\Pi_{i,x-1}R_{x,j-1}}{R_{i,j-1}}, \ \ i < x <
 j,   
\ee
\be\label{QETform}
 E_\w^{i} T_{i+1} = 1 + 2 W_i, \ \  i\in \Z. 
\ee

As in \cite{pzSL1,p1LSL2}, we define the ``ladder locations''
$\nu_i$ of the environment by 
\begin{align}
\nu_0 = 0, \quad\text{and}\quad \nu_i =
\inf\{n > \nu_{i-1} : \,  \Pi_{\nu_{i-1},n-1} < 1\}, \quad  i \geq 1.
\label{nudef}
\end{align}
Since the environment is i.i.d., the sections of the environment
$\{\w_x : \, \nu_{i-1} \leq x < \nu_i \}$ between successive ladder
locations are also i.i.d.  However, the environment directly to the
left of $\nu_0 = 0$ is different from the environment to the left of
$\nu_i$ for $i>1$. Thus, as in \cite{pzSL1,p1LSL2} it is convenient to
define a new probability law on environments by
\be\label{Qdef}
 Q(\cdot) = P\left( \cdot\, | \, \Pi_{i,-1}< 1, \text{ all } i\leq -1
 \right);  
\ee
by Assumption \ref{tasm} the condition is an event of positive
probability. 

Two facts about the distribution $Q$ will be important to keep in mind throughout the remainder of the paper. 
\begin{itemize}
 \item Under the measure $Q$ the environments stationary under shifts by the ladder locations $\nu_i$. 
 \item Since, under $P$, the environment is i.i.d., the measure $Q$ 
   coincides with the measure $P$ on $\s(\w_x : \, x\geq 0)$. 
\end{itemize}

Often for convenience we will denote $\nu_1$ by $\nu$. 
It was shown in \cite[Lemma 2.1]{pzSL1} that the distribution of $\nu$ (which is the same under $P$ and $Q$) has exponential tails. That is, there exist constants $C,C'>0$ such that 
\be\label{nutail}
 P(\nu > x ) = Q(\nu > x) \leq C' e^{-C x}, \  x\geq 0. 
\ee
In particular this implies that $\lim_{n\ra\infty} \nu_n/n = \bar\nu
:= E_Q\nu = E_P\nu$, both $P$ and $Q$ - a.s.. 

In contrast, it was shown in \cite[Theorem 1.4]{pzSL1} that, under
Assumption \ref{kasm}, the distribution of the first hitting time 
 $E_\w T_\nu$ has power tails under the measure $Q$. That is, there
exists a constant $\tc$ such that  
\be\label{ETnutail}
 Q( E_\w T_\nu > x ) \sim \tc x^{-\k}, \  x\ra\infty. 
\ee

\section{A General Method for Transferring Weak Quenched Limits}\label{gmsection}

Our strategy for proving weak quenched limits for the hitting times
will be to first prove a weak quenched limiting distribution for a
related sequence of random variables.  
Then by exhibiting a coupling between the two sequences of random
variables we will be able to conclude that the hitting times have the
same weak quenched limiting distribution. 
The second of these steps is accomplished through the following
lemma. It applies to random probability measures on $\R^2$, which are
simply random variables taking values in $\mathcal{M}_1(\R^2)$. The
latter space is the space of all probability measures on $\R^2$ which
can be turned into a complete, separable metric space in the same way
as it was done to the space $\mathcal{M}_1$ in Section
\ref{sec:intro}. The two maps assigning each probability measure
in $\mathcal{M}_1(\R^2)$ its two marginal probability measures are
automatically continuous. 

\begin{lem}\label{GeneralMethod}
Let $\theta_n, \, n=1,2,\ldots$ be a sequence of random probability
measures on $\R^2$ defined on some probability space $\ProbSpace$. Let
$\gamma_{n}$ and $\gamma_{n}'$ be the two marginals of $\theta_n$,
$n=1,2,\ldots$. Suppose that for every $\delta>0$
\be\label{probcouple}
 \lim_{n\ra\infty} \Pv \left( \theta_n\bigl( \bigl\{ (x,y):\,
 |x-y|\geq\d\bigr\}\bigr) >\d\right) = 0. 
\ee
If $\gamma_{n} \Lra \gamma$ for some $\gamma\in \mathcal{M}_1$,
then $\gamma_{n}' \Lra \gamma$ as well. 
\end{lem}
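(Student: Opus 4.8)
The plan is to show that the Prohorov distance between $\gamma_n'$ and $\gamma_n$ (random variables on $\mathcal{M}_1$) converges to zero in probability, and then invoke the fact that convergence in distribution is unaffected by a perturbation that vanishes in probability in the metric. More precisely, I would first argue the deterministic coupling estimate: if $\theta$ is a probability measure on $\R^2$ with marginals $\gamma,\gamma'$ and $\theta(\{(x,y):|x-y|\ge\delta\})\le\delta$, then $\rho(\gamma,\gamma')\le\delta$, where $\rho$ is the Prohorov metric from \eqref{M1metric}. This is a standard consequence of the characterization of the Prohorov distance via couplings (Strassen's theorem gives one direction, but here the easy direction suffices): for any Borel $A\subseteq\R$, $\gamma'(A)=\theta(\R\times A)\le\theta(A^\delta\times\R)+\theta(\{(x,y):|x-y|\ge\delta, y\in A\})\le\gamma(A^\delta)+\delta$, and symmetrically with the roles of $\gamma$ and $\gamma'$ reversed; hence $\rho(\gamma,\gamma')\le\delta$.

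Applying this pointwise in $\w$ (more precisely, for each realization of $\theta_n$), hypothesis \eqref{probcouple} immediately yields $\Pv(\rho(\gamma_n,\gamma_n')>\delta)\to 0$ for every $\delta>0$; that is, $\rho(\gamma_n,\gamma_n')\to 0$ in probability. Now I would combine this with the assumed convergence in distribution $\gamma_n\Lra\gamma$. Since $\mathcal{M}_1$ equipped with $\rho$ is itself a complete separable metric space, the standard ``converging together'' / Slutsky-type lemma (e.g. \cite[Theorem 3.1]{bCOPM}) applies: if $\gamma_n\Lra\gamma$ and the distance between $\gamma_n$ and $\gamma_n'$ tends to $0$ in probability, then $\gamma_n'\Lra\gamma$. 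This gives the conclusion.

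One technical point worth checking is measurability: one needs $\w\mapsto\rho(\gamma_n(\w),\gamma_n'(\w))$ to be measurable so that the probability in the previous paragraph makes sense, and that $\gamma_n,\gamma_n'$ are genuine $\mathcal{M}_1$-valued random variables. The first follows because the Prohorov metric is continuous on $\mathcal{M}_1\times\mathcal{M}_1$ and the marginal maps $\mathcal{M}_1(\R^2)\to\mathcal{M}_1$ are continuous (as noted in the paragraph preceding the lemma), so the composition with the measurable map $\w\mapsto\theta_n(\w)$ is measurable. The second is immediate from the same continuity of the marginal maps.

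I do not anticipate a serious obstacle here: the lemma is essentially a packaging of the elementary coupling bound for the Prohorov distance together with the converging-together lemma in a Polish space. The only mild subtlety is to state the deterministic inequality $\rho(\gamma,\gamma')\le\delta$ cleanly from $\theta(\{|x-y|\ge\delta\})\le\delta$ and to make sure the set $\{(x,y):|x-y|\ge\delta\}$ is handled on the correct side (using $\ge\delta$ versus the strict $<\e$ in the definition of $A^\e$ forces a harmless adjustment, e.g. working with $\delta'<\delta$ or replacing $\delta$ by $\delta+$ a vanishing amount, which does not affect the limit). Everything else is routine.
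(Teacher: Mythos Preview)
Your proposal is correct and follows essentially the same approach as the paper: establish the deterministic bound $\rho(\gamma_n,\gamma_n')\le\delta$ whenever $\theta_n(\{|x-y|\ge\delta\})\le\delta$, deduce $\rho(\gamma_n,\gamma_n')\to 0$ in probability from \eqref{probcouple}, and then invoke \cite[Theorem 3.1]{bCOPM}. The paper's version is terser (it simply asserts the coupling bound from the definition of the Prohorov metric), while you spell out the one-line inclusion argument and add the measurability check, but the logic is identical.
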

\begin{rem}
 Generally the space $\Omega$ will be the space of environments and
 $\Pv$ will be the measure $Q$ on environments  defined in
 \eqref{Qdef}. However, in one application (Lemma \ref{P2Q} below) we
 will use slightly different spaces and measures and so we need to
 state Lemma \ref{GeneralMethod} in this more general form.  
\end{rem}

\begin{proof}
The definition of the Prohorov metric $\rho$ in \eqref{M1metric}
implies that, if $\theta_n\bigl( \bigl\{ (x,y):\, 
 |x-y|\geq\d\bigr\}\bigr) \leq \d$, then
 $\rho(\gamma_{n},\gamma_{n}') \leq \d$. Therefore, the assumption
 \eqref{probcouple} implies that $\rho(\gamma_{n},\gamma_{n}')\to 0$
 in probability. Now the statement of the lemma follows from Theorem
 3.1 in \cite{bCOPM}. 
\end{proof}

The following is an immediate corollary. 
\begin{cor}\label{GeneralMethodcor}
 Under the setup of Lemma \ref{GeneralMethod}, assume that 
\be\label{meancouple}
\Ev_{\theta_n}|X-Y| \ra 0, \quad\text{ in $\Pv$-probability}
\ee
(here $X$ and $Y$ are the coordinate variables in $\R^2$ and $\Ev_{\theta_n}$ is expectation with respect to the measure $\theta_n$). 
If $\gamma_{n} \Lra \gamma$ for some $\gamma\in \mathcal{M}_1$,
then $\gamma_{n}' \Lra \gamma$ as well. 
\end{cor}
\begin{proof}
The claim follows immediately from Lemma \ref{GeneralMethod} and
Markov's inequality via 
\[
\Pv \left(  \theta_n (|X - Y| \geq \d) \geq \d \right) \leq  \Pv(
\Ev_{\theta_n} |X - Y| \geq \d^2 ). 
\]
\end{proof}

\begin{rem}\label{vcrem}
By the Cauchy-Scwarz inequality, a sufficient condition for
\eqref{meancouple} is 
\be\label{varcouple}
\Ev_{\theta_n} (X - Y ) \ra 0 \quad\text{and}\quad \Varv_{\theta_n} (
X - Y ) \ra 0, \quad\text{ in $\Pv$-probability}. 
\ee
\end{rem}

\section{A Series of Reductions}\label{Transfer}
In this section we repeatedly apply Lemma \ref{GeneralMethod} and
Corollary \ref{GeneralMethodcor} to reduce the problem of finding weak
quenched limits of the hitting times $T_n$ to the problem of finding
weak quenched limits of a simpler sequence of random variables that is
a random mixture of exponential distributions.  

First of all, instead of studying the quenched distributions of the
hitting times, it will be more convenient to study the hitting times
along the random sequence of the ladder locations $\nu_n$. Since by
\eqref{nutail}, the distance between consecutive ladder locations has
exponential tails, and $\nu_n/n \ra \bar\nu = E_P \nu_1$ the quenched
distribution of $T_n$ should be close to the quenched distribution of
$T_{\nu_{\nc n}}$ with $\nc = 1/\bar\nu$ (for ease of notation we will
write $\nu_{\nc n}$ instead of $\nu_{\fl{\nc n}}$). Based on this, we will
reduce our problem to proving a quenched weak limit theorem for
$T_{\nu_n} = \sum_{i=1}^n (T_{\nu_i} - T_{\nu_{i-1}})$.  Secondly, as
mentioned in the introduction, the proof of the non-existence of
quenched limiting distributions for hitting times in \cite{p1LSL2}
hinged on two observations. The first of these says that, for large
$n$, the magnitude of $T_{\nu_n}$ is mainly determined by the
increments $T_{\nu_i} - T_{\nu_{i-1}}$ for those $i=1,\ldots, n$ for
which there is a large ``trap'' between the ladder
locations $\nu_{i-1}$ and $\nu_i$. The second observation is that,
when there is a 
large ``trap'' between $\nu_{i-1}$ and $\nu_i$, the time to cross from
$\nu_{i-1}$ to $\nu_i$ is, approximately, an exponential random
variable with a large mean. That is, $T_{\nu_i} - T_{\nu_{i-1}}$ may
be approximated by $\b_i \tau_i$ where  
\be\label{bdef}
 \b_i = \b_i(\w) = E_\w^{\nu_{i-1}} T_{\nu_i} = E_\w( T_{\nu_i} - T_{\nu_{i-1}} ), 
\ee
and $\tau_i$ is a mean 1 exponential random variable that is
independent of everything else. 

When analyzing the hitting times of the ladder locations $T_{\nu_n}$
the measure $Q$ is more convenient to use than the measure $P$ since,
under $Q$, the environment is stationary under shifts of the
environment by the ladder locations. In particular, $\{ \b_i \}_{i\geq
  1}$ is a stationary sequence under $Q$.  
The main result of this
section is the following proposition. 

\begin{prop}\label{bigtransfer}
 For $\w \in \Omega$, suppose that $P_\w$ is expanded so that there
 exists a sequence $\tau_i$ which, under $P_\w$, is an 
 i.i.d.\ sequence of mean 1 exponential random variables. Let
 $\bar{\s}_{n,\w} \in \mathcal{M}_1$ be defined by 
\be\label{sdef}
 \bar{\s}_{n,\w}(\cdot) = P_\w\left( \frac{1}{n^{1/\k}} \sum_{i=1}^n \b_i (\tau_i - 1) \in \cdot \right),
\ee
where $\b_i=\b_i(\w)$ is given by \eqref{bdef}. 
If $\bar{\s}_{n,\w} \overset{Q}{\Lra} \bar{H}(N_{\l,\k})$ then
$\bar{\mu}_{n,\w} \overset{P}{\Lra} \bar{H}(N_{\l/\bar{\nu},\k})$,
where ${\bar \mu}_{n,\w}$ is defined in \eqref{e:mu.bar}. 
\end{prop}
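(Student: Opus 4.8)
The plan is to apply the transfer machinery of Section~\ref{gmsection} in two successive steps, chaining couplings so that the weak quenched limit of $\bar\s_{n,\w}$ propagates first to the quenched law of $T_{\nu_n}$ (recentered), then to the quenched law of $T_n$ itself. Throughout, the ambient probability space for Lemma~\ref{GeneralMethod} will be the space of environments equipped with $Q$ (for the first step) and then with $P$ (after a change of measure), and the $\theta_n$ will be the joint quenched law of two coupled random variables built on the expanded $P_\w$.

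\emph{Step 1: from $\sum_i \b_i(\tau_i-1)$ to $T_{\nu_n}-E_\w T_{\nu_n}$.} On the expanded space, couple $Y_n := n^{-1/\k}\sum_{i=1}^n \b_i(\tau_i-1)$ with $Z_n := n^{-1/\k}(T_{\nu_n}-E_\w T_{\nu_n})$ by realizing the crossing times $T_{\nu_i}-T_{\nu_{i-1}}$ and the exponentials $\tau_i$ on the same space, using the same $\tau_i$ for the two. The key point is that $T_{\nu_i}-T_{\nu_{i-1}}$ has quenched mean $\b_i$ and its quenched law, after recentering, is close to that of $\b_i(\tau_i-1)$ --- this is exactly the content of \cite[Corollary~4.5]{p1LSL2} quoted in the introduction. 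I would use Corollary~\ref{GeneralMethodcor}, or rather the variance criterion in Remark~\ref{vcrem}: one shows $\Ev_{\theta_n}(Y_n-Z_n)=0$ (matching means) and $\Varv_{\theta_n}(Y_n-Z_n)\to 0$ in $Q$-probability. The variance computation reduces, using independence of increments given $\w$, to $n^{-2/\k}\sum_{i=1}^n \bigl(\Var_\w(T_{\nu_i}-T_{\nu_{i-1}}) - \b_i^2 + \text{cross terms}\bigr)$, and the relevant estimates on $\Var_\w$ versus $\b_i^2$ (the crossing time is \emph{approximately} exponential, so these nearly cancel, with the discrepancy controlled by the tail estimate \eqref{ETnutail} and moment bounds on $\b_i$) should be available from \cite{p1LSL2,pzSL1}. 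This gives $\bar\s_{n,\w}\overset{Q}{\Lra}\bar H(N_{\l,\k})$ implies the recentered quenched law of $T_{\nu_n}$ converges (under $Q$) to the same limit.

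\emph{Step 2: from ladder locations back to $T_n$, and from $Q$ to $P$.} Here there are two things to handle. First, replace $T_{\nu_n}$ by $T_{\nu_{\nc n}}$ with $\nc=1/\bar\nu$, and compare the quenched law of $n^{-1/\k}(T_{\nu_{\nc n}}-E_\w T_{\nu_{\nc n}})$ with that of $n^{-1/\k}(T_n-E_\w T_n)$. Couple via the \emph{same} random walk path: $|T_{\nu_{\nc n}}-T_n|$ is the time the walk spends between $\nu_{\nc n}$ and $n$ (or $n$ and $\nu_{\nc n}$), and since $\nu_{\nc n}/n\to 1$ a.s.\ by \eqref{nutail} and the law of large numbers, this displacement is $o(n)$ sites; combined with the fact that the quenched expected crossing time of a typical site block is tight (no site in the relevant range carries a ``large trap'' with probability tending to $1$, by \eqref{ETnutail} and a union bound, so the discarded mass is $o(n^{1/\k})$ in the appropriate sense), one gets $\Ev_{\theta_n}|X-Y|\to 0$ in $P$-probability and applies Corollary~\ref{GeneralMethodcor}. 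The scaling index $\k$ in the sum $\sum_{i=1}^{\nc n}\b_i(\tau_i-1)$ versus $\sum_{i=1}^{n}$ produces the factor $\nc^{1/\k}=\bar\nu^{-1/\k}$, which is precisely why the intensity changes from $\l$ to $\l/\bar\nu$: since $N_{\l,\k}$ can be written as $\sum_j \delta_{\l^{1/\k}\Gamma_j^{-1/\k}}$ (Remark~\ref{r:poisson.rep}), multiplying all points by $\nc^{1/\k}$ rescales $\l\mapsto\l\nc=\l/\bar\nu$. Second, transfer from $Q$ to $P$: because $Q$ and $P$ agree on $\s(\w_x:x\ge 0)$ and $T_n$, $E_\w T_n$ for $n\ge 0$ depend (in the relevant approximate sense, once the finitely-many sites left of $0$ are shown irrelevant --- again via \eqref{nutail}, \eqref{QETform}) only on the environment to the right, the $Q$-convergence upgrades to $P$-convergence; this is the ``$P2Q$'' type argument alluded to in the remark after Lemma~\ref{GeneralMethod}, and it may require one more application of Lemma~\ref{GeneralMethod} on an enlarged space.

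\emph{Main obstacle.} I expect the genuinely delicate point to be Step~1: controlling $\Varv_{\theta_n}(Y_n-Z_n)$, i.e.\ quantifying in what sense a single crossing time $T_{\nu_i}-T_{\nu_{i-1}}$ is close to $\b_i\tau_i$ uniformly enough to sum $n$ of them after dividing by $n^{1/\k}$. Since $\k<2$, the normalization $n^{1/\k}$ is \emph{smaller} than $n^{1/2}$, so the error terms must be shown to be summably small rather than merely $o(n)$; the heavy-tailed nature of $\b_i$ under \eqref{ETnutail} means one cannot simply bound second moments crudely and must instead exploit that the \emph{bulk} (non-trap) increments contribute negligibly while the few large-trap increments are each individually well-approximated by an exponential. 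The cleanest route is probably a truncation at level $\e n^{1/\k}$, handle large $\b_i$ via the sharp per-trap approximation of \cite[Cor.~4.5]{p1LSL2} and small $\b_i$ via a second-moment bound that is $o(n^{2/\k})$ after summing, then let $\e\to 0$; the measurability of $\bar H$ (Lemma~\ref{l:meas}) ensures all the limiting objects are honest random probability measures.
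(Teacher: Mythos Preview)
Your outline matches the paper's proof almost exactly: the paper factors Proposition~\ref{bigtransfer} into three lemmas --- (i) $\bar\s_{n,\w}\overset{Q}{\Lra}\bar H(N_{\l,\k})$ implies the same for $\bar\phi_{n,\w}=P_\w\bigl(n^{-1/\k}(T_{\nu_n}-E_\w T_{\nu_n})\in\cdot\bigr)$; (ii) this in turn implies $\bar\mu_{n,\w}\overset{Q}{\Lra}\bar H(N_{\l/\bar\nu,\k})$ via the substitution $n\mapsto\nu_{\nc n}$; (iii) $Q$-convergence of $\bar\mu_{n,\w}$ implies $P$-convergence by coupling two environments that agree on the nonnegative integers. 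Your Step~1 is (i), your Step~2 combines (ii) and (iii), and your identification of the intensity change $\l\mapsto\l/\bar\nu$ with the rescaling by $\nc^{1/\k}$ is exactly how the paper argues.

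Two points where your sketch is either imprecise or slightly off. First, a minor slip: for $\k<2$ one has $n^{1/\k}>n^{1/2}$, not the other way around, so the normalization is more forgiving, not less. The real difficulty in Step~1 is not the size of the normalization but that Corollary~\ref{GeneralMethodcor} requires an \emph{explicit} coupling of each crossing time with its exponential for which $\Var_\w(T_{\nu_i}-T_{\nu_{i-1}}-\b_i\tau_i)$ is provably small when $\b_i$ is large; merely citing the distributional approximation of \cite[Cor.~4.5]{p1LSL2} does not give this. The paper's device is to write $T_\nu=S+\sum_{j=1}^N F^{(j)}$ with $N$ geometric of parameter $p_\w=P_\w(T_\nu<T_0^+)$ and then set $N=\lfloor -\tau/\log(1-p_\w)\rfloor$; a careful expansion (Lemma~\ref{varcouplelem}) yields $\Var_\w(T_\nu-\b\tau)\le (E_\w S)^2+\tfrac13(E_\w F^{(1)})^2+\Var_\w T_\nu-(E_\w F^{(1)})^2\Var_\w N$, and the tail bounds on $E_\w S$ and $E_\w F^{(1)}$ (Lemma~\ref{SFtails}) combined with the known comparison of $\Var_\w T_\nu$ with $\b^2$ from \cite{pzSL1} then close the argument after your proposed truncation. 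This coupling is the genuinely new ingredient; your truncation strategy is correct but incomplete without it.

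Second, for the $T_{\nu_{\nc n}}\to T_n$ comparison in Step~2, the union-bound heuristic (``no large trap in the discarded range'') does not by itself control $E_\w|T_n-T_{\nu_{\nc n}}-(E_\w T_n-E_\w T_{\nu_{\nc n}})|$; since both quantities are quenched-centered one should work with the variance. The paper invokes the $\k$-stable limit for $n^{-2/\k}\Var_\w T_{\nu_n}$ from \cite[Thm.~1.3]{p1LSL2}: for any $\e>0$, $Q(\Var_\w T_{\nu_{\e n}}>\d n^{2/\k})\to 1-L_{\k,b}(\d\e^{-2/\k})$, which vanishes as $\e\to 0$. This is a cleaner route than trying to sum mean crossing times over the discarded block, which for $\k\le 1$ runs into the fact that $\b_1$ has infinite mean.
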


Lemma \ref{GeneralMethod} says that weak imits for one sequence of
$\mathcal{M}_1$-valued random variables can be transferred to another
sequence of $\mathcal{M}_1$-valued random variables if these random
probability measures can be coupled in a nice way. We pursue this idea
and prove Proposition \ref{bigtransfer} by establishing the series of
lemmas below. All of these results will be proved using Lemma
\ref{GeneralMethod} and Corollary \ref{GeneralMethodcor}.  

\begin{lem}\label{P2Q}
 If $\bar{\mu}_{n,\w} \overset{Q}{\Lra} \bar{H}(N_{\l,\k})$ then $\bar{\mu}_{n,\w} \overset{P}{\Lra} \bar{H}(N_{\l,\k})$. 
\end{lem}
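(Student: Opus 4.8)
The plan is to deduce the statement from Corollary \ref{GeneralMethodcor}, by coupling a $P$-distributed environment with a $Q$-distributed one so that the two quenched laws of the centered, scaled hitting time of $n$ become close. For the environment coupling, write $A = \{\Pi_{i,-1}<1 \text{ for all } i\le -1\}$, so $Q = P(\,\cdot\mid A)$ and $P(A)>0$; since $A\in\s(\w_x:\,x<0)$ and, by Assumption \ref{iidasm}, $\s(\w_x:\,x<0)$ and $\s(\w_x:\,x\ge0)$ are $P$-independent, I would build on one probability space $(\Omega^*,\mathcal F^*,\Pv)$ a pair $(\w,\w')$ with $\w\sim P$, $\w'\sim Q$, $\w_x=\w'_x$ for all $x\ge0$, and $(\w_x)_{x<0}$, $(\w'_x)_{x<0}$ independent (the common nonnegative part with the product marginal, and two independent negative parts, one with the product marginal and one with that marginal conditioned on $A$).

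Next I would couple the two walks through a decomposition of $T_n$ that isolates the dependence on the negative sites. Encode a walk started at $0$ by its ``collapsed'' trajectory on $\{0,1,\dots\}$ — which at $y\ge1$ moves as in the environment and at $0$ moves to $1$ with probability $\w_0$ or else performs one excursion into $(-\infty,-1]$ and back to $0$ — together with the sequence of those negative excursions, which, given the environment, is i.i.d.\ and independent of the collapsed trajectory (strong Markov property). This yields $T_n = \hat T_n + V_n$, where $\hat T_n$ is the hitting time of $n$ by the collapsed walk and $V_n$ is the total duration of the negative excursions completed before $T_n$. Because $\w$ and $\w'$ agree on $[0,\infty)$, the collapsed walks have identical transition probabilities; driving them by the same randomness makes $\hat T_n$ and the number $G_n$ of negative excursions common to the two environments. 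So, letting $\theta_n$ be the joint quenched law of $\bigl(\tfrac{T_n'-E_{\w'}T_n}{n^{1/\k}},\tfrac{T_n-E_\w T_n}{n^{1/\k}}\bigr)$ under this coupling, its marginals are $\bar\mu_{n,\w'}$ and $\bar\mu_{n,\w}$, and $T_n-T_n' = V_n-V_n'$.

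The heart of the proof is then two pointwise-in-$(\w,\w')$ bounds. First, transience to $+\infty$ gives $E_\w[\#\{\text{visits of the walk to }0\}] = (1+R_1)/\w_0<\infty$, a quantity free of $n$, and $G_n$ is at most the number of such visits before $T_n$; since each negative excursion has quenched mean length $E_\w^{-1}T_0 = 1+2W_{-1}<\infty$ by \eqref{QETform} and is independent of $G_n$, one gets $E_\w V_n\le \tfrac{1+R_1}{\w_0}\bigl(1+2W_{-1}(\w)\bigr)$ uniformly in $n$, and likewise for $E_{\w'}V_n'$. Second, $E_\w T_n = \sum_{x=0}^{n-1}(1+2W_x)$ together with $W_x = W_{0,x}+\Pi_{0,x}W_{-1}$ for $x\ge0$ gives $E_\w T_n-E_{\w'}T_n = 2\bigl(W_{-1}(\w)-W_{-1}(\w')\bigr)R_{0,n-1}$, which is bounded in $n$ since $R_{0,n-1}\uparrow R_0<\infty$ (Assumption \ref{tasm}). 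Hence $\Ev_{\theta_n}|X-Y|\le n^{-1/\k}\bigl(E_\w V_n+E_{\w'}V_n'+|E_\w T_n-E_{\w'}T_n|\bigr)\to 0$, $\Pv$-a.s., so \eqref{meancouple} holds. Since $\w'\sim Q$, the marginal $\bar\mu_{n,\w'}$ has the $Q$-law of $\bar\mu_{n,\cdot}$, which converges to $\bar H(N_{\l,\k})$ by hypothesis, and Corollary \ref{GeneralMethodcor} then yields $\bar\mu_{n,\w}\overset{P}{\Lra}\bar H(N_{\l,\k})$.

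The only genuinely nontrivial point is this last step: seeing that the entire $P$-versus-$Q$ discrepancy in the quenched law of $T_n$ is carried by the leftward excursions (finitely many in quenched mean) and by the $n$-independent term $2W_{-1}R_0$, both of which are $o(n^{1/\k})$; everything else is bookkeeping with the coupling and an appeal to Corollary \ref{GeneralMethodcor}. I note that only $n^{1/\k}\to\infty$ is used, so the argument is insensitive to the value of $\k>0$.
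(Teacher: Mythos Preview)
Your proposal is correct and follows essentially the same route as the paper: couple $\w\sim P$ and $\w'\sim Q$ to agree on $[0,\infty)$, couple the two walks so that they differ only in the time spent on the negative half-line, and then observe that both $E_\w V_n+E_{\w'}V_n'$ and $|E_\w T_n-E_{\w'}T_n'|=2|W_{-1}(\w)-W_{-1}(\w')|R_{0,n-1}$ are bounded uniformly in $n$, so Corollary~\ref{GeneralMethodcor} applies. The only cosmetic difference is that the paper couples the walks via common Bernoulli variables at nonnegative sites (yielding $|T_n-T_n'|=|L_n-L_n'|$), whereas you phrase the same coupling through the ``collapsed trajectory'' and its negative excursions; the resulting bounds are identical.
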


\begin{lem}\label{Tn2Tnun}
 For $\w \in \Omega$, let $\bar\phi_{n,\w} \in \mathcal{M}_1$ be defined by 
\[
 \bar\phi_{n,\w}(\cdot) = P_\w\left( \frac{T_{\nu_n} - E_\w T_{\nu_n}}{n^{1/\k}} \in \cdot \right) = P_\w\left( \frac{1}{n^{1/\k}} \sum_{i=1}^n ( T_{\nu_i} - T_{\nu_{i-1}} - \b_i ) \in \cdot \right).
\]
If $\bar{\phi}_{n,\w} \overset{Q}{\Lra} \bar{H}(N_{\l,\k})$ then $\bar{\mu}_{n,\w} \overset{Q}{\Lra} \bar{H}(N_{\l/\bar{\nu},\k})$.
\end{lem}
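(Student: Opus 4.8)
The plan is to deduce the claim from Corollary~\ref{GeneralMethodcor} by coupling the two families of hitting times along a single quenched path. Set $\nc = 1/\bar\nu$ and $m = m(n) = \fl{\nc n}$, and let $\theta_n$ be the law under $P_\w$ of the pair
\[
\left( \frac{T_{\nu_m} - E_\w T_{\nu_m}}{n^{1/\k}},\ \frac{T_n - E_\w T_n}{n^{1/\k}} \right),
\]
a random probability measure on $\R^2$ (measurably depending on $\w$). Its first marginal $\gamma_n$ is the image of $\bar\phi_{m,\w}$ under the scaling $x \mapsto (m/n)^{1/\k} x$, while its second marginal $\gamma_n'$ is precisely $\bar\mu_{n,\w}$. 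Thus it suffices to establish (i) $\gamma_n \overset{Q}{\Lra} \bar H(N_{\l/\bar\nu,\k})$ and (ii) $\Ev_{\theta_n}|X - Y| \to 0$ in $Q$-probability; Corollary~\ref{GeneralMethodcor} then gives $\bar\mu_{n,\w} = \gamma_n' \overset{Q}{\Lra} \bar H(N_{\l/\bar\nu,\k})$.

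For (i): the hypothesis $\bar\phi_{k,\w} \overset{Q}{\Lra} \bar H(N_{\l,\k})$ as $k \to \infty$ holds in particular along $k = m(n) \to \infty$, and since $(m/n)^{1/\k} \to \nc^{1/\k}$ is a deterministic convergent sequence, Slutsky's theorem yields the joint convergence $\bigl( (m/n)^{1/\k}, \bar\phi_{m,\w} \bigr) \overset{Q}{\Lra} \bigl( \nc^{1/\k}, \bar H(N_{\l,\k}) \bigr)$ in $(0,\infty) \times \mathcal{M}_1$. Applying the continuous map $(c,\mu) \mapsto \mu \circ S_c^{-1}$, $S_c(x) = cx$ (continuity of this scaling action on $(0,\infty) \times \mathcal{M}_1$ is standard), and using that $\bar H(c\zeta)(\cdot) = \bar H(\zeta)(\cdot / c)$ for the image $c\zeta$ of $\zeta \in \mathcal{M}_p$ under $x \mapsto cx$ (immediate from \eqref{barHdef}), we obtain $\gamma_n \overset{Q}{\Lra} \bar H\bigl( \nc^{1/\k} N_{\l,\k} \bigr)$. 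A change of variables in the intensity measure shows $\nc^{1/\k} N_{\l,\k}$ is a Poisson process with intensity $\l \nc\, x^{-\k - 1} = (\l/\bar\nu) x^{-\k - 1}$, i.e.\ $\nc^{1/\k} N_{\l,\k} \eid N_{\l/\bar\nu,\k}$, which proves (i).

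For (ii): since $T_n$ and $T_{\nu_m}$ are measured on the same nearest-neighbour path, the nearer of the sites $n$ and $\nu_m$ is visited first, so $Z_n := T_n - T_{\nu_m}$ has a fixed ($\w$-dependent) sign; hence $E_\w|Z_n| = |E_\w Z_n| = |E_\w T_n - E_\w T_{\nu_m}|$ and
\[
\Ev_{\theta_n}|X - Y| = \frac{E_\w | Z_n - E_\w Z_n |}{n^{1/\k}} \le \frac{2\, E_\w|Z_n|}{n^{1/\k}}.
\]
Let $m_1 = \min\{ i : \nu_i \ge n\}$, so that $\nu_{m_1-1} < n \le \nu_{m_1}$ and hence $T_{\nu_{m_1-1}} \le T_n \le T_{\nu_{m_1}}$ $P_\w$-a.s. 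Sandwiching $T_n$ this way and using $E_\w T_{\nu_j} - E_\w T_{\nu_{j-1}} = \b_j$ from \eqref{bdef}, one gets $E_\w|Z_n| \le \sum_{i \in I_n} \b_i$, where $I_n$ is an interval of at most $|m - m_1| + 1$ consecutive integers. Because $(\nu_j - \nu_{j-1})_{j \ge 1}$ is i.i.d.\ under $Q$ with exponential tails by \eqref{nutail}, both $\nu_m - n$ and $m_1 - \nc n$ are of order $n^{1/2}$ in $Q$-probability, so $|I_n| = O_P(n^{1/2})$; and because $(\b_i)$ is stationary under $Q$ with $Q(\b_1 > x) = Q(E_\w T_\nu > x) \sim \tc x^{-\k}$ by \eqref{ETnutail}, a sum of $k$ of the $\b_i$ is of order $k^{1/\k}$ (for $\k < 1$), $k \log k$ (for $\k = 1$), or $k$ (for $\k \in (1,2)$) in $Q$-probability. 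With $k = |I_n| = O_P(n^{1/2})$ this makes $\sum_{i \in I_n} \b_i$ of order $n^{1/(2\k)}$, $n^{1/2} \log n$, or $n^{1/2}$ respectively, each of which is $o(n^{1/\k})$ exactly because $\k < 2$. This gives (ii).

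The main obstacle is (ii), specifically the regime $\k \in (0,1)$: one must show that a sum of $O_P(\sqrt n)$ of the heavy-tailed — and \emph{not} independent — variables $\b_i$ stays $o(n^{1/\k})$. This is where the sharp tail \eqref{ETnutail}, the stationarity of $(\b_i)$ under $Q$, and the partial-sum estimates for the ladder increments already developed in \cite{pzSL1,p1LSL2} are needed; passing to the renewal index $m_1$ and bounding $|I_n|$ via \eqref{nutail} are comparatively routine.
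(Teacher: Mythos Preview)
Your argument is correct, and part (i) is exactly the paper's scaling step. Part (ii), however, is genuinely different from what the paper does. The paper couples via the \emph{variance} criterion of Remark~\ref{vcrem}: since both measures are centered, it suffices to show $n^{-2/\k}\Var_\w(T_n-T_{\nu_{\nc n}})\to 0$ in $Q$-probability. The paper bounds this, on the event $\{|n-\nu_{\nc n}|\le \e n\}$ (which has $Q$-probability $\to 1$ by the law of large numbers), by $\Var_\w(T_{\nu_{[\nc n]+[\e n]}}-T_{\nu_{\nc n}})$, invokes stationarity under $Q$ to replace this by $\Var_\w(T_{\nu_{\e n}})$, and then uses the $\k$-stable limit for $n^{-2/\k}\Var_\w T_{\nu_n}$ from \cite[Theorem~1.3]{p1LSL2}; letting $\e\to 0$ finishes it.

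You instead use the $L^1$ criterion of Corollary~\ref{GeneralMethodcor} directly, exploiting the observation that $Z_n=T_n-T_{\nu_m}$ has a fixed sign so that $E_\w|Z_n|=|E_\w Z_n|$; this reduces the problem to controlling a partial sum $\sum_{i\in I_n}\b_i$ of the quenched \emph{means} rather than a quenched variance. You then trade the paper's ``$\e n$ window $+$ $\e\to 0$'' device for a sharper $|I_n|=O_P(\sqrt n)$ bound coming from the renewal CLT (valid since $\nu$ has exponential tails by \eqref{nutail}), and you cite the stable limit/ergodic growth for $\sum_{i=1}^k\b_i=E_\w T_{\nu_k}$ from \cite{pzSL1,p1LSL2} in place of the variance stable limit. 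Both routes lean on comparable external input; yours avoids the quenched-variance limit theorem at the cost of needing the renewal CLT, and the fixed-sign trick is a tidy way to get the $L^1$ bound without passing through second moments. One point worth making explicit in your write-up: since $m=\lfloor\nc n\rfloor$ is deterministic, on the event $\{|m-m_1|\le K\sqrt n\}$ you can enclose $I_n$ in the deterministic window $[m-K\sqrt n,\,m+K\sqrt n]$ and then invoke stationarity of $(\b_i)$ under $Q$ to reduce to $\sum_{i=1}^{2K\sqrt n}\b_i$; this is the step that handles the randomness of $I_n$ and the dependence among the $\b_i$ simultaneously.
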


\begin{lem}\label{Tnun2bt}
 If $\bar{\s}_{n,\w} \overset{Q}{\Lra} \bar{H}(N_{\l,\k})$ then $\bar{\phi}_{n,\w} \overset{Q}{\Lra} \bar{H}(N_{\l,\k})$.
\end{lem}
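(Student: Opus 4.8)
The plan is to apply Corollary \ref{GeneralMethodcor} with the obvious coupling: on the enlarged space carrying the i.i.d.\ exponentials $(\tau_i)$, the random probability measure $\theta_n$ on $\R^2$ is taken to be the $P_\w$-law of the pair
\[
\Bigl( \tfrac{1}{n^{1/\k}}\sum_{i=1}^n (T_{\nu_i}-T_{\nu_{i-1}}-\b_i), \ \tfrac{1}{n^{1/\k}}\sum_{i=1}^n \b_i(\tau_i-1)\Bigr),
\]
so that its two marginals are exactly $\bar\phi_{n,\w}$ and $\bar\s_{n,\w}$. The coupling must be chosen so that the crossing times $T_{\nu_i}-T_{\nu_{i-1}}$ and the scaled exponentials $\b_i\tau_i$ are matched increment-by-increment; concretely, I would use the standard device of running the walk from $\nu_{i-1}$ to $\nu_i$ and writing $T_{\nu_i}-T_{\nu_{i-1}} = \b_i \tau_i + (\text{error}_i)$, where $\tau_i$ is built from the walk's excursion structure in block $i$ (e.g.\ via the fact that the time to cross a large trap is asymptotically exponential, \cite[Corollary 4.5]{p1LSL2}) and is i.i.d.\ mean-$1$ exponential under $P_\w$. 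Then $X-Y$ under $\theta_n$ is $n^{-1/\k}\sum_{i=1}^n (\text{error}_i)$.

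With this coupling in hand, by Remark \ref{vcrem} it suffices to show
\[
\Ev_{\theta_n}(X-Y) = \frac{1}{n^{1/\k}}\sum_{i=1}^n E_\w(\text{error}_i) \to 0
\quad\text{and}\quad
\Varv_{\theta_n}(X-Y) = \frac{1}{n^{2/\k}}\sum_{i=1}^n \Var_\w(\text{error}_i) \to 0,
\]
both in $Q$-probability. For the mean: since $E_\w(T_{\nu_i}-T_{\nu_{i-1}}) = \b_i = E_\w(\b_i\tau_i)$ by \eqref{bdef} and the fact that $\tau_i$ is mean $1$, the centering is exact increment-by-increment provided the coupling is arranged so that $E_\w(\text{error}_i)=0$; if instead one only has $E_\w(\text{error}_i)$ small, one controls $n^{-1/\k}\sum_i E_\w(\text{error}_i)$ via a law of large numbers under $Q$ using stationarity of $(\b_i)$ and $\k<2$ (so $n^{-1/\k}\cdot n\to 0$ is false — rather one needs the per-term bound to be $o(1)$ on average, i.e.\ $E_Q|\text{error}_1|<\infty$ is not enough and one genuinely needs the errors to telescope or cancel). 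The cleaner route, which I expect the authors to take, is to design the coupling so the mean is identically zero and then only estimate the variance. For the variance: using that $(\b_i)$ is stationary under $Q$ and the variance of a crossing of block $i$ that does \emph{not} contain a large trap is $\bigo(1)$ (bounded moments, from \eqref{QETform} and tail estimates on $W$), while the contribution of blocks \emph{with} a large trap is captured accurately by $\b_i\tau_i$ so its error variance is $o(\b_i^2)$, one gets $\sum_{i=1}^n \Var_\w(\text{error}_i) = o(n^{2/\k})$ after splitting into "small'' and "large'' blocks and invoking the tail estimate \eqref{ETnutail}, $Q(\b_1>x)\sim \tc x^{-\k}$, together with a truncation argument.

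The main obstacle is the variance estimate for the blocks containing large traps: one must show that, even though $\b_i^2$ can be of order $n^{2/\k}$ for the handful of dominant blocks, the \emph{error} $T_{\nu_i}-T_{\nu_{i-1}}-\b_i\tau_i$ inside such a block has variance of smaller order, i.e.\ the exponential approximation \cite[Corollary 4.5]{p1LSL2} must be made quantitative enough (in a relative sense, error variance $= o(\b_i^2)$ uniformly in the trap size) to survive being summed over the $\bigo(1)$ large blocks and rescaled by $n^{-2/\k}$. This requires combining the sharp description of the quenched crossing-time law near a large trap with the power-law tail \eqref{ETnutail} and a standard point-process truncation (discard blocks with $\b_i \le \e n^{1/\k}$, whose total variance is $O(\e^{2-\k} n^{2/\k})$ by the tail, then let $\e\to0$ after $n\to\infty$). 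Everything else — the reduction to Corollary \ref{GeneralMethodcor}, the marginal identification, the mean being zero — is routine.
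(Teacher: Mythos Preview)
Your high-level plan matches the paper's exactly: apply Corollary~\ref{GeneralMethodcor} via Remark~\ref{vcrem}, note that both marginals are centered so only the variance condition~\eqref{varcouple} needs checking, split blocks into ``small'' and ``large'' (the paper uses $M_i \le n^{(1-\e)/\k}$ rather than $\b_i \le \e n^{1/\k}$, but this is cosmetic), dispose of small blocks via crude variance bounds (the paper cites \cite[Lemma 5.5]{pzSL1}), and for large blocks show the coupling error has variance $o(\b_i^2)$.

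The gap is the coupling itself, which you leave as a black box and which is really the heart of the proof. Your suggestion --- extract $\tau_i$ \emph{from} the walk's excursion structure via the asymptotic exponentiality of \cite[Corollary~4.5]{p1LSL2} --- points in the wrong direction: that corollary is a weak convergence statement and does not by itself yield a coupling with quantitatively small error variance. The paper goes the \emph{opposite} way: it starts with the i.i.d.\ exponentials $(\tau_i)$ and builds the crossing time from them. Specifically, write $T_\nu = S + \sum_{j=1}^N F^{(j)}$ where $N$ is the number of failed excursions from $\nu_{i-1}$ (geometric with parameter $p_\w = P_\w(T_\nu < T_0^+)$), the $F^{(j)}$ are i.i.d.\ failure-excursion durations, and $S$ is the final successful crossing. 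Then couple by setting $N = \lfloor c_\w \tau \rfloor$ with $c_\w = -1/\log(1-p_\w)$, which makes $N$ exactly geometric($p_\w$). A careful calculation (Lemma~\ref{varcouplelem}) gives
\[
\Var_\w(T_\nu - \b\tau) \le (E_\w S)^2 + \tfrac{1}{3}(E_\w F^{(1)})^2 + \Var_\w(T_\nu) - (E_\w F^{(1)})^2 \Var_\w N,
\]
and the last two terms nearly cancel because $\Var_\w T_\nu \approx \b^2 \approx (E_\w F^{(1)})^2 \Var_\w N$ when $M_1$ is large (this is \cite[Corollary~5.6]{pzSL1}). What remains are the terms $(E_\w S)^2$ and $(E_\w F^{(1)})^2$, which are shown (Lemma~\ref{SFtails}) to be polynomially small in $n$ on the large-block event, so that summing over at most $n$ blocks and dividing by $n^{2/\k}$ gives~\eqref{CheckVarCouple}. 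Without this explicit construction and the algebraic identity in Lemma~\ref{varcouplelem}, the claim ``error variance $=o(\b_i^2)$'' is an assertion rather than a proof.
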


\begin{proof}[Proof of Lemma \ref{P2Q}] 
 Recall that $P$ and $Q$ are identical on $\s(\w_x : \, x\geq 0)$. We
 start with a coupling of $P$ and $Q$ that that produces two
 environments that agree on
 the non-negative integers. Let $\w$ be an environment with
 distribution $P$ and let $\tilde{\w}$ be an independent environment
 with distribution $Q$. Then, construct the environment $\w'$ by
 letting  
\[
 \w'_x = \begin{cases} \tilde{\w}_x & x \leq -1 \\ \w_x & x \geq 0. \end{cases}
\]
Then $\w'$ has distribution $Q$ and is identical to $\w$ on the
non-negative integers. Let $\Pv$ be the joint distribution of
$(\w,\w')$ in the above coupling. 
Given a pair of environments $(\w,\w')$, we will construct coupled 
random walks $\{X_n\}$ and $\{X_n'\}$ with hitting times $\{T_n\}$ and
$\{T_n'\}$, respectively, so that the marginal distributions of
$\{X_n\}$ and $\{X_n'\}$ 
are $P_\w$ and $P_{\w'}$ respectively. Let $P_{\w,\w'}$ denote the
joint distribution of $\{X_n\}$ and $\{X_n'\}$ with expectations denoted by
$E_{\w,\w'}$, and consider  random probability measures on $\R^2$
defined by
$$
\theta_n(\cdot) = P_{\w,\w'}\left[ \left( \frac{T_n -
    E_{\w,\w'}T_n}{n^{1/\k}}, \frac{T_n' -
    E_{\w,\w'}T_n'}{n^{1/\k}}\right)\in \cdot\right].
$$
We wish to construct the coupled random walks so that 
\be\label{wwTn}
 \lim_{n\ra\infty} n^{-1/\k} E_{\w,\w'}|(T_n - E_{\w,\w'}T_n) - (T_n'
 - E_{\w,\w'}T_n')| = 0, \quad \Pv-a.s. 
\ee
This will be more than enough to satisfy conditions \eqref{meancouple}
of Corollary \ref{GeneralMethodcor}, and the conclusion of Lemma
\ref{P2Q} will follow.  

We now show how to construct coupled random walks  $\{X_n\}$ and
$\{X_n'\}$. Since the environments $\w$ and $\w'$ agree on the
non-negative integers, our coupling will cause the two walks to move
in the same manner at all locations $x\geq 0$.   
Precisely, on their respective $i$th visits to site $x\geq 0$, they
will both either move to the right or both move to the left.  
To do this, let $\bar{\xi}=\{\xi_{x,i}\}_{x\in\Z, i\geq 1}$ be a
collection of i.i.d.\ standard uniform random variables that is
independent of everything else. Then, given $(\w,\w')$ and
$\bar{\xi}$, 
construct the random walks as follows: 
\[
 X_0 = 0, \quad\text{and}\quad
X_{n+1} = 
 \begin{cases}
  X_n + 1 & \text{if } X_n = x, \, \#\{ k\leq n: X_k = x \} = i, \, \text{and } \xi_{x,i} \leq \w_x \\
  X_n - 1 & \text{if } X_n = x, \, \#\{ k\leq n: X_k = x \} = i, \, \text{and } \xi_{x,i} > \w_x
 \end{cases}
\]
and
\[
 X_0' = 0, \quad\text{and}\quad
X_{n+1}' = 
 \begin{cases}
  X_n' + 1 & \text{if } X_n' = x, \, \#\{ k\leq n: X_k' = x \} = i, \, \text{and } \xi_{x,i} \leq \w_x' \\
  X_n' - 1 & \text{if } X_n' = x, \, \#\{ k\leq n: X_k' = x \} = i, \, \text{and } \xi_{x,i} > \w_x'.
 \end{cases}
\]

Having constructed our coupling, we now turn to the proof of \eqref{wwTn}. It is enough in fact to show that
\be\label{wwTn2}
 \sup_n E_{\w,\w'}|T_n-T_n'| < \infty, \quad\text{and}\quad \sup_n
 |E_{\w,\w'}T_n - E_{\w,\w'}T_n'|<\infty, \quad \Pv \text{-a.s.}
\ee
To show the second inequality  in \eqref{wwTn2}, we use the explicit
formula \eqref{QETform} for the quenched expectations of hitting
times, so that  
\[
 E_\w T_n = n + 2 \sum_{i=0}^n W_i = n + 2 \sum_{i=0}^n (W_{0,i} +
 \Pi_{0,i}W_{-1}) = n + 2 \sum_{i=0}^n W_{0,i} + 2 W_{-1} R_{0,n}.  
\]
Similarly, (with the obvious notation for corresponding random
variables corresponding to $\w'$) 
\[
 E_{\w'} T_n' = n + 2 \sum_{i=0}^n W_{0,i}' + 2 W_{-1}' R_{0,n-1}' = n
 + 2 \sum_{i=0}^n W_{0,i} + 2 W_{-1}' R_{0,n}, 
\]
where the second equality is valid because $\w_x = \w_x'$ for all
$x\geq 0$. Thus,  
\[
 \sup_{n} |E_{\w,\w'}T_n - E_{\w,\w'}T_n'| = \sup_{n} 2 R_{0,n}
 |W_{-1} - W_{-1}'| = 2 R_0 |W_{-1} - W_{-1}'| < \infty, \quad
 \Pv\text{-a.s.} 
\]
Turning to the first inequality in \eqref{wwTn2}, let
\[
 L_n := \sum_{k=0}^{T_n} \ind{X_k < 0}, \qquad L_n' :=\sum_{k=0}^{T_n'} \ind{X_k < 0},
\]
be  the number of visits by by the walks $\{X_n\}$ and $\{X_n'\}$,
correspondingly, to the negative integers, by the time they reach site $x=n$. The coupling of $T_n$ and
$T_n'$ constructed above is such that $|T_n - T_n'| = |L_n-L_n'|$.  
Therefore,
\[
 E_{\w,\w'} |T_n-T_n'| = E_{\w,\w'} |L_n-L_n'| \leq E_{\w} L_n + E_{\w'} L_n'. 
\]
Letting $L = \lim_{n\ra\infty} L_n$ and $L' = \lim_{n\ra\infty} L_n'$
denote the total amount of time spent in the negative integers by the
random walks $\{X_n\}$ and $\{X_n'\}$, respectively, we need only to show
that $E_{\w} L + E_{\w'}L' < \infty$, $\Pv$-a.s. 
To this end, note that $L = \sum_{i=1}^G U_i$ where $G$ is the number
of times the random walk $\{X_n\}$ steps from $0$ to $-1$ and the
$U_i$ is the amount of time it takes to reach $0$ after the $i$th
visit to $-1$. Note that $G$ is a geometric random variable starting
from $0$ with success parameter $P_\w(T_{-1} = \infty) > 0$, and that
the $U_i$ are independent (and independent of $G$) with common
distribution equal to that of the time it takes a random walk in
environment $\w$ to reach 0 when starting at $-1$. 
Thus, by first conditioning on $G$, we obtain that 
\[
 E_{\w} L 
= E_\w\left[ G \left(E_\w^{-1} T_0\right) \right] = \left(E_\w^{-1} T_0\right)\frac{P_\w(T_{-1} < \infty)}{P_\w(T_{-1} = \infty)}.
\]
Similarly, 
\[
 E_{\w'} L' = \left(E_{\w'}^{-1} T_0\right)\frac{P_{\w'}(T_{-1} <
   \infty)}{P_{\w'}(T_{-1} = \infty)}. 
\]
This completes the proof since $E_\w^{-1} T_0$ and $E_{\w'}^{-1} T_0$
are finite, $\Pv$-a.s. by \eqref{QETform}. 
\end{proof}

\begin{proof}[Proof of Lemma \ref{Tn2Tnun}]
For $\w\in \Omega$, let $\hat{\phi}_{n,\w}\in\mathcal{M}_1$ be defined by
\[
 \hat{\phi}_{n,\w}(A) = P_\w\left( \frac{T_{\nu_{\nc n}} - E_\w T_{\nu_{\nc n}} }{n^{1/\k}} \in A \right) = \bar{\phi}_{\fl{\nc n},\w}\left( \frac{n^{1/\k}}{\fl{\nc n}^{1/\k}} \, A \right).  
\]
Since $n^{1/\k}/\fl{\nc n}^{1/\k} \ra \nc^{-1/\k} = \bar{\nu}^{1/\k}$ as $n\ra\infty$,  it
follows (for example, by Lemma \ref{GeneralMethod}) that 
\[
 \bar\phi_{n,\w}(\cdot) \overset{Q}{\Lra} \bar{H}(N_{\l,\k})(\cdot) \quad\text{implies that}\quad \hat\phi_{n,\w}(\cdot) \overset{Q}{\Lra} \bar{H}(N_{\l,\k})(\bar{\nu}^{1/\k} \, \cdot)
\]
Now, it follows from \eqref{e:repr.lim} that
$\bar{H}(N_{\l,\k})(\bar{\nu}^{1/\k} \, \cdot) \overset{Law}{=}
\bar{H}(N_{\l/\bar\nu,\k})(\cdot)$. Therefore, the claim of the lemma
will follow once we check that  
\be\label{Tnucn2Tn}
  \hat\phi_{n,\w} \overset{Q}{\Lra} \bar{H}(N_{\l,\k}) \quad\text{implies that}\quad \bar\mu_{n,\w} \overset{Q}{\Lra} \bar{H}(N_{\l,\k})
\ee

To show \eqref{Tnucn2Tn} we will verify condition \eqref{varcouple} of
the remark following Corollary \ref{GeneralMethodcor}.  Since both $\hat\phi_{n,\w}$ and
$\bar{\mu}_{n,\w}$ are mean zero distributions on $\R$, it is enough
to  show that
\be\label{vTnTnucn}
\lim_{n\ra\infty} Q\left( n^{-2/\k} \Var_\w(T_n - T_{\nu_{\nc n}}) > \d \right) = 0, \quad \forall \d>0. 
\ee
 To this end, note that if $\nu_{\nc n} \leq n \leq \nu_{k}$ then
 $\Var_\w(T_n - T_{\nu_{\nc n}}) = \sum_{x=\nu_{\nc n}+1}^{n}
 \Var_\w(T_x-T_{x-1}) \leq \Var_\w(T_{\nu_k} - T_{\nu_{\nc n}})$. A
 similar inequality holds if $\nu_k \leq n \leq \nu_{\nc n}$. Using this,
 we obtain that for any $\e>0$ 
\begin{align}
 Q\left( \Var_\w(T_n - T_{\nu_{\nc n}}) > \d n^{2/\k} \right) 
& \leq Q(|n-\nu_{\nc n}| > \e n ) + Q\left( \Var_\w(T_{\nu_{[\nc n]+[\e n]}} - T_{\nu_{\nc n}} ) > \d n^{2/\k} \right) \nonumber \\
&\qquad + Q\left( \Var_\w( T_{\nu_{\nc n}} - T_{\nu_{[\nc n]-[\e n]}}) > \d n^{2/\k} \right) \nonumber \\
& = Q(|n-\nu_{\nc n}| > \e n ) + 2 Q\left( \Var_\w(T_{\nu_{\e n}} ) > \d
 n^{2/\k} \right), \label{svar1} 
\end{align}
where the last equality is due to the fact that, under the measure
$Q$,  the environment is stationary under shifts of the ladder
locations. The first term in \eqref{svar1} vanishes since $\nu_{\nc n}/n
\ra 1$, $Q$-a.s., by the law of large numbers. For the second term in
\eqref{svar1}, recall that $n^{-2/\k} \Var_\w T_{\nu_n}$ has a
$\k$-stable limiting distribution under $Q$ \cite[Theorem
  1.3]{p1LSL2}. Thus, there exists a $b>0$ such that   
\[
 \lim_{n\ra\infty} Q\left( \Var_\w(T_{\nu_{\e n}} ) > \d n^{2/\k} \right) = 1 - L_{\k,b}(\d \e^{-2/\k}). 
\]
Since the right hand side can be made arbitrarily small by taking
$\e\ra 0$, we have finished the proof of \eqref{vTnTnucn} and, thus, 
also of the lemma.  
\end{proof}

\begin{proof}[Proof of Lemma \ref{Tnun2bt}]
The proof of the lemma consists of showing that we can couple the
standard exponential random variables of Proposition \ref{bigtransfer}
with the random walk $\{X_n\}$ in such a way that condition
\eqref{varcouple} of the remark following Corollary \ref{GeneralMethodcor} holds. Since the
relevant random probability measures have zero means, we only need to
ensure that  
\be\label{CheckVarCouple}
 \lim_{n\ra\infty} Q\left( n^{-2/\k} \Var_\w\left(T_{\nu_n} - E_\w T_{\nu_n} - \sum_{i=1}^n \beta_i(\tau_i-1) \right) > \d \right) 
= 0, \quad \forall \d>0.
\ee
We will perform the coupling in such a way that the sequence of pairs
$(T_{\nu_i}-T_{\nu_{i-1}}, \tau_i)$ is independent under the quenched
law $P_\w$. Since $E_\w T_{\nu_n} = \sum_{i=1}^n \b_i$, this will
imply that 
\[
 \Var_\w\left(T_{\nu_n} - E_\w T_{\nu_n} - \sum_{i=1}^n \beta_i(\tau_i-1) \right) 
= \sum_{i=1}^n \Var_\w\left( T_{\nu_i} - T_{\nu_{i-1}} - \beta_i \tau_i \right). 
\]
As in \cite{pzSL1}, for any $i$ define 
\be\label{Mdef}
 M_i = \max \{ \Pi_{\nu_{i-1},j}: \, \nu_{i-1}\leq j < \nu_i \}. 
\ee
The utility of the sequence $M_i$ is that it is roughly comparable to $\b_i$ and $\sqrt{\Var_\w(T_{\nu_i}-T_{\nu_{i-1}})}$, but $M_i$ is an i.i.d.\ sequence of random variables (see \cite[equations (15) and (63)]{pzSL1} for precise statements regarding these comparisons). 
In \cite[Lemma 5.5]{pzSL1} it was shown that for any $0<\e<1$, 
\[
 \lim_{n\ra\infty} Q\left( \frac{1}{n^{2/k}} \sum_{i=1}^n \Var_\w (T_{\nu_i}-T_{\nu_{i-1}})\ind{M_i \leq n^{(1-\e)/\k}} > \d\right) = 0, \quad \forall \d>0.  
\]
A similar argument (see also the proof of \cite[Lemma 3.1]{pzSL1}) implies that 
\[
 \lim_{n\ra\infty} Q\left( \frac{1}{n^{2/k}} \sum_{i=1}^n \b_i^2 \ind{M_i \leq n^{(1-\e)/\k}} > \d \right) = 0, \quad \forall \d>0. 
\]
Then, since $\Var_{\w}(T_{\nu_i} - T_{\nu_{i-1}} - \b_i \tau_i) \leq 2
\Var_{\w}(T_{\nu_i} - T_{\nu_{i-1}}) + 2 \b_i^2$,  in order to
guarantee \eqref{CheckVarCouple} it is enough to perform a  coupling
in such a way that for some $0<\e<1$, 
\be \label{e:req.couple}
 \lim_{n\ra\infty} Q\left( \frac{1}{n^{2/k}} \sum_{i=1}^n \Var_{\w}(T_{\nu_i} - T_{\nu_{i-1}} - \b_i \tau_i) \ind{M_i > n^{(1-\e)/\k}} > \d \right) = 0, \quad \forall \d>0.
\ee
Recall that we separately couple each exponential random variable
$\tau_i$ with the corresponding crossing time $T_{\nu_i}-T_{\nu_{i-1}}$. 
For simplicity of notation we will describe this coupling when $i=1$,
and we will denote $\nu_1$, $\beta_1$ and $\tau_1$ by $\nu$, $\beta$
and $\tau$, respectively.  

First, note that $T_{\nu}$ can be constructed by doing repeated excursions from the origin. Let $T_0^+ = \inf\{n > 0 :\, X_n = 0\}$ be the first return time to the origin, and let $\{F^{(j)}\}_{j\geq 1}$ be an i.i.d.\ sequence of random variables all having the distribution of $T_0^+$ under $P_\w(\cdot \, | \, T_0^+ < T_\nu )$. Also, let
let $S$ be independent of the $\{F^{(j)}\}$ and have the same
distribution as $T_\nu$ under $P_\w( \cdot \, | \, T_\nu < T_0^+ )$.  
Finally, let $N$ be independent of $S$ and the $\{F^{(j)}\}$ and have
a geometric distribution starting from $0$ with success parameter
$p_\w = P_\w(T_\nu < T_0^+ )$. Then we can construct $T_\nu$ by
letting  
\be\label{couplingsum}
 T_\nu = S + \sum_{j=1}^N F^{(j)}. 
\ee
Note that 
\be\label{beq}
 \b = E_\w T_\nu = E_\w S + \frac{1-p_\w}{p_\w}(E_\w F^{(1)})
\ee 
Given this construction of $T_\nu$, the most natural way to couple
$T_\nu$ with $\tau$ is to provide a coupling between $\tau$ and $N$.  
We set 
\be\label{couplingN}
 N= \fl{ c_\w \tau}, \qquad\text{where } c_\w = \frac{-1}{\log(1-p_\w)},
\ee
so that $N$ is exactly a geometric random variable with parameter
$p_\w$.  

For this coupling, we obtain the following bound on $\Var_\w(T_\nu - \b \tau)$.
\begin{lem}\label{varcouplelem}
 Let $T_\nu$ and $\beta \tau$ be coupled using \eqref{couplingsum} and \eqref{couplingN}. Then,
\be\label{varcoupleineq}
 \Var_\w(T_\nu - \b \tau) \leq (E_\w S)^2 + \frac{(E_\w F^{(1)})^2}{3} + \Var_\w(T_\nu) - (E_\w F^{(1)})^2 \Var_\w (N).
\ee
\end{lem}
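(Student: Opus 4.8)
The plan is to split $T_\nu-\b\tau$ into a piece that is uncorrelated with $\sigma(\tau)$ and a $\sigma(\tau)$-measurable remainder, and to estimate the variance of each piece separately. Since, by \eqref{couplingN}, $N$ is geometric starting from $0$ with success parameter $p_\w$, we have $E_\w N=(1-p_\w)/p_\w$, so \eqref{beq} can be rewritten as $\b=E_\w S+(E_\w N)(E_\w F^{(1)})$. Using \eqref{couplingsum} I would then write
\[
 T_\nu-\b\tau = W + Z_1 + Z_2,\qquad W:=(S-E_\w S)+\sum_{j=1}^N\bigl(F^{(j)}-E_\w F^{(1)}\bigr),
\]
with $Z_1:=(E_\w S)(1-\tau)$ and $Z_2:=(E_\w F^{(1)})\bigl(N-(E_\w N)\tau\bigr)$. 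Here $Z_1,Z_2$ are $\sigma(\tau)$-measurable, while, since the family $(S,(F^{(j)})_{j\ge 1})$ is independent of $\tau$ -- hence of $N=\fl{c_\w\tau}$ -- conditioning on $\tau$ and using Wald's identity gives $E_\w[W\mid\tau]=0$. Consequently $W$ and $Z_1+Z_2$ are uncorrelated, so $\Var_\w(T_\nu-\b\tau)=\Var_\w(W)+\Var_\w(Z_1+Z_2)$.

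For the first term, conditioning on $N$ and using the (conditional) independence of $S$ and the $F^{(j)}$ gives $\Var_\w(W)=E_\w[\Var_\w(W\mid N)]=\Var_\w(S)+(E_\w N)\Var_\w(F^{(1)})$; on the other hand the standard formula for the variance of a random sum gives $\Var_\w(T_\nu)=\Var_\w(S)+(E_\w N)\Var_\w(F^{(1)})+(E_\w F^{(1)})^2\Var_\w(N)$. Subtracting, $\Var_\w(W)=\Var_\w(T_\nu)-(E_\w F^{(1)})^2\Var_\w(N)$, which is precisely the last two terms in \eqref{varcoupleineq}. It remains to bound $\Var_\w(Z_1+Z_2)=\Var_\w(Z_1)+2\Cov_\w(Z_1,Z_2)+\Var_\w(Z_2)$ by $(E_\w S)^2+(E_\w F^{(1)})^2/3$. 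Since $\Var_\w\tau=1$ one has $\Var_\w(Z_1)=(E_\w S)^2$, so it suffices to prove $\Cov_\w(Z_1,Z_2)\le 0$ and $\Var_\w(Z_2)\le (E_\w F^{(1)})^2/3$.

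These last two inequalities are explicit scalar computations based on the coupling $N=\fl{c_\w\tau}$ with $c_\w=-1/\log(1-p_\w)$, which gives $e^{-1/c_\w}=1-p_\w=:q$ with $\tau$ a mean-$1$ exponential. Summing geometric series, one obtains $E_\w[\tau N]=q(1/c_\w+p_\w)/p_\w^2$ and $\Cov_\w(\tau,N)=q/(c_\w p_\w^2)$. Hence $\Cov_\w(Z_1,Z_2)=(E_\w S)(E_\w F^{(1)})\bigl(2E_\w N-E_\w[\tau N]\bigr)=(E_\w S)(E_\w F^{(1)})\,q\bigl(p_\w-1/c_\w\bigr)/p_\w^2\le 0$, because $1/c_\w=-\log q\ge 1-q=p_\w$ for $q\in(0,1)$. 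Likewise $\Var_\w(Z_2)=(E_\w F^{(1)})^2\bigl(\Var_\w N+(E_\w N)^2-2(E_\w N)\Cov_\w(\tau,N)\bigr)$, and substituting the values above reduces this to $(E_\w F^{(1)})^2\,q(1-q^2+2q\log q)/(1-q)^3$; the required bound becomes $h(q):=(1-q)^3-3q(1-q^2+2q\log q)\ge 0$ on $(0,1)$, which follows from $h''(q)=12(q-\log q-1)\ge 0$ together with $h'(1)=h(1)=0$. Combining the three estimates yields \eqref{varcoupleineq}. The only genuinely fiddly part is this last one-variable computation (and it is tight: the bound $1/3$ is attained in the limit $p_\w\to 0$, reflecting the near-uniformity of $\{c_\w\tau\}$ for deep traps); the conceptual step is the orthogonal decomposition $T_\nu-\b\tau=W+(Z_1+Z_2)$, which confines all the remaining randomness to the explicit variable $\tau$, where it can be controlled by hand.
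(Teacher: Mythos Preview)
Your proof is correct and follows essentially the same route as the paper: both arguments split $\Var_\w(T_\nu-\b\tau)$ into a ``fluctuation'' piece equal to $\Var_\w(T_\nu)-(E_\w F^{(1)})^2\Var_\w(N)$ and a $\sigma(\tau)$-measurable remainder, and then bound the remainder by $(E_\w S)^2+(E_\w F^{(1)})^2/3$ via an explicit one-variable inequality. The differences are organizational. You make the orthogonal decomposition $T_\nu-\b\tau=W+(Z_1+Z_2)$ explicit and read off $\Var_\w(W)$ directly, whereas the paper first peels off $S$ by independence and then conditions on $\tau$; your $\Var_\w(Z_1+Z_2)$ is exactly the paper's $\Var_\w((E_\w F^{(1)})N-\b\tau)$, since $Z_1+Z_2$ differs from that quantity by the constant $E_\w S$. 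For the scalar bound, the paper exploits the memorylessness fact that $\lfloor c_\w\tau\rfloor$ and $c_\w\tau-\lfloor c_\w\tau\rfloor$ are independent, then verifies the two needed inequalities via Taylor expansions of $\log(1-p)$; you instead compute $E_\w[\tau N]$ by summing $\sum_{k\ge 1}E_\w[\tau\ind{\tau\ge k/c_\w}]$ and finish with the convexity argument $h''\ge 0$, $h'(1)=h(1)=0$. Both methods yield the same inequalities (and both are sharp as $p_\w\to 0$), so this is the same proof with a slightly cleaner packaging on your side.
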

\begin{proof}
First of all, note that
\begin{align}
& \Var_\w(T_\nu - \b \tau) 
= \Var_\w\left( S + \sum_{j=1}^N F^{(j)} - \b \tau \right) \nonumber \\
&\quad= \Var_\w( S) + \Var_\w\left( \sum_{j=1}^N F^{(j)} - \b \tau \right)  \nonumber \\
&\quad= \Var_\w( S)  + \Var_\w( F^{(1)} ) \left( E_\w \fl{c_\w \tau} \right) + \Var_\w\left(\fl{c_\w \tau} (E_\w F^{(1)}) - \b \tau \right) \label{couple3var}.
\end{align}
Since $\fl{c_\w \tau}$ is independent of $c_\w \tau-\fl{c_\w \tau}$, we
can use the identity for $\b$ in \eqref{beq} to write, with the help
of a bit of algebra,  
\begin{align*}
 \Var_\w\left(\fl{c_\w \tau} (E_\w F^{(1)}) - \b \tau \right) 
&= (E_\w F^{(1)})^2 \Var_\w\left(\fl{c_\w \tau} \right)  + \b^2 - 2 (E_\w F^{(1)})\b \Cov\left( \fl{c_\w \tau} , \tau \right) \\
&= \Bigl( (E_\w F^{(1)})^2 -2(E_\w F^{(1)})\b/c_\w\Bigr)
 \Var_\w\left(\fl{c_\w \tau}\right) + \b^2 \\
&= ( E_\w S)^2 + 2(E_\w S)(E_\w F^{(1)}) \frac{1-p_\w}{p_\w^2} \left( p_\w + \log(1-p_\w) \right) \\
& \qquad + (E_\w F^{(1)})^2 \frac{1-p_\w}{p_\w^2} \left(2-p_\w + 2 \frac{1-p_\w}{p_\w} \log(1-p_\w) \right).  
\end{align*}
Using a Taylor series expansion of $\log(1-p)$ for $|p|<1$, one can show that for any $p\in[0,1)$,
\[
 p+\log(1-p) = - \sum_{k=2}^{\infty} \frac{p^k}{k} \leq 0,
\]
and
\[
 \frac{1-p}{p^2}\left( 2-p + 2 \frac{1-p}{p} \log(1-p) \right) = 1/3 - \sum_{k=1}^\infty \frac{4 p^k}{(k+1)(k+2)(k+3)} \leq \frac{1}{3}.
\]
Therefore, 
\[
 \Var_\w\left(\fl{c_\w \tau} (E_\w F^{(j)}) - \b \tau \right) \leq ( E_\w S)^2 + \frac{(E_\w F^{(1)})^2}{3}.
\]
Recalling \eqref{couple3var}, we obtain that
\begin{align*}
 \Var_\w(T_\nu - \b \tau) &\leq \Var_\w( S) + ( E_\w S)^2 + \frac{(E_\w F^{(1)})^2}{3} + \Var_\w( F^{(1)} ) \left( E_\w \fl{c_\w \tau} \right) 
\end{align*}
Since \eqref{couplingsum} implies that
\[
 \Var_\w(T_\nu) = \Var_\w (S) + \Var_\w\left( \sum_{i=1}^{N} F^{(i)} \right) = \Var_\w (S) + (E_\w F^{(1)})^2 \Var_\w (N) + \Var_\w (F^{(1)}) (E_\w N),
\]
the bound  \eqref{varcoupleineq} follows. 
\end{proof}

The utility of the upper bound in Lemma \ref{varcouplelem} is that $E_\w F^{(1)}$ and $E_\w S$ are relatively small when $M_1$ is large. 
\begin{lem}\label{SFtails}
 For $0<\e<1$, 
\be\label{Stail}
 Q\left( E_\w S > n^{6\e/\k}, \, M_1 > n^{(1-\e)/\k} \right) = o(n^{-1}),
\ee
and
\be\label{Ftail}
 Q\left( E_\w F^{(1)} > n^{6\e/\k}, \, M_1 > n^{(1-\e)/\k} \right) = o(n^{-1}).
\ee
\end{lem}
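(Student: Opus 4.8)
The plan is to prove both tail bounds in Lemma~\ref{SFtails} by exploiting the explicit formulas for quenched hitting-time expectations together with the fact that the event $\{M_1 > n^{(1-\e)/\k}\}$ forces the relevant potential to have a single dominant ``spike,'' which makes $E_\w S$ and $E_\w F^{(1)}$ small relative to that spike. First I would recall that, by \eqref{QETform}, for a random walk started at $0$ the quantity $E_\w F^{(1)} = E_\w(T_0^+ \mid T_0^+ < T_\nu)$ and $E_\w S = E_\w(T_\nu \mid T_\nu < T_0^+)$ can be written in terms of the quantities $\Pi_{i,j}$, $R_{i,j}$, and $W_{i,j}$ restricted to the block $[0,\nu)$; in particular each is bounded by a polynomial (in $\nu$) multiple of $\max_{0\le i\le j<\nu}\Pi_{i,j}/\Pi_{0,j^\ast}$-type ratios, where $j^\ast$ is an index achieving $M_1$. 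The key structural point, already used in \cite{pzSL1}, is that conditioning on reaching $\nu$ before returning to $0$ effectively ``pushes past'' the big trap at $j^\ast$, so the conditional expectations no longer see the factor $M_1$ and instead only see the environment on a shorter scale.

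The main steps I would carry out are: (i) derive an explicit (or near-explicit) upper bound of the form $E_\w F^{(1)} \le C \nu^2 \max_{0\le k<\nu} R_{k,\nu-1}$ and $E_\w S \le C\nu^2 \max_{0\le k<\nu} W_{0,k}$, or something morally equivalent, using \eqref{HPform}--\eqref{QETform}; (ii) observe that on $\{M_1 > n^{(1-\e)/\k}\}$ with $E_\w F^{(1)} > n^{6\e/\k}$ (or $E_\w S > n^{6\e/\k}$), we are forced into an event where either $\nu$ is atypically large (which costs $o(n^{-1})$ by the exponential tail \eqref{nutail}, since $\nu > n^{\delta}$ has probability $\le C'e^{-Cn^\delta}$) or there is a second, smaller but still moderately large, potential fluctuation inside the block in addition to the spike contributing $M_1$; (iii) estimate the latter event by a union bound over the location and size of the secondary fluctuation, using the i.i.d.\ structure of the environment together with the power-law tail \eqref{ETnutail} (equivalently, the tail of the block maximum $M_1$, cf.\ \cite[Lemma 3.1]{pzSL1}), to show that the probability of two such fluctuations coexisting in a block of polynomial length is $o(n^{-1})$. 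Splitting on $\nu \le n^{\e/\k}$ versus $\nu > n^{\e/\k}$ and choosing the exponents so that $6\e/\k$ dominates any polynomial-in-$\nu$ loss is what makes the bookkeeping work.

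The hard part will be step (iii): making precise the claim that, given $M_1 > n^{(1-\e)/\k}$, the conditional expectations $E_\w F^{(1)}$ and $E_\w S$ are small unless there is a ``second large trap,'' and then controlling the probability of a second large trap sharply enough to beat $n^{-1}$. This requires carefully tracking which sums $R_{k,\nu-1}$, $W_{0,k}$ actually appear after conditioning, and recognizing that the single dominant $\Pi_{0,j^\ast}$ does not enter them (so the remaining sum is over a ``trap-free'' or ``small-trap'' environment on the rest of the block). The exponential decay of the block length in \eqref{nutail} gives a lot of room, so I expect that generous polynomial bounds in $\nu$ combined with a crude union bound over $O(\nu^2)$ pairs of sites, each contributing a probability governed by the tail in \eqref{ETnutail}, will suffice — but verifying that the product of ``two traps, each of size at least $n^{c\e/\k}$'' has probability $o(n^{-1})$ is where one must be slightly careful about the exact exponents. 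One would then note that the bound for $E_\w S$ in \eqref{Stail} is in fact easier than \eqref{Ftail}, since $S$ is the time to cross a single conditioned block and is dominated by $E_\w F^{(1)}$-type quantities, so the same argument applies verbatim with minor notational changes.
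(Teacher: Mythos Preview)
Your overall strategy---the conditioning on $\{T_\nu<T_0^+\}$ (or $\{T_0^+<T_\nu\}$) ``pushes past'' the dominant spike at $j^\ast$, leaving only secondary fluctuations of the potential inside the block, whose probability on $\{M_1>n^{(1-\e)/\k}\}$ is then controlled---is exactly the mechanism the paper uses. But the concrete upper bounds you propose in step~(i),
\[
E_\w F^{(1)} \le C\nu^2 \max_{0\le k<\nu} R_{k,\nu-1}
\quad\text{and}\quad
E_\w S \le C\nu^2 \max_{0\le k<\nu} W_{0,k},
\]
do \emph{not} implement that idea: the maxima on the right-hand sides still contain the spike (e.g.\ $R_{0,\nu-1}$ is of order $\beta_1$ and hence of order $M_1$), so these bounds are useless on the event $\{M_1>n^{(1-\e)/\k}\}$. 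You would have to carry out the $h$-transform computation in earnest to see which products of the $\rho$'s actually survive after conditioning; only then does $M_1$ drop out.

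The paper makes this precise by introducing $i_0=\max\{i\in[1,\nu]:\Pi_{0,i-1}=M_1\}$ and the quantities
\[
M^- = \min\{\Pi_{i,j}: 0<i\le j<i_0\}\wedge 1,
\qquad
M^+ = \max\{\Pi_{i,j}: i_0<i\le j<\nu\}\vee 1,
\]
which are exactly the ``secondary fluctuations'' on either side of the spike. The $h$-transform calculation then yields bounds of the form $E_\w S\le 3\nu^4 M^+/(M^-)^3$ and (for the excursion to the right inside $F^{(1)}$) $E_\w^1[T_0\mid T_0<T_\nu]\le 3\nu^4 (M^+)^3/M^-$, with $M_1$ completely absent. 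Your step~(iii) is then the content of the key imported result, Lemma~4.1 of \cite{p1LSL2}, which gives $Q(M^+>n^\delta,\,M_1>n^{(1-\e)/\k})=o(n^{-1+\e-\d\k+\e'})$ and similarly for $M^-<n^{-\delta}$; combined with the exponential tail of $\nu$ this gives the result directly, without the union bound over pairs of sites that you sketch. Finally, you have the relative difficulty reversed: \eqref{Stail} was already done in \cite{p1LSL2} (Corollary~4.2), and \eqref{Ftail} is the new (but analogous) part, requiring the additional observation that the excursion to the left from $0$ contributes only $2+2W_{-1}$, which has exponential tails under $Q$.
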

The bound \eqref{Stail} on the tail decay of $E_\w S$ was proved in \cite[Corollary 4.2]{pzSL1}. The proof of \eqref{Ftail} is similar and involves straightforward but rather tedious computations using explicit formulas for quenched expectations and variances of hitting times conditioned on exiting an interval on a certain side. We defer the proof to Appendix \ref{details}. 

We now proceed to finish the proof of Lemma \ref{Tnun2bt} by extending
the coupling of $T_\nu$ with $\tau$ to all crossing times and showing
that the resulting coupling satisfies \eqref{e:req.couple}. 
As was done for $T_\nu$ in \eqref{couplingsum} we may decompose
$T_{\nu_i} - T_{\nu_{i-1}}$ so that, with the obvious notation, 
\[
 T_{\nu_i} - T_{\nu_{i-1}} = S_i + \sum_{j=1}^{N_i} F_i^{(j)}. 
\]
Lemma \ref{varcouplelem} tells us that 
\begin{align*}
 \sum_{i=1}^n&\Var_\w\left(T_{\nu_i} - T_{\nu_{i-1}} - \b_i \tau_i\right)\ind{M_i> n^{(1-\e)/\k}} \\
& \leq \sum_{i=1}^n \left( (E_\w S_i)^2 + \frac{(E_\w F_i^{(1)})^2}{3} + \Var_\w(T_{\nu_i}-T_{\nu_{i-1}}) - (E_\w F_i^{(1)})^2 \Var_\w (N_i) \right)\ind{M_i> n^{(1-\e)/\k}}. 
\end{align*}
An immediate consequence of Lemma \ref{SFtails} is that for any
$0<\e<1$, on an event of probability converging to one, all the $E_\w
S_i$ and $E_\w F_i^{(1)}$ with $i\leq n$ are less than $n^{6 \e/\k}$
when $M_1 > n^{(1-\e)/\k}$. Thus, by choosing $0<12\e/\k < 2/\k - 1$
we obtain that  
\[
 \lim_{n\ra\infty} Q\left( \frac{1}{n^{2/\k}} \sum_{i=1}^n \left( (E_\w S_i)^2 + \frac{(E_\w F_i^{(1)})^2}{3} \right)\ind{M_i> n^{(1-\e)/\k}} > \d \right) = 0, \quad \forall \d>0. 
\]
Therefore, to prove \eqref{e:req.couple} it is enough to show
\be\label{VarTnuEFVN}
 \lim_{n\ra\infty} Q\left( \frac{1}{n^{2/\k}} \sum_{i=1}^n \left( \Var_\w(T_{\nu_i}-T_{\nu_{i-1}}) - (E_\w F_i^{(1)})^2 \Var_\w N_i  \right)\ind{M_i> n^{(1-\e)/\k}} > \d \right) = 0, \quad \forall \d>0. 
\ee
In \cite{pzSL1}, it was shown that, when $M_1$ is large, $\b_1^2 =
(E_\w T_\nu)^2$ is comparable to $\Var_\w T_{\nu_1}$. In fact, as was
shown in the proof of Corollary 5.6 in \cite{pzSL1},  
\[
 \lim_{n\ra\infty} Q\left( n^{-2/\k} \left| \sum_{i=1}^n \left( \Var_\w (T_{\nu_i} - T_{\nu_{i-1}}) - \b_i^2 \right) \ind{M_i > n^{(1-\e)/\k}} \right| > \d \right) = 0, \quad \forall\d>0.  
\]
Therefore it only remains to show that
\[
 \lim_{n\ra\infty} Q\left( n^{-2/\k}  \sum_{i=1}^n \left( \b_i^2 - (E_\w F_i^{(1)})^2 \Var_\w (N_i) \right)\ind{M_i> n^{(1-\e)/\k}} > \d \right) = 0, \quad \forall\d>0. 
\]
Note that by \eqref{beq} 
\begin{align*}
 \b^2 - (E_\w F^{(1)})^2 \Var_\w (N) &= (E_\w S)^2 + 2 (E_\w S)(E_\w
 F^{(1)})(E_\w N)  - (E_\w F^{(1)})^2(E_\w N^2) \\ 
&\leq (E_\w S)^2 + 2 (E_\w S)(E_\w T_\nu).
\end{align*}
On the event where $E_\w S_i \leq n^{6\e/\k}$ for all $i\leq n$ with
$M_i> n^{(1-\e)/\k}$ we have 
\begin{align*}
\sum_{i=1}^n \left( \b_i^2 - (E_\w F_i^{(1)})^2 \Var_\w (N_i) \right)
\ind{M_i> n^{(1-\e)/\k}}  
& \leq n^{1+12\e/\k}+2n^{6\e/\k}\sum_{i=1}^n E_\w^{\nu_{i-1}} T_{\nu_i} \\
&= n^{1+12\e/\k}+2n^{6\e/\k}E_\w T_{\nu_n}. 
\end{align*}
Again, applying Lemma \ref{SFtails} with $0<12\e/\k < 2/\k -1$, we
see that for any $\d>0$, 
\begin{align*}
& \limsup_{n\ra\infty} Q\left( n^{-2/\k} \sum_{i=1}^n \left( \b_i^2 -
  (E_\w F_i^{(1)})^2 \Var_\w N_i \right) \ind{M_i> n^{(1-\e)/\k}} > \d \right) \\
&\quad\leq \limsup_{n\ra\infty} Q\left( n^{-2/\k+6\e/\k} E_\w
  T_{\nu_n} > \frac{\d}{2} \right),  
\end{align*}
and so the proof will be complete once we show 
that $n^{-2/\k + \e} E_\w T_{\nu_n} = n^{-2/\k+\e} \sum_{i=1}^n \b_i$
converges in probability to 0 for $\e > 0$ small enough.  

If $\k<1$, then since $n^{-1/\k} E_\w T_{\nu_n}$ converges in distribution \cite[Theorem 1.1]{pzSL1}, choosing $\e<1/\k$ works. If $\k>1$ then since 
$E_\w T_{\nu_n} = \sum_{i=1}^n \b_i$ and the $\b_i$ are stationary and
integrable under $Q$ (see \eqref{ETnutail}), the ergodic theorem
implies that $n^{-1}E_\w T_{\nu_n}$ converges and, hence, choosing   
$\e < 2/\k -1$ works. Finally, when $\k=1$ it follows from
\eqref{ETnutail} that for any $0<p<1$, $E_Q(\sum_{i=1}^n \b_i)^p\leq
a_p n$ for some $a_p\in (0,\infty)$, so choosing $\e<1$ works. 
\end{proof}

We conclude this section by noting that with a few minor modifications
of the proof of Proposition \ref{bigtransfer} we can obtain the
following analog in the case of the averaged centering.  
\begin{prop}\label{bigtransferA}
  For $\w \in \Omega$, suppose that $P_\w$ is expanded so that there
 exists a sequence $\tau_i$ which, under $P_\w$, is an 
 i.i.d.\ sequence of mean 1 exponential random variables. Let
 $\s_{n,\w} \in \mathcal{M}_1$ be defined by 
\be\label{sdefA}
 \s_{n,\w}(\cdot) = 
\begin{cases}
 P_\w\left( \frac{1}{n^{1/\k}} \sum_{i=1}^n \b_i \tau_i \in \cdot \right) & \k < 1 \\
P_\w\left( \frac{1}{n} \sum_{i=1}^n (\b_i \tau_i - D'(n)) \in \cdot \right) & \k = 1 \\
P_\w\left( \frac{1}{n^{1/\k}} \sum_{i=1}^n (\b_i \tau_i - \bar\b) \in \cdot \right) & \k \in (1,2), 
\end{cases}
\ee
where $D'(n) = E_Q[ \b_1 \ind{\b_1 \leq \bar\nu n} ] \sim \tc \log (n)$ and $\bar\b = E_Q [\b_1] = E_Q [E_\w T_\nu]$. 
Let $c_{\l,\k}(\e)$ be as in Theorem \ref{wqlTnA}, and set
$\tilde{c}_{\l,1}(\e) = \int_\e^{\bar\nu} \l x^{-1} \, dx =
c_{\l,1}(\e) + \l \log(\bar\nu)$. If 
\[
 \s_{n,\w} \overset{Q}{\Lra}
\begin{cases}
 H(N_{\l,\k}) & \kappa < 1 \\
 \lim_{\e\ra 0^+} H_\e(N_{\l,1}) * \d_{-\tilde{c}_{\l,1}(\e)} & \kappa = 1 \\
 \lim_{\e\ra 0^+} H_\e(N_{\l,\k}) * \d_{-c_{\l,\k}(\e)} & \kappa \in(1,2)
\end{cases}
, 
\]
then
\[
\mu_{n,\w} \overset{P}{\Lra}
\begin{cases}
 H(N_{\l/{\bar\nu},\k}) & \kappa < 1 \\
 \lim_{\e\ra 0^+} H_\e(N_{\l/{\bar{\nu}},\k}) * \d_{-c_{\l,\k}(\e)} & \kappa \in[1,2)
\end{cases}
,
\]
where ${\mu}_{n,\w}$ is as in Theorem \ref{wqlTnA}.
\end{prop}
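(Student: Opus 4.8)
The plan is to follow exactly the template of Proposition \ref{bigtransfer}, which handled the quenched-centering case, and adapt each of the three reduction lemmas (Lemma \ref{P2Q}, Lemma \ref{Tn2Tnun}, Lemma \ref{Tnun2bt}) to the averaged centering. The decomposition into steps is the same: first pass from $P$ to $Q$, then from $T_n$ to $T_{\nu_n}$, then from $T_{\nu_n}$ to the pure exponential mixture $\sum_i \b_i\tau_i$, each time verifying a coupling condition of the form \eqref{meancouple} or \eqref{varcouple} so that Lemma \ref{GeneralMethod} or Corollary \ref{GeneralMethodcor} applies. What changes is the bookkeeping of the deterministic centering terms, because now the three regimes $\k<1$, $\k=1$, $\k\in(1,2)$ must be treated separately, and the centering in Theorem \ref{wqlTnA} is the averaged one ($0$, $nD(n)$, $n/\vp$) rather than the quenched mean $E_\w T_n$.

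First, the $P$-to-$Q$ step (analog of Lemma \ref{P2Q}) goes through essentially verbatim: the same coupling of the two environments agreeing on the nonnegative integers and the same coupling of the two random walks via shared uniforms $\bar\xi$ gives $|T_n-T_n'|=|L_n-L_n'|$ with $\sup_n E_{\w,\w'}|T_n-T_n'|<\infty$, $\Pv$-a.s.; since now there is no random centering to match, condition \eqref{meancouple} for the marginals of the appropriate $\theta_n$ on $\R^2$ follows immediately after dividing by the scaling $n^{1/\k}$ (or $n$ when $\k=1$). Second, for the $T_n$-to-$T_{\nu_n}$ step (analog of Lemma \ref{Tn2Tnun}) one again compares $T_n$ with $T_{\nu_{\nc n}}$; the variance bound \eqref{vTnTnucn} is unchanged, but one must also check that the difference of the deterministic centerings is negligible after scaling --- i.e. that $(n/\vp - \nc n \bar\nu /\vp)/n^{1/\k}\to 0$ in the $\k\in(1,2)$ case, that $(nD(n)-\text{(centering of }T_{\nu_{\nc n}}))/n\to 0$ in the $\k=1$ case where one has to reconcile $D(n)\sim A\log n$ with $D'(n)\sim\tc\log n$, and that the $\k<1$ case needs no centering at all --- together with the scaling-change identities $H(N_{\l,\k})(b\,\cdot)\overset{Law}{=}H(N_{\l/b^\k,\k})(\cdot)$ and the analogous identities for $H_\e$ convolved with the shifts $\d_{-c_{\l,\k}(\e)}$. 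Third, the $T_{\nu_n}$-to-$\sum\b_i\tau_i$ step (analog of Lemma \ref{Tnun2bt}) uses the identical coupling \eqref{couplingsum}--\eqref{couplingN} of each crossing time with an exponential via $N_i=\fl{c_\w\tau_i}$, the same variance estimate of Lemma \ref{varcouplelem}, and Lemma \ref{SFtails}; the truncation argument controlling $\sum \Var_\w(T_{\nu_i}-T_{\nu_{i-1}}-\b_i\tau_i)\ind{M_i>n^{(1-\e)/\k}}$ is literally the same computation, and the only new point is that in the $\k=1$ (and $\k\in(1,2)$) case one is now working with the un-recentered sums $\sum\b_i\tau_i$, so one must additionally verify that the mean discrepancy between $\sum_i(\b_i\tau_i-D'(n))$ (resp. $-\bar\b$) and the corresponding quantity for $T_{\nu_n}$ vanishes after scaling, which reduces to the already-established convergence of $n^{-1/\k}E_\w T_{\nu_n}$ (or $n^{-1}E_\w T_{\nu_n}$) and the definition of $D'(n)$ as the truncated mean.

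I expect the main obstacle to be the $\k=1$ regime. There the centering is only logarithmically large, there is no absolute integrability, and one must carefully match the averaged centering $nD(n)$ (with $D(n)\sim A\log n$) appearing in Theorem \ref{kksStable}/\ref{wqlTnA} against the truncated-mean centering $D'(n)=E_Q[\b_1\ind{\b_1\le\bar\nu n}]\sim\tc\log n$ and against $\tilde c_{\l,1}(\e)=\int_\e^{\bar\nu}\l x^{-1}\,dx$, keeping track of the additive constant $\l\log\bar\nu$ that the change of scale by $\bar\nu$ introduces; the fact that these three descriptions of the logarithmic drift agree (up to $o(1)$ after scaling by $n$) is exactly what forces the relation between $A$, $\tc$, $\l$ and $\bar\nu$. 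Because the problem statement explicitly says this is obtained by "a few minor modifications" of the proof of Proposition \ref{bigtransfer}, in the write-up I would state the three modified reduction lemmas, note that Lemma \ref{P2Q} transfers with the centering removed, observe that the variance couplings in Lemmas \ref{Tn2Tnun} and \ref{Tnun2bt} are identical, and then concentrate the actual work on the scale-change identities for $H$, $H_\e$ and the shifts $\d_{-c_{\l,\k}(\e)}$ and on the $\k=1$ centering reconciliation, deferring the routine parts with a reference back to Section \ref{Transfer}.
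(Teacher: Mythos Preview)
Your plan is correct and is exactly the route the paper intends: since the paper's own ``proof'' of Proposition~\ref{bigtransferA} consists only of the sentence that it follows by minor modifications of Proposition~\ref{bigtransfer}, adapting Lemmas~\ref{P2Q}, \ref{Tn2Tnun}, and \ref{Tnun2bt} one by one is precisely what is being asked. Your identification of the $\k=1$ bookkeeping (matching $D(n)$, $D'(n)$, $\tilde c_{\l,1}(\e)$ and the $\l\log\bar\nu$ shift) as the place requiring the most care is also right.

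There is, however, one point in the analog of Lemma~\ref{Tn2Tnun} that you gloss over and that is not covered by ``the variance bound \eqref{vTnTnucn} is unchanged'' together with ``the difference of the deterministic centerings is negligible.'' In the quenched-centering proof the coupled pair had $E_\w(X-Y)=0$ by construction, so only the variance had to be checked. With the averaged centering this is no longer true: for $\k\in[1,2)$ one has
\[
E_\w(X-Y)=\frac{E_\w(T_n-T_{\nu_{\nc n}}) - (n/\vp - \lfloor\nc n\rfloor\bar\b)}{n^{1/\k}},
\]
and while the second term in the numerator is indeed $O(1)$ (since $\bar\b/\bar\nu=1/\vp$), the first term $E_\w(T_n-T_{\nu_{\nc n}})$ is an environment-dependent random variable that is not zero and is not controlled by the variance estimate. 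The $\e$-trick used in \eqref{svar1} does \emph{not} work here when $\k\ge 1$, because $E_\w T_{\nu_{\e n}}$ is of order $\e n$ (or $\e n\log n$ when $\k=1$), which is much larger than $n^{1/\k}$. What does work is the sharper observation that $|n-\nu_{\nc n}|=O_P(\sqrt{n})$ by the CLT for the ladder increments (which have exponential tails by \eqref{nutail}); then $E_\w(T_n-T_{\nu_{\nc n}})$ is bounded by a sum of $O_P(\sqrt{n})$ of the $\b_i$, which is $O_P(\sqrt{n})$ for $\k>1$ and $O_P(\sqrt{n}\log n)$ for $\k=1$, in either case $o_P(n^{1/\k})$ since $\k<2$. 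You should add this estimate to the second reduction step; everything else in your outline goes through as you describe.
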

\begin{rem}\label{DDprime}
 In the case $\kappa=1$, the relation between the sequences $D(n)$ and $D'(n)$ can be given by 
\[
 D(n) = \frac{\fl{n/\bar{\nu}}}{n} D'( \fl{n/\bar{\nu}}) = \frac{\fl{n/\bar{\nu}}}{n} E_Q\left[ \b_1 \ind{\b_1 \leq \bar{\nu}\fl{n/\bar{\nu}}} \right].  
\]
\end{rem}

\section{Analysis of the crossing times}\label{betaanalysis}

By Propositions \ref{bigtransfer} and \ref{bigtransferA}, our work is 
reduced to studying the distribution of a random mixture of
exponential random variables, where the random coefficients are the
average crossing times $\b_i = E_\w^{\nu_{i-1}} T_{\nu_i}$ in
\eqref{bdef}. The following proposition, which is the main result of
this section, establishes a Poisson limit of point processes arising from the random coefficients $\b_i$. 
\begin{prop}\label{PPconvergence}
For $n\geq 1$ let $N_{n,\w}$ be a point process defined by
\be\label{Nndef}
 N_{n,\w} = \sum_{i=1}^n \d_{\b_i/n^{1/\k}}. 
\ee
Then, under the measure $Q$, 
$N_{n,\w}$ converges weakly in the space $\mathcal{M}_p$ to a
non-homogeneous Poisson point process with intensity $\l
x^{-\k-1}$, where $\l=\tc \k$ and $\tc$ is the constant in \eqref{ETnutail}. 
That is, $N_{n,\w} \overset{Q}{\Lra} N_{\l,\k}$.  
\end{prop}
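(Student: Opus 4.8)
The plan is to establish the Poisson convergence of $N_{n,\w}$ under $Q$ by verifying Kallenberg's criterion for convergence to a simple Poisson point process (see \cite[Proposition 3.22]{rEVRVPP}). Since the limiting process $N_{\l,\k}$ is a simple Poisson process with mean measure $\mu((x,\infty]) = \int_x^\infty \l t^{-\k-1}\,dt = \frac{\l}{\k} x^{-\k} = \tc x^{-\k}$ finite for each $x>0$, it suffices to show that for a convergence-determining class of sets $A$ (for instance, finite unions of intervals $(a,b]$ with $0<a<b\leq\infty$ that are bounded away from $0$) we have both
\begin{equation*}
E_Q[ N_{n,\w}(A) ] \to \mu(A) \quad\text{and}\quad Q( N_{n,\w}(A) = 0 ) \to e^{-\mu(A)}.
\end{equation*}

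The first step is the computation of the mean. We have $E_Q[N_{n,\w}((x,\infty])] = \sum_{i=1}^n Q(\b_i > x n^{1/\k}) = n\, Q(\b_1 > x n^{1/\k})$ by stationarity of $(\b_i)$ under $Q$. Here $\b_1 = E_\w^{\nu_0} T_{\nu_1} = E_\w T_\nu$, so by the tail asymptotics \eqref{ETnutail}, $Q(\b_1 > x n^{1/\k}) \sim \tc (x n^{1/\k})^{-\k} = \tc x^{-\k} n^{-1}$ as $n\to\infty$, and therefore $E_Q[N_{n,\w}((x,\infty])] \to \tc x^{-\k} = \mu((x,\infty])$, which extends by additivity to all sets $A$ in our class. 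The second step is the Poisson-type asymptotic for the void probabilities. The natural route is via the inclusion-exclusion / factorial-moment approach: show that all the factorial moments of $N_{n,\w}(A)$ converge to those of a Poisson random variable with parameter $\mu(A)$. The $k$-th factorial moment is
\begin{equation*}
E_Q\Bigl[ N_{n,\w}(A)\bigl(N_{n,\w}(A)-1\bigr)\cdots\bigl(N_{n,\w}(A)-k+1\bigr)\Bigr] = \sum_{\substack{i_1,\ldots,i_k \text{ distinct}}} Q\Bigl( \b_{i_j}/n^{1/\k} \in A,\ j=1,\ldots,k \Bigr),
\end{equation*}
and one wants this to converge to $\mu(A)^k$. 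The number of terms is $\sim n^k$, so it is enough to show that, for distinct indices, $Q(\b_{i_1} > xn^{1/\k}, \ldots, \b_{i_k} > xn^{1/\k})$ is asymptotically a product $\prod_j Q(\b_{i_j} > x n^{1/\k}) \sim (\tc x^{-\k} n^{-1})^k$, up to negligible corrections. This asymptotic independence is the crux of the argument.

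The main obstacle is precisely this asymptotic independence of the large values of the crossing times $\b_i$ at widely separated (and also at nearby) ladder-location blocks. The crossing times $\b_i = E_\w^{\nu_{i-1}} T_{\nu_i}$ are built from the environment between consecutive ladder locations, and while the blocks $\{\w_x : \nu_{i-1}\leq x < \nu_i\}$ are i.i.d. under $Q$, the quantity $W_{\nu_{i-1}}$ appearing implicitly through $E_\w^{\nu_{i-1}}T_{\nu_i} = \sum_{j=\nu_{i-1}}^{\nu_i - 1}(1 + 2W_j)$ has a contribution $W_{\nu_{i-1}}$ that depends on the environment to the left of $\nu_{i-1}$, creating dependence between the $\b_i$. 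The standard remedy, following \cite{pzSL1,p1LSL2}, is to compare $\b_i$ with a truncated/localized version $\tilde\b_i$ depending only on a bounded window of the environment around the $i$-th block (or on the single block itself, using the i.i.d. block structure and the rapid decay of the $\Pi$ products), show that the truncation error is negligible at scale $n^{1/\k}$ with $Q$-probability tending to one — this uses the exponential tails \eqref{nutail} of $\nu$ together with the fact that the dominant contribution to a large $\b_i$ comes from a single large "trap" $M_i = \max\{\Pi_{\nu_{i-1},j}\}$ sitting inside the $i$-th block, comparable to $\b_i$ (as recalled after \eqref{Mdef}) and i.i.d. across blocks — and then invoke the classical Poisson convergence of $\sum_{i=1}^n \d_{M_i/n^{1/\k}}$ (or of an $M_i$-based surrogate), which is a standard i.i.d. heavy-tailed point-process limit \cite{rEVRVPP}. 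I would first carry out the mean computation, then set up the block-localization of $\b_i$ and quantify the error using the tail bounds already available from \cite{pzSL1} (in the same spirit as Lemma \ref{SFtails}), then transfer the Poisson limit for the localized/i.i.d. surrogate to $N_{n,\w}$ via the vague-topology continuity of the relevant functionals together with the negligibility of the error, finally reading off $\l = \tc\k$ from the mean measure $\l x^{-\k-1}$ matching $\tc x^{-\k}$ for the tail.
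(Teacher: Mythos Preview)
Your outline is essentially the paper's strategy: diagnose the dependence in $(\b_i)$ as coming from the left-tail piece $W_{\nu_{i-1}-1}$, replace $\b_i$ by a localized surrogate, prove the Poisson limit for the surrogate, and transfer back. The paper's implementation differs in a couple of details worth knowing. Rather than Kallenberg's criterion via factorial moments, the paper works with Laplace functionals against Lipschitz test functions; the surrogate is $\b_i^{(n)}=A_i^{(n)}Z_i+Y_i$ with $A_i^{(n)}=W_{\nu_{i-1}-\lfloor\sqrt{n}\rfloor,\nu_{i-1}-1}$, which makes $(\b_i^{(n)})$ only $\sqrt{n}$-dependent (not fixed-$m$-dependent) while preserving the exact tail $\sim\tc x^{-\k}$, so no tail-matching step is needed. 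The Poisson limit for this growing-range sequence is then obtained by a big-blocks/small-blocks scheme, the crucial ingredient being the anti-clustering estimate that two $\b_i^{(n)}$ within distance $\tau n$ of each other can both exceed $\d n^{1/\k}$ only with vanishing probability --- this is exactly the nearby-indices issue you flag, and the paper proves it by writing $\b_i^{(n)}\leq (A_i^{(n)}+1)\max(Y_i,Z_i)$ with $(Y_i,Z_i)$ i.i.d.\ heavy-tailed and $A_i^{(n)}$ exponentially bounded. Your alternative of routing through the genuinely i.i.d.\ sequence $M_i$ is plausible, but be aware that it does not immediately deliver the constant $\l=\tc\k$: you would still need to show that $\b_i$ and a suitable functional of $M_i$ agree to order $o(n^{1/\k})$ on the events $\{\b_i>\e n^{1/\k}\}$, which is an extra step the paper's direct truncation of $A_i$ avoids.
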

\begin{proof}
For a point process $\zeta = \sum_{i\geq 1} \d_{x_i}\in \mathcal{M}_p$
and a function $f:\, (0,\infty] \ra \R_+$, define the Laplace
 functional $\zeta(f) = \sum_{i\geq 1} f(x_i)$.  Since the weak
 convergence in the space $\mathcal{M}_p$ is equivalent to convergence
 of the Laplace functionals evaluated at all continuous functions with
 compact support of the type $[\delta,\infty]$ for some $\delta>0$
 (see Proposition 3.19 in \cite{rEVRVPP}), the 
 statement of the proposition will follow once we check that for any such
 $f$ 
\be \label{e:Laplace.conv}
 \lim_{n\ra\infty} E_Q\left[ e^{-N_{n,\w}(f) } \right] = \exp
 \left\{ - \int_0^\infty  (1-e^{-f(x)}) \l  x^{-\k-1} \, dx \right\}.   
\ee
\begin{rem} \label{rk:lipshitz}
An inspection of the argument of Propositions 3.16 and 3.19 in \cite{rEVRVPP} reveals
that the convergence in \eqref{e:Laplace.conv} for all continuous
functions with  compact support as above will follow once it is
checked for such functions that are, in addition, Lipschitz
continuous on $(0,\infty)$. 
\end{rem}

Recall from \eqref{ETnutail} that $Q(\b_1 > x) \sim \tc x^{-\k}$. Thus, if the $(\b_i)$ were i.i.d., the conclusion of
the proposition would follow immediately; see e.g. Proposition 3.21 in
\cite{rEVRVPP}. Since the sequence $(\b_i)$ is only stationary under
$Q$, our strategy is to show that the dependence between the $(\b_i)$ is
weak enough so that the point process $N_{n,\w}$ converges weakly to
the same limit as if the $(\b_i)$ were i.i.d.  

Recalling the notation in \eqref{pirwdef} and \eqref{rwdef} and the formula for quenched expectations of hitting times in \eqref{QETform}, we may write 
\begin{align*}
 \b_i = E_\w^{\nu_{i-1}}T_{\nu_i} &= \nu_i - \nu_{i-1} + 2 \sum_{j=\nu_{i-1}}^{\nu_i - 1} W_j \\
&= \nu_i - \nu_{i-1} + 2\sum_{j=\nu_{i-1}}^{\nu_i - 1} W_{\nu_{i-1},j} + 2W_{\nu_{i-1}-1} R_{\nu_{i-1},\nu_{i}-1}. 
\end{align*}
Thus, $\b_i = A_i Z_i + Y_i$, where
\[
 A_i = W_{\nu_{i-1}-1}, \quad Z_i = 2 R_{\nu_{i-1},\nu_i-1}, \quad \text{and}\quad Y_i = \nu_i - \nu_{i-1} + 2\sum_{j=\nu_{i-1}}^{\nu_i - 1} W_{\nu_{i-1},j} .
\]
Note that $Y_i$ and $Z_i$ only depend on the environment from
$\nu_{i-1}$ to $\nu_i - 1$, and therefore $\{(Y_i,Z_i)\}_{i\geq 1}$ is
an i.i.d.\ sequence of random variables with the same distribution as 
$$(Y_1,Z_1) = (\nu + 2 \sum_{j=0}^{\nu-1} W_{0,j}, \, 2 R_{0,\nu-1}).$$ Also,
note that the sequence $\{A_i\}_{i\geq 1}$ is stationary under the
measure $Q$.  From this decomposition of $\b_i$ we can see that 
the reason $(\b_i)$ is not an i.i.d.\ sequence is that the sequence
$(A_i)$ is not i.i.d.  The random variables $(A_i)$ all have the same
distribution under $Q$ as $A_1 = W_{-1}$. Furthermore, $W_{-1}$ has
exponential tails under $Q$. That is, there exist constants $C,C'>0$
such that 
\be\label{Wtail}
 Q(W_{-1} > x) \leq C' e^{-C x}; 
\ee
see Lemma 4.2.2 in \cite{pThesis}. In addition, $W_{-1}$ can be very
well approximated  by $W_{-j,-1}$ for large  $j$. That is, there exist
constants  $C_1, C_2, C_3>0$    such that  for every $j=1,2,\ldots$, 
\be\label{Wcutoff}
 Q( W_{-1} -  W_{-j,-1} > e^{-C_1 j} ) \leq C_2 e^{-C_3 j}. 
\ee
To see this, defining the ladder locations $\nu_{-k}$ to the left of the origin in the natural way (see \cite{pzSL1}), observe that for any $c>0$, 
\begin{align*}
 Q( W_{-1} - W_{\nu_{-k},-1} > e^{-c k} ) &\leq e^{ck} E_Q[ W_{-1} - W_{\nu_{-k},-1} ] \\
&= e^{ck} E_Q[ \Pi_{\nu_{-k},-1} W_{\nu_{-k}-1} ] = e^{ck} E_Q[ \Pi_{0,\nu-1} ]^k E_Q[ W_1]. 
\end{align*}
Since $E_Q[\Pi_{0,\nu-1}] < 1$ by the definition of the ladder
locations, choosing $c$ small enough gives us an exponential bound $Q( W_{-1} -
W_{\nu_{-k},-1} > e^{-c k} ) \leq C' e^{-C k}$, $k=1,2,\ldots$ for
some positive $C,C'$. The bound \eqref{Wcutoff} now follows by
writing, for $a>0$, 
$$
Q( W_{-1} -  W_{-j,-1} > e^{-c j} ) 
\leq Q( W_{-1} -  W_{-\nu_{aj},-1} > e^{-c j} ) 
+ Q(\nu_{aj}>j),
$$
and noticing that, by \eqref{nutail}, for $a>0$ small enough, the
latter probability is exponentially small as a function of $j$.


Keeping the exponential bounds \eqref{Wtail} and \eqref{Wcutoff} in
mind, we modify the sequence of the crossing times in order to reduce
the dependence. For $n\geq 1$ we set $A_i^{(n)} =
W_{\nu_{i-1}-\fl{\sqrt{n}},\nu_{i-1}-1}$ and 
$\b_i^{(n)} = A_i^{(n)} Z_i + Y_i$, $i=1,2,\ldots$. Notice that 
$\b_i^{(n)}$ and $\b_j^{(n)}$ are independent if $|i-j| >
\sqrt{n}$. Next, we give a comparison of $\b_i^{(n)}$ with $\b_i$ that will allow us to analyze the 
tail behaviour of the random
variables $(\b_i^{(n)})$. 
\begin{lem}\label{bcutoff}
There exist constants, $C,C'>0$ such that
\[
 Q\left( \b_1 - \b_1^{(n)} > e^{-n^{1/4}} \right) \leq C e^{-C'
   \sqrt{n}}, \ n=1,2,\ldots.  
\]
\end{lem}
\begin{proof}
 From the decompositions of $\b_i$ and $\b_i^{(n)}$ we obtain that
 $\b_i - \b_i^{(n)} = (A_i - A_i^{(n)}) Z_i$.  
Note that $Z_1 = 2R_{0,\nu-1} \leq 2R_0$. By \eqref{ETnutail} there
exists a constant $C$ such that $Q(Z_1 > x) \leq C x^{-\k}$ for all
$x>0$. Therefore, for any $x>0$ 
\begin{align*}
 Q\left( \b_1 - \b_1^{(n)} > x \right) &\leq Q\left( A_1 - A_1^{(n)}
 > e^{-C_1 \sqrt{n}} \right) + Q\left( Z_1 >  e^{C_1 \sqrt{n}} x
 \right) \\ 
&\leq C_2 e^{-C_3 \sqrt{n}} + C e^{-C_1 \kappa \sqrt{n}} x^{-\k}. 
\end{align*}
Choosing $x = e^{-n^{1/4}}$ completes the proof. 
\end{proof}

Based on the truncated crossing times $(\b_i^{(n)})$ we define a
sequence of point processes by  
\[
 N_{n,\w}^{(n)} = \sum_{i\geq 1} \d_{\b_i^{(n)}/n^{1/\k}},
 \ n=1,2,\ldots. 
\]
\begin{lem}\label{cutoffPPconv}
 $N_{n,\w}^{(n)} \overset{Q}{\Lra}N_{\l,\k}$ as $n\ra\infty$ for
  $\l=\tc$, the constant in \eqref{ETnutail}. 
\end{lem}
\begin{proof}
Let $f:\, (0,\infty] \ra \R_+$ be a continuous functon vanishing for
all $0<x<\d$ for some $\d>0$, and Lipshitz on the interval
$(\d,\infty)$. We will prove the following analogue of
\eqref{e:Laplace.conv}:  
\be \label{e:Laplace.conv.1}
 \lim_{n\ra\infty} E_Q\left[ e^{-N_{n,\w}^{(n)}(f) } \right] = \exp
 \left\{ - \int_0^\infty  (1-e^{-f(x)}) \l  x^{-\k-1} \, dx \right\}.   
\ee
According to Remark \ref{rk:lipshitz}, this will give us the claim of
the lemma. 

For $0<\tau<1$ we define a sequence of random random variables 
$$
K_n(\tau) = \text{card}\bigl\{ i=1,\ldots, n:\ \text{both} \ \b_i^{(n)}>\d
n^{1/\k} \ \text{and} \ \b_j^{(n)}>\d n^{1/\k} 
$$
$$
\text{for some} \
i+1\leq j\leq i+\tau n, \, j\leq n.  \bigr\}.
$$
We claim that
\be \label{e:Kn}
\lim_{\tau\to 0} \limsup_{n\to\infty} Q(K_n(\tau)>0)=0.
\ee
To see this, let $0<\e<1$, and consider a sequence of events 
$$
B_n(\e) = \bigl\{ \text{for some $i=1,\ldots,n$}, \ \b_i^{(n)}>\d
n^{1/\k}\ \text{but} \ \max(Y_i,Z_i)\leq \e n^{1/\k}\bigr\}.
$$
Since by \eqref{ETnutail} there
exists a constant $C$ such that $Q(\max(Y_1,Z_1) > x) \leq C x^{-\k}$
for all $x>0$, while by \eqref{Wtail} the random variable $A_1$ has
an exponentially fast decaying tail, we see that
\begin{align*}
Q(B_n(\e)) & \leq nQ\bigl( \max(Y_1,Z_1)\leq \e n^{1/\k}, \, \b_1^{(n)}>\d
n^{1/\k}\bigr) \\
& \leq nQ\bigl( \max(Y_1,Z_1)\leq \e n^{1/\k}, \, (A_1+1)\max(Y_1,Z_1) 
>\d n^{1/\k}\bigr)\\
& = O\Bigl( nQ(\max(Y_1,Z_1) > \d n^{1/\k}) E_Q\bigl( (A_1+1)^\k
\one(A_1+1 >\d/\e)\bigr)\Bigr) \\
& = O\Bigl( \d^{-\k} E_Q\bigl( (A_1+1)^\k
\one(A_1+1 >\d/\e)\bigr)\Bigr) 
\end{align*}
as in, for example, Breiman's lemma (\cite{breiman:1965}). Therefore,
\be \label{e:Bn}
\lim_{\e\to 0} \limsup_{n\to\infty} Q(B_n(\e))=0.
\ee
For $\tau,\e>0$
\begin{align*}
Q(K_n(\tau)>0) & \leq Q(B_n(\e))
+ Q\bigl( \text{for some $i=1,\ldots,n$, some $i+1\leq j\leq i+\tau
n$,} \\
& \qquad\qquad\qquad\qquad \max(Y_i,Z_i)> \e n^{1/\k} \ \text{and} \ \max(Y_j,Z_j)> 
\e n^{1/\k}\bigr) \\
& \leq Q(B_n(\e)) + \tau n^2 \bigl( Q(\max(Y_1,Z_1)> \e
n^{1/\k}\bigr)^2 \\
&\leq Q(B_n(\e)) + C^2\e^{-2\k}\tau.
\end{align*}
We conclude that 
$$
\lim_{\tau\to 0} \limsup_{n\to\infty} Q(K_n(\tau)>0) \leq 
\limsup_{n\to\infty} Q(B_n(\e)),  
$$
and so \eqref{e:Kn} follows from \eqref{e:Bn}. 

Fix, for a moment, $\e>0$ and take $\tau>0$ such that for some $n_0$
we have $Q(K_n(\tau)>0)\leq \e$ for all $n\geq n_0$; this is possible
by \eqref{e:Kn}. Consider the random sets 
$$
D_n =  \{ i=1,\ldots,n:\, \b_i^{(n)}>\d n^{1/\k}\}.
$$
Since $f(x)=0$ if $x\leq \d$, we can write
\begin{align}
E_Q\left[ e^{-N_{n,\w}^{(n)}(f) } \right] & = 
E_Q\exp\left\{ -\sum_{i\in D_n}
f\bigl(\b_i^{(n)}/n^{1/\k}\bigr)\right\}  \label{e:N.decomp}  \\
&= E_Q\left[ \exp\left\{ -\sum_{i\in D_n}
f\bigl(\b_i^{(n)}/n^{1/\k}\bigr)\right\} \one( K_n(\tau)=0)\right]
\nonumber \\
&\qquad + E_Q\left[ \exp\left\{ -\sum_{i\in D_n}
f\bigl(\b_i^{(n)}/n^{1/\k}\bigr)\right\} \one( K_n(\tau)>0)\right]
\nonumber \\
&:= H_n^{(1)} + H_n^{(2)}. \nonumber 
\end{align}
By the choice of $\tau$,
\be \label{e:H2}
\limsup_{n\to\infty} H_n^{(2)}\leq \limsup_{n\to\infty} Q(K_n(\tau)>0)\leq \e.
\ee
Moreover, given the event $\{  K_n(\tau)=0\}$, the points in the
random set $D_n$ are separated, for large $n$, by more than $\sqrt{n}$
and, hence, given also the random set $D_n$, the random variables
$\b_i^{(n)}, \, i\in D_n$ are independent, each one with the
corresponding conditional distribution. That is, 
$$
H_n^{(1)} = Q(K_n(\tau)=0)
E_Q\left\{ \Bigl[ E_Q \Bigl( \exp\bigl\{
  -f\bigl(\b_1^{(n)}/n^{1/\k}\bigr)\bigr\} \big|\b_1^{(n)}>\d
  n^{1/\k}\Bigr)\Bigr]^{{\rm card} D_n}\bigg| K_n(\tau)=0\right\}.
$$
The power law \eqref{ETnutail} and Lemma \ref{bcutoff} show the weak
convergence to the Pareto distribution
$$
Q\Bigl( \b_1^{(n)}/n^{1/\k}>t\big| \b_1^{(n)}>\d   n^{1/\k}\Bigr) 
\to (t/\d)^{-\k}
$$
for $t\geq \d$, and so by the bounded convergence theorem,
$$
 E_Q \Bigl( \exp\bigl\{
  -f\bigl(\b_1^{(n)}/n^{1/\k}\bigr)\bigr\} \big|\b_1^{(n)}>\d
  n^{1/\k}\Bigr) \to \int_1^\infty e^{-f(\d t)}\k t^{-(\k+1)}\, dt.
$$
Now the claim \eqref{e:Laplace.conv.1} follows from
\eqref{e:N.decomp}, \eqref{e:H2} and the following limiting statement:
for the constant $\tc$ in \eqref{ETnutail}, 
\begin{align} 
& \exp\bigl\{ -\tc (1-\alpha)\delta^{-\k}\bigr\} \leq \lim_{\tau\to
    0}\liminf_{n\to\infty} E_Q \left( \alpha^{{\rm card} D_n}\Big| 
K_n(\tau)=0\right) \label{e:mgf.card} \\
 = & \lim_{\tau\to 0}\limsup_{n\to\infty} E_Q \left(
\alpha^{{\rm card} D_n}\Big| K_n(\tau)=0\right)
\leq  \exp\bigl\{ -\tc (1-\alpha)\delta^{-\k}\bigr\} \nonumber
\end{align}
for all $0<\alpha<1$. In order to complete the proof of the lemma it,
therefore, remains to prove \eqref{e:mgf.card}.

We split the set $\{1,2\ldots, n\}$ into a union of the following
sets. Let
\begin{align*}
& I_{1.n} = \{1, \ldots, [n^{3/4}]\}, \ J_{1.n} = \{ [n^{3/4}]+1,
\ldots, [n^{3/4}] + [n^{2/3}], \\
& I_{2.n} = \{[n^{3/4}] + [n^{2/3}]+1, \ldots, 2[n^{3/4}]+
      [n^{2/3}]\}, \\
& J_{2.n} = \{ 2[n^{3/4}]+ [n^{2/3}]+1, \ldots, 2[n^{3/4}] +
2[n^{2/3}]\},
\end{align*}
etc. (the last interval can be a bit shorter than the rest). Clearly,
the cardinality $m_n$ of the union of all intervals  $J_{k.n}$
satisfies $m_n/n \to 0$ as $n\to\infty$. We write
$D_n = D_n^{(I)}\cup D_n^{(J)}$, where $D_n^{(I)}$ (resp. $D_n^{(J)}$)
contains all the points of $D_n$ that are in one of the
intervals $I_{k.n}$ (resp. $J_{k.n}$). Observe that the intervals
$I_{k.n}$ are separated  by more that $\sqrt{n}$, so for $i$ and $j$
in two different of this type, $\b_i^{(n)}$ and $\b_j^{(n)}$ are
independent. We have
\begin{align*} 
E_Q \left( \alpha^{{\rm card} D_n} \one\bigl( K_n(\tau)=0\bigr)\right) 
&\leq E_Q \left( \alpha^{{\rm card} D_n^{(I)}} \right) \\ 
&=  \left( E_Q\alpha^{{\rm Card}(D_n\cap I_{1.n})}\right)^{[n/([n^{3/4}]+ [n^{2/3}])]}.
\end{align*}
Repeating the argument leading to \eqref{e:Kn} (that shows that
$\b_i^{(n)}$ and $\b_j^{(n)}$ can both exceed $\d n^{1/\k}$ for
$0<|i-j|\leq n^{3/4}$ only on an event of a vanishing probability)
tells us that 
$$
Q\bigl( {\rm Card} (D_n\cap   I_{1.n})=1\bigr) 
\sim n^{3/4} Q\bigl( \b_1^{(n)}>\d n^{1/\k}\bigr)
\sim n^{3/4} \tc \d^{-\k}n^{-1} = \tc \d^{-\k}n^{-1/4},
$$
$$
Q\bigl( {\rm Card} (D_n\cap   I_{1.n})>1\bigr) =o(n^{-1/4}),
$$
Therefore,
$$
E_Q\alpha^{{\rm Card}(D_n\cap I_{1.n})} = 1-(1-\alpha)
\tc \d^{-\k}n^{-1/4} + o(n^{-1/4}),
$$
implying that  
$$
\limsup_{n\to\infty} E_Q \left(
\alpha^{{\rm card} D_n}\Big| K_n(\tau)=0\right)
\leq
\frac{1}{Q(K_n(\tau)=0)}\exp\bigl\{ -\tc (1-\alpha)\delta^{-\k}\bigr\},
$$
and the upper limit part in \eqref{e:mgf.card} follows from
\eqref{e:Kn}. 

Similarly, 
\begin{align*} 
& E_Q \left( \alpha^{{\rm card} D_n} \one\bigl(
  K_n(\tau)=0\bigr)\right)
\geq E_Q \left( \alpha^{{\rm card} D_n^{(I)}} \one\bigl( K_n(\tau)=0,
D_n^{(J)}=0\bigr)\right) \\
\geq & E_Q \left( \alpha^{{\rm card} D_n^{(I)}} \right) -
Q(K_n(\tau)>0) - Q(D_n^{(J)}>0).
\end{align*}
The last term vanishes in the limit since $m_n/n \to 0$. Therefore,
$$
\liminf_{n\to\infty} E_Q \left(
\alpha^{{\rm card} D_n}\Big| K_n(\tau)=0\right)
\geq
\exp\bigl\{ -\tc (1-\alpha)\delta^{-\k}\bigr\} - Q(K_n(\tau)>0),
$$
and the lower limit part in \eqref{e:mgf.card} follows from
\eqref{e:Kn} as well. 
\end{proof}

Now we are ready to finish the proof of Proposition
\ref{PPconvergence}, which we accomplish by checking
\eqref{e:Laplace.conv} for nonnegative continuous functions $f$ on
$(0,\infty]$ 
  with compact support that are Lipschitz continuous  on $(0,\infty)$. 
For any such function $f$, 
\begin{align*}
 E\left[e^{-N_{n,\w}(f)} \right] &= E\left[ \exp\left\{ - \sum_{i=1}^n f(\b_i/n^{1/\k}) \right\} \right] \\
&= E\left[e^{-N_{n,\w}^{(n)}(f)} \exp\left\{-\sum_{i=1}^n\left(
   f(\b_i/n^{1/\k}) - f(\b_i^{(n)}/n^{1/\k}) \right) \right\}
   \right]. 
\end{align*}
Now, let 
\[
 \Omega_n := \left\{ \w \in \Omega: \, \b_i - \b_i^{(n)} \leq e^{-n^{1/4}}, \, \forall i=1,2,\ldots n \right\}
\]
Lemma \ref{bcutoff} implies that $Q(\Omega_n^c) \ra 0$ as $n\ra\infty$. 
Since $f$ is Lipschitz with some constant $c$, on the event $\Omega_n$
we have 
\begin{align*}
\left| \sum_{i=1}^n\left( f(\b_i/n^{1/\k}) - f(\b_i^{(n)}/n^{1/\k}) \right) \right|
&\leq \frac{c}{n^{1/\k}} \sum_{i=1}^n |\b_i - \b_i^{(n)}| \\
&\leq c n^{1-1/\k} e^{-n^{1/4}}, 
\end{align*}
and so by Lemma \ref{cutoffPPconv}
\[
 \lim_{n\ra\infty} E\left[e^{-N_{n,\w}(f)} \right] = \lim_{n\ra\infty} E\left[e^{-N_{n,\w}^{(n)}(f)} \mathbf{1}_{\Omega_n} \right] = \Ev \left[ e^{-N_{\l,\k}(f) } \right],
\]
proving \eqref{e:Laplace.conv}. 
\end{proof}

In addition to the already established convergence of the point
processes $(N_{n,\w})$, in the sequel we will also need the following
tail bound on the sums of the average crossing times $\b_i$ that are not extremely large. 
\begin{lem}\label{ble}
 Let $\k\in[1,2)$. Then for any $\d>0$, 
\[
 \lim_{\e\ra 0^+} \limsup_{n\ra\infty} Q\left( \frac{1}{n^{1/\k}} \left| \sum_{i=1}^n \left( \b_i \ind{\b_i \leq \e n^{1/\k}} - E_Q[ \b_1 \ind{\b_1 \leq \e n^{1/\k}}] \right) \right| \geq \d \right) = 0. 
\]
\end{lem}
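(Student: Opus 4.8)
The plan is to deduce the bound from Chebyshev's inequality after a Doob (martingale) decomposition of the truncated sum that removes the contribution of the heavy tails to the covariances. Write $M=M_{\e,n}:=\e n^{1/\k}$ and $g(x):=x\ind{x\le M}$, so that the quantity inside the probability equals $n^{-1/\k}\bigl|\sum_{i=1}^n(g(\b_i)-E_Q g(\b_1))\bigr|$ and, by Chebyshev, it is enough to prove
\[
\lim_{\e\ra 0^+}\ \limsup_{n\ra\infty}\ \frac{1}{n^{2/\k}}\,\Var_Q\!\Bigl(\sum_{i=1}^n g(\b_i)\Bigr)=0.
\]
I would use the decomposition $\b_i=A_iZ_i+Y_i$ from the proof of Proposition~\ref{PPconvergence}, in which $A_i=W_{\nu_{i-1}-1}$, the pairs $(Y_i,Z_i)$ are i.i.d., $(Y_i,Z_i)$ is a function of the environment between $\nu_{i-1}$ and $\nu_i$, and $A_i$ is a function of the environment strictly to the left of $\nu_{i-1}$. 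Setting $\mathcal{G}_i:=\s(\w_x:\,x<\nu_i)$, under $Q$ each $A_i$ is $\mathcal{G}_{i-1}$-measurable and $(Y_i,Z_i)$ is independent of $\mathcal{G}_{i-1}$, so $E_Q[g(\b_i)\mid\mathcal{G}_{i-1}]=\tilde g(A_i)$ with $\tilde g(a):=E_Q[g(aZ_1+Y_1)]$, and I would split
\[
\sum_{i=1}^n g(\b_i)=\Bigl(\sum_{i=1}^n\bigl(g(\b_i)-\tilde g(A_i)\bigr)\Bigr)+\Bigl(\sum_{i=1}^n\tilde g(A_i)\Bigr)=:S_n'+S_n''.
\]

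The increments of $S_n'$ are martingale differences for the filtration $(\mathcal{G}_i)$, hence pairwise uncorrelated, so $\Var_Q(S_n')=\sum_i\Var_Q(g(\b_i)-\tilde g(A_i))\le\sum_i E_Q[g(\b_i)^2]=n\,E_Q[\b_1^2\ind{\b_1\le M}]$, and since $\k<2$ the power tail \eqref{ETnutail} gives $E_Q[\b_1^2\ind{\b_1\le M}]\le C M^{2-\k}$; thus $n^{-2/\k}\Var_Q(S_n')\le C\e^{2-\k}\ra 0$ as $\e\ra 0^+$. This is precisely the step that would fail for the raw sum: the covariances $\Cov_Q(g(\b_i),g(\b_j))$ are too large to sum over $i,j$ because of the heavy tail of $\b_1$, and the martingale decomposition quarantines that heaviness in $S_n'$, where no covariance control is needed.

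It remains to show $n^{-2/\k}\Var_Q(S_n'')\ra 0$. For this I would use that $(A_i)$ is stationary under $Q$ with exponentially light tails \eqref{Wtail} and obeys the perpetuity recursion $A_{i+1}=\Pi_{\nu_{i-1},\nu_i-1}A_i+W_{\nu_{i-1},\nu_i-1}$ driven by the i.i.d.\ environment pieces; in particular $A_j=W_{\nu_i,\nu_{j-1}-1}+\Pi_{\nu_i,\nu_{j-1}-1}A_{i+1}$ for $i<j$, where the first two factors are independent of $\mathcal{G}_i$ and $E_Q[\Pi_{\nu_i,\nu_{j-1}-1}]=\bar\rho^{\,j-1-i}$ with $\bar\rho:=E_Q[\Pi_{0,\nu-1}]<1$. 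From this recursion one gets at once $|\Cov_Q(A_i,A_j)|\le C\bar\rho^{\,|i-j|}$. For $\k\in(1,2)$ I would write $\tilde g(a)=a\,E_Q Z_1+E_Q Y_1-R_M(a)$ with $0\le R_M(a)=E_Q[(aZ_1+Y_1)\ind{aZ_1+Y_1>M}]\le C(1+a)^\k M^{1-\k}$, so that $S_n''=E_Q Z_1\sum_i A_i+nE_Q Y_1-\sum_i R_M(A_i)$; the linear part has variance $O(n)=o(n^{2/\k})$ by the covariance bound just stated. For the remainder, using the same splitting of $A_j$, the bound on $R_M$, and a regular-variation estimate for $Q(cZ_1+Y_1>M)$ uniform in the constant $c$ (a consequence of the tail asymptotics \eqref{ETnutail} and Breiman's lemma), I expect
\[
|\Cov_Q(R_M(A_i),R_M(A_j))|\le C M^{2(1-\k)}\bigl(\theta^{\,|i-j|}+\eta(M)\bigr),\qquad \theta\in(0,1),\ \eta(M)\ra 0,
\]
so that $\Var_Q(\sum_i R_M(A_i))\le CnM^{2(1-\k)}+Cn^2M^{2(1-\k)}\eta(M)$; dividing by $n^{2/\k}$ turns these into $C\e^{2(1-\k)}n^{-1}$ and $C\e^{2(1-\k)}\eta(\e n^{1/\k})$, both tending to $0$ as $n\ra\infty$. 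Combining with $\Var_Q(\sum g(\b_i))\le 2\Var_Q(S_n')+2\Var_Q(S_n'')$ completes the case $\k\in(1,2)$.

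In the boundary case $\k=1$ one has $E_Q Z_1=E_Q Y_1=\infty$, and the affine approximation of $\tilde g$ must be replaced by a slowly varying one, with $E_Q[Z_1\ind{Z_1\le T}]\sim\tc\log T$; the same estimates go through with the constants $E_Q Z_1,E_Q Y_1$ replaced by their truncations of size $O(\log n)$, which is harmless because these enter only through factors of order $n^{1-2/\k}(\log n)^2=(\log n)^2/n$. I expect the main difficulty to be the analysis of the predictable part $S_n''$, i.e.\ the control of the stationary exponentially mixing perpetuity sequence $(A_i)$ and of the truncation remainder $R_M$—the covariance bound for $R_M(A_i),R_M(A_j)$ being the most technical point, and the place where the regular variation of the tail of $E_\w T_\nu$ is used in a form uniform in the scaling constant. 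The martingale decomposition is the device that reduces the entire statement to this light-tailed question.
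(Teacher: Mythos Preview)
Your martingale decomposition is a genuinely different route from the paper's, and the martingale piece $S_n'$ is handled correctly: orthogonality of the increments reduces it to the single-term second moment, which Karamata controls. The paper instead writes $\b_i\ind{\b_i\le M}=(\b_i\wedge M)-M\ind{\b_i>M}$. The indicator sum is dispatched at once via the already-established Proposition~\ref{PPconvergence}: the count $\sum_{i\le n}\ind{\b_i>\e n^{1/\k}}$ converges in law to $N_{\l,\k}(\e,\infty)$, a Poisson variable with mean $\l\e^{-\k}/\k$, so after centering and scaling by $\e$ its fluctuations are $O(\e^{1-\k/2})\to 0$. For $\sum_i(\b_i\wedge M)$ the paper applies Chebyshev and bounds $\Cov_Q(\b_1\wedge M,\b_{k+1}\wedge M)$ by writing $\b_{k+1}=\tilde\b_{k+1}+2W_{\nu_1-1}\Pi_{\nu_1,\nu_k-1}R_{\nu_k,\nu_{k+1}-1}$ with $\tilde\b_{k+1}$ independent of $\b_1$; Cauchy--Schwarz and the factorisation $E_Q[\Pi_{\nu_1,\nu_k-1}^2]=E_Q[\Pi_{0,\nu-1}^2]^{k-1}$ yield a bound $C\e^{2-\k}n^{2/\k-1}\rho^k$, geometric in $k$. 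This argument treats all $\k\in[1,2)$ uniformly with no case split.

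The gap in your proposal is in $S_n''$. The decisive advantage of the paper's splitting is that $x\mapsto x\wedge M$ is $1$-Lipschitz, so $|\b_{k+1}\wedge M-\tilde\b_{k+1}\wedge M|\le\b_{k+1}-\tilde\b_{k+1}$, and this difference factorises explicitly with the geometrically small $\Pi_{\nu_1,\nu_k-1}$. Your $g(x)=x\ind{x\le M}$ jumps from $M$ to $0$ at $x=M$, so neither $g$ nor $\tilde g$ is Lipschitz, and the asserted bound $|\Cov_Q(R_M(A_i),R_M(A_j))|\le CM^{2(1-\k)}(\theta^{|i-j|}+\eta(M))$ with $\eta(M)\to 0$ requires estimating $R_M(a+h)-R_M(a)$ for small $h$; this brings in quantities like $E_Q[Z_1\ind{(a+h)Z_1+Y_1>M}]$ and $M\,Q(aZ_1+Y_1\le M<(a+h)Z_1+Y_1)$ for the \emph{dependent} pair $(Z_1,Y_1)$, which is exactly the ``uniform Breiman'' step you flag but do not carry out. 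For $\k=1$ the affine splitting $\tilde g(a)=aE_QZ_1+E_QY_1-R_M(a)$ collapses since both expectations diverge; replacing them by truncated means adds a further slowly varying remainder whose covariances also need tracking. The approach can likely be completed, but as written the $S_n''$ analysis is a sketch, whereas the paper's Lipschitz-plus-point-process route sidesteps these technicalities entirely.
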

\begin{proof} Clearly, $\b_i\ind{\b_i \leq \e n^{1/\k}} = \b_i \wedge
  \e n^{1/\k} - \e n^{1/\k} \ind{\b_i > \e n^{1/\k}}$. Therefore,  
\begin{align}
& Q\left( \frac{1}{n^{1/\k}} \left| \sum_{i=1}^n \left( \b_i \ind{\b_i \leq \e n^{1/\k}} - E_Q[ \b_1 \ind{\b_1 \leq \e n^{1/\k}}] \right) \right| \geq \d \right) \nonumber \\
&\quad\leq Q\left( \frac{1}{n^{1/\k}} \left| \sum_{i=1}^n \left( \b_i \wedge \e n^{1/\k} - E_Q[ \b_1 \wedge \e n^{1/\k} ] \right) \right| \geq \d/2 \right) \label{truncterm} \\
&\quad\qquad + Q\left( \e \left| \sum_{i=1}^n \ind{\b_i > \e n^{1/\k}}
  - n Q( \b_1 > \e n^{1/\k})  \right| \geq \d/2 \right).  \label{poissonterm}
\end{align}
We will first handle the term in \eqref{poissonterm}. 
For $\e>0$, let $G_\e : \mathcal{M}_p \ra \Z_+$ be defined by
$G_\e(\zeta) = \sum_{i\geq 1} \ind{x_i > \e}$ when $\zeta =
\sum_{i\geq 1}\d_{x_i}$. Then, since $G_\e$ is continuous on the set
$\mathcal{M}_p^{(\e)} = \{\zeta(\{\e\} = 0 \}$, we conclude by
Proposition \ref{PPconvergence} and the continuous mapping theorem that  
$\sum_{i=1}^n \ind{\b_i > \e n^{1/\k}} = G_\e(N_{n,\w}) \Lra
G_\e(N_{\l,\k})$. Further, it follows from \eqref{ETnutail} that $nQ(
\b_1 > \e n^{1/\k}) \ra \tc \e^{-\k} =  \Ev[ G_\e(N_{\l,\k})]$ as
$n\ra\infty$. Now, since $G_\e(N_{\l,\k})$ has Poisson distribution
with mean $\l\e^{-\k}/\k$, we see that 
\begin{align*}
& \lim_{\e\ra 0} \limsup_{n\ra\infty} Q\left( \e \left| \sum_{i=1}^n \ind{\b_i > \e n^{1/\k}} - n Q( \b_1 > \e n^{1/\k})  \right| \geq \d/2 \right) \\
&\quad \leq \lim_{\e\ra 0} \Pv\left( |G_\e(N_{\l,\k}) - \Ev[ G_\e(N_{\l,\k})] | \geq \frac{\d}{2\e} \right) \\
&\quad \leq \lim_{\e\ra 0} \frac{4\e^2}{\d^2} \Varv( G_\e(N_{\l,\k}) ) = \lim_{\e\ra 0} \frac{4\e^{2-\k}\l}{\d^2\k} = 0. 
\end{align*}

Next, we estimate the probability in \eqref{truncterm}. 
By Chebychev's inequality and the fact that the $\b_i$ are stationary
under $Q$,  this probability is bounded above by
\begin{align}
& \frac{4}{\d^2 n^{2/\k}} \Var_Q\left( \sum_{i=1}^n \b_i \wedge \e n^{1/\k} \right) \nonumber \\
&\quad= \frac{4}{\d^2 n^{2/\k}}n \Var_Q(\b_1 \wedge \e n^{1/\k}) + \frac{8}{\d^2 n^{2/\k}} \sum_{k=1}^n(n-k) \Cov_Q(\b_1 \wedge \e n^{1/\k}, \b_{k+1}\wedge \e n^{1/\k} ). \label{VarCov}
\end{align}
Now, the tail decay \eqref{ETnutail} of $\b_1$ and Karamata's theorem
(see p. 17 in \cite{rEVRVPP}) imply that 
\[
\limsup_{n\ra\infty} n^{-(2/\k-1)}\Var_Q(\b_1 \wedge \e n^{1/\k}) \leq
\lim_{n\ra\infty}  n^{-(2/\k-1)} E_Q[ \b_1^2 \wedge \e^2 n^{2/\k} ] 
= \frac{2 \tc}{2-\k} \e^{2-\k}.
\]
Since $\k<2$ this vanishes as $\e\ra 0$ and so 
to finish the proof of the lemma it is enough to show that 
\be\label{Covterms}
 \lim_{\e\ra 0} \limsup_{n\ra\infty} \frac{1}{n^{2/\k}} \sum_{k=1}^n(n-k) \Cov_Q(\b_1 \wedge \e n^{1/\k}, \b_{k+1}\wedge \e n^{1/\k} ) = 0.
\ee

To bound the covariance terms, we use \eqref{QETform} to write 
\begin{align*}
 \b_{k+1} &= \sum_{j=\nu_{k}}^{\nu_{k+1}-1} (1+2W_j) \\
&= \nu_{k+1} - \nu_k + 2 \sum_{j=\nu_{k}}^{\nu_{k+1}-1} W_{\nu_1,j} + 2 W_{\nu_1-1} \Pi_{\nu_1,\nu_k-1} R_{\nu_k, \nu_{k+1}-1} \\
&=: \tilde\b_{k+1} + 2 W_{\nu_1-1} \Pi_{\nu_1,\nu_k-1} R_{\nu_k, \nu_{k+1}-1}. 
\end{align*}
Note that $\tilde\b_{k+1}$ is independent of $\b_1$, so that for some
constant $C'$
\begin{align}
 \Cov_Q(\b_1 \wedge \e n^{1/\k}, \b_{k+1}\wedge \e n^{1/\k} ) &= \Cov_Q(\b_1 \wedge \e n^{1/\k}, \b_{k+1}\wedge \e n^{1/\k} -  \tilde\b_{k+1}\wedge \e n^{1/\k}) \nonumber \\
&\leq \sqrt{ \Var_Q( \b_1 \wedge \e n^{1/\k} ) } \sqrt{ \Var_Q( \b_{k+1}\wedge \e n^{1/\k} -  \tilde\b_{k+1}\wedge \e n^{1/\k}) } \nonumber \\
&\leq C' \e^{1-\k/2} n^{1/\k-1/2}\sqrt{ E_Q[ (\b_{k+1} -
     \tilde\b_{k+1})^2 \ind{ \tilde\b_{k+1} \leq \e n^{1/\k} } ] } \label{Covub}
\end{align}
for $n$ large enough. An examination of the formula for
$\tilde\b_{k+1}$ shows that $R_{\nu_k,\nu_{k+1}-1} \leq
\tilde\b_{k+1}$. Therefore,  
\begin{align}
& E_Q[ (\b_{k+1} -  \tilde\b_{k+1})^2 \ind{ \tilde\b_{k+1} \leq \e n^{1/\k} } ] 
= 4 E_Q\left[ W_{\nu_1-1}^2 \Pi_{\nu_1,\nu_k-1}^2 R_{\nu_k, \nu_{k+1}-1}^2 \ind{ \tilde\b_{k+1} \leq \e n^{1/\k} } \right] \nonumber \\
&\quad \leq 4 E_Q\left[ W_{\nu_1-1}^2 \right] E_Q\left[\Pi_{\nu_1,\nu_k-1}^2 \right] E_Q\left[ R_{\nu_k, \nu_{k+1}-1}^2 \ind{ R_{\nu_k, \nu_{k+1}-1} \leq \e n^{1/\k} } \right] \nonumber \\
&\quad = 4 E_Q\left[ W_{-1}^2 \right] E_Q\left[\Pi_{0,\nu-1}^2 \right]^{k-1} E_Q\left[ R_{0,\nu-1}^2 \ind{ R_{0,\nu-1} \leq \e n^{1/\k} } \right], \label{bbtilde}
\end{align}
where in the last step we used the invariance of the distribution $Q$
under shifts by the ladder locations $\nu_i$. Further, $E_Q[W_{-1}^2]<
\infty$ by \eqref{Wtail}, and $E_Q[\Pi_{0,\nu-1}] < 1$ by the
definition of the ladder locations. Also, since $R_{0,\nu-1} \leq
\b_1$, $E_Q\left[ R_{0,\nu-1}^2 \ind{ R_{0,\nu-1} \leq \e n^{1/\k} } \right]
\leq C' \e^{2-\k} n^{2/\k-1}$ for large $n$. Combining this with \eqref{Covub} and
\eqref{bbtilde} we see that for some $0<\rho<1$, 
\[
 \Cov_Q(\b_1 \wedge \e n^{1/\k}, \b_{k+1}\wedge \e n^{1/\k} ) \leq
 (C')^2 \e^{2-\k} n^{2/\k-1} \rho^k, 
\]
and this bound on the covariance is sufficient to prove
\eqref{Covterms}. This finishes the proof of the lemma. 
\end{proof}

We conclude this section by giving a corollary of Lemma \ref{ble} that is of independent interest. 
In \cite{pzSL1} it was shown that, if $0<\k<1$, then $n^{-1/\k} E_\w
T_{\nu_n} = n^{-1/\k} \sum_{i=1}^n \b_i$ converges in distribution to
a $\k$-stable random variable. 
The following corollary shows that $E_\w T_{\nu_n}$ has a stable limit law when $\k\in[1,2)$ as well. 
\begin{cor}\label{ETnunstable12}
 If $\k = 1$, then there exists a $b>0$ and a sequence $D''(n) = E[\b_1 \ind{\b_1 \leq n} ] \sim \tc
 \log n$ such that
\[
 \lim_{n\ra\infty} Q\left( \frac{E_\w T_{\nu_n} - n D''(n)}{n} \leq x \right) = L_{1,b}(x), \qquad \forall x\in\R. 
\]
If $\k\in(1,2)$, then 
\[
 \lim_{n\ra\infty} Q\left( \frac{E_\w T_{\nu_n} - nE_Q[E_\w T_{\nu_1}]}{n^{1/\k}} \leq x \right) = L_{\k,b}(x), \qquad \forall x\in\R.
\]
In both cases $b^\k = \l/\k$. 
\end{cor}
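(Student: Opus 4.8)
The plan is to deduce the statement by combining the point--process convergence of Proposition~\ref{PPconvergence} with the truncation estimate of Lemma~\ref{ble} and the classical series (LePage) representation of stable laws. Write $E_\w T_{\nu_n}=\sum_{i=1}^n\b_i$ and fix a truncation level $\e>0$, splitting the sum into the contribution of the ``large'' crossing times $\Sigma_n^{>\e}:=\sum_{i=1}^n\b_i\ind{\b_i>\e n^{1/\k}}$ and of the ``small'' ones $\Sigma_n^{\le\e}:=\sum_{i=1}^n\b_i\ind{\b_i\le\e n^{1/\k}}$. The first observation is that $n^{-1/\k}\Sigma_n^{>\e}$ is precisely the sum of those points of the point process $N_{n,\w}$ of \eqref{Nndef} that exceed $\e$. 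Since the functional ``sum of the points exceeding $\e$'' is finite on $\mathcal{M}_p$ and continuous at $\Pv$--a.e. realization of $N_{\l,\k}$ (because $N_{\l,\k}(\{\e\})=0$ a.s.), Proposition~\ref{PPconvergence} and the continuous mapping theorem give that, under $Q$, $n^{-1/\k}\Sigma_n^{>\e}$ converges in distribution to $\sum_j y_j\ind{y_j>\e}$, where $(y_j)$ enumerates the points of $N_{\l,\k}$ with $\l=\tc\k$. Passing from the compactly supported test functions of Proposition~\ref{PPconvergence} to this unbounded functional requires only a routine tail truncation of the type carried out in \cite{rEVRVPP}.

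For the small crossing times, Lemma~\ref{ble} states exactly that $n^{-1/\k}\bigl(\Sigma_n^{\le\e}-nE_Q[\b_1\ind{\b_1\le\e n^{1/\k}}]\bigr)\to 0$ in $Q$--probability in the iterated limit $\lim_{\e\to0}\limsup_{n\to\infty}$, so the small crossing times contribute, to leading order, only the deterministic quantity $n^{1-1/\k}E_Q[\b_1\ind{\b_1\le\e n^{1/\k}}]$. I would evaluate this quantity by Karamata's theorem, using $Q(\b_1>x)\sim\tc x^{-\k}$, and compare it with the stated centerings. When $\k\in(1,2)$, $E_Q[\b_1]<\infty$ and
$$
n^{1-1/\k}\bigl(E_Q[\b_1\ind{\b_1\le\e n^{1/\k}}]-E_Q[\b_1]\bigr)=-n^{1-1/\k}E_Q[\b_1\ind{\b_1>\e n^{1/\k}}]\longrightarrow -\tfrac{\l}{\k-1}\e^{-(\k-1)}=-c_{\l,\k}(\e);
$$
when $\k=1$, with $D''(n)=E_Q[\b_1\ind{\b_1\le n}]\sim\tc\log n$, one gets $E_Q[\b_1\ind{\b_1\le\e n}]-D''(n)=-E_Q[\b_1\ind{\e n<\b_1\le n}]\to-\tc\log(1/\e)=-c_{\l,1}(\e)$ (recall $\l=\tc\k=\tc$ here). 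Combining this with the previous paragraph, the correctly centered and normalized partial sum converges, for each fixed $\e$, to $Z_\e:=\sum_j y_j\ind{y_j>\e}-c_{\l,\k}(\e)$.

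It then remains to let $\e\to0$. Realizing $N_{\l,\k}=\sum_j\d_{\l^{1/\k}\Gamma_j^{-1/\k}}$ as in Remark~\ref{r:poisson.rep} on one probability space, the variables $Z_\e$ converge $\Pv$--a.s. as $\e\to0$ (the square summability of the points, valid since $\k<2$, is what guarantees this), and the limit $Z$ has a totally skewed $\k$--stable distribution; its scale $b$ is identified through the standard LePage representation, equivalently via the tail behaviour $\Pv(Z>x)\sim(\l/\k)x^{-\k}$, which yields $b^\k=\l/\k$ (see \cite{samorodnitsky:taqqu:1994}). Finally I would invoke the standard approximation theorem for weak convergence (Theorem~3.2 in \cite{bCOPM}) with the three ingredients now in hand --- convergence of the $\e$--approximation as $n\to\infty$ for each fixed $\e$, convergence $Z_\e\to Z$ in distribution as $\e\to0$, and the uniform smallness of the approximation error supplied by Lemma~\ref{ble} --- to conclude that $\bigl(E_\w T_{\nu_n}-nD''(n)\bigr)/n$, respectively $\bigl(E_\w T_{\nu_n}-nE_Q[\b_1]\bigr)/n^{1/\k}$, converges in distribution under $Q$ to $L_{1,b}$, respectively $L_{\k,b}$.

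I expect the main obstacle to be the bookkeeping in the case $\k=1$: one must verify by Karamata's theorem both that $D''(n)=E_Q[\b_1\ind{\b_1\le n}]\sim\tc\log n$ and that the truncated difference $E_Q[\b_1\ind{\e n<\b_1\le n}]$ converges to exactly $\tc\log(1/\e)$ as $n\to\infty$, so that it cancels precisely against the point--process centering constant $c_{\l,1}(\e)$; any slack here would spoil the identification of the limit. The only other slightly delicate point is the passage, via almost--sure continuity of the functional ``sum of the points exceeding $\e$'', from the Laplace--functional convergence of Proposition~\ref{PPconvergence} to the distributional convergence of $n^{-1/\k}\Sigma_n^{>\e}$, together with the tail truncation needed there; both are routine with the tools of \cite{rEVRVPP} but should be spelled out.
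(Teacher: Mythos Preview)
Your approach is correct and is essentially the same as the paper's, only more explicit. The paper's entire proof reads: ``This is a direct application of Proposition~\ref{PPconvergence} and Lemma~\ref{ble} to Theorem~3.1 in \cite{dhStationaryPP}.'' That theorem of Davis and Hsing packages precisely the three-step argument you have written out --- point-process convergence for the large-jump part, an $\e$-truncation estimate for the small-jump part, and the $\e\to 0$ approximation via \cite[Theorem~3.2]{bCOPM} together with the LePage identification of the limit --- into a single black box. So you are not taking a different route; you are reproducing by hand the contents of the cited theorem, using exactly the same two inputs (Proposition~\ref{PPconvergence} and Lemma~\ref{ble}) that the paper feeds into it. Your bookkeeping for the $\k=1$ centering and the Karamata calculations is correct, and the continuity of $\zeta\mapsto\sum_i x_i\ind{x_i>\e}$ on $\mathcal{M}_p^{(\e)}$ is indeed the right way to handle the large part (no genuine ``tail truncation'' is needed there, since only finitely many points exceed $\e$).
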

\begin{proof}
 This is a direct application of Proposition \ref{PPconvergence} and
Lemma \ref{ble} to Theorem 3.1 in \cite{dhStationaryPP}. 
\end{proof}

\section{Weak quenched limits of hitting times - quenched centering}\label{quenchedcentering}

Having done the necessary preperatory work in Sections \ref{Transfer} and \ref{betaanalysis} we are now ready to prove Theorem \ref{wqlTn}.
Recall, that by Proposition \ref{bigtransfer} it is enough to show
that $\bar{\s}_{n,\w} \overset{Q}{\Lra} \bar{H}(N_{\l,\k})$ for some
$\l>0$, where $\bar{\s}_{n,\w} = \bar{H}(N_{n,\w})$ is given in
\eqref{sdef}, while $\bar{H}$ and $N_{n,\w}$ are defined by
\eqref{barHdef} and \eqref{Nndef}, respectively. 
Since $N_{n,\w} \overset{Q}{\Lra} N_{\l,\k}$ by Proposition
\ref{PPconvergence}, if the mapping $\bar{H}:\mathcal{M}_p \ra
\mathcal{M}_1$ were continuous the statement of Theorem \ref{wqlTn} would
follow by the continuous mapping theorem. Unfortunately, $\bar{H}$ is
not a continuous mapping. To overcome this, we employ a truncation
technique. 

For $\e>0$ define the a mapping $\bar{H}_\e: \mathcal{M}_p \ra
\mathcal{M}_1$ by modifying the definition  \eqref{sdef} as follows: 
\be\label{barHe}
 \bar{H}_\e(\zeta)(\cdot) = \Pv\left( \sum_{i\geq 1} x_i(\tau_i-1)\ind{x_i > \e} \in \cdot \, \right), \quad\text{when } \zeta=\sum_{i\geq 1} \d_{x_i}. 
\ee
It turns out that this mapping is continuous on the relevant subset of
$\mathcal{M}_p$.  
\begin{lem}\label{barHecont}
$\bar{H}_\e$ is continuous on the set $\mathcal{M}_p^{(\e)} := \{ \zeta \in \mathcal{M}_p : \, \zeta(\{\e\}) = 0 \}$.
\end{lem}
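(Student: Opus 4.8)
The plan is to prove that $\bar H_\e$ is continuous at every $\zeta \in \mathcal{M}_p^{(\e)}$ by combining two observations: first, that the map $\zeta \mapsto \sum_{i\geq 1} \d_{x_i}\ind{x_i > \e}$ is continuous at such $\zeta$ (in the vague topology on $\mathcal{M}_p$), and second, that the finite-dimensional map $(y_1,\ldots,y_m) \mapsto \Pv(\sum_{j=1}^m y_j(\tau_j-1) \in \cdot)$ is continuous from $\R^m$ into $\mathcal{M}_1$ equipped with the Prohorov metric. First I would fix $\zeta = \sum_{i\geq 1}\d_{x_i} \in \mathcal{M}_p^{(\e)}$ and suppose $\zeta_n \overset{v}{\ra}\zeta$. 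Since $\zeta$ assigns finite mass to $(\e,\infty]$ and charges no mass to the boundary point $\e$, the restrictions $\zeta_n|_{(\e,\infty]}$ converge vaguely — hence, the mass being finite, weakly — to $\zeta|_{(\e,\infty]}$. Consequently $\zeta$ has finitely many points $x_{(1)} \geq \cdots \geq x_{(m)}$ in $(\e,\infty]$, and for all large $n$ the process $\zeta_n$ has exactly $m$ points in $(\e,\infty]$, say $x_{(1)}^{(n)}, \ldots, x_{(m)}^{(n)}$, with $x_{(j)}^{(n)} \ra x_{(j)}$ for each $j=1,\ldots,m$ (this is the standard consequence of weak convergence of point measures with no mass on the boundary).

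Next I would show that $(y_1,\ldots,y_m)\mapsto \mathrm{Law}\bigl(\sum_{j=1}^m y_j(\tau_j-1)\bigr)$ is continuous into $(\mathcal{M}_1,\rho)$. Indeed, if $y_{(j)}^{(n)} \ra y_{(j)}$ then $\sum_j y_{(j)}^{(n)}(\tau_j-1) \ra \sum_j y_{(j)}(\tau_j-1)$ almost surely (a finite sum), hence in distribution; since $\Lra$ of $\R$-valued random variables corresponds exactly to convergence of their laws in the Prohorov metric on $\mathcal{M}_1$, we get $\rho\bigl(\bar H_\e(\zeta_n), \bar H_\e(\zeta)\bigr) \ra 0$. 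Here it is essential that $\sum_{i\geq 1}x_i^2 < \infty$ trivially for the truncated process — there are only finitely many points above $\e$ — so the ``otherwise'' branch of \eqref{barHdef} never intervenes near $\zeta$, and $\bar H_\e(\zeta_n)$ is genuinely the law described in \eqref{barHe} rather than $\d_0$.

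The main obstacle, and the step requiring the most care, is the claim that for large $n$ the point process $\zeta_n$ has exactly $m$ points in $(\e,\infty]$ with locations converging to those of $\zeta$. This can fail if $\zeta$ has mass at $\e$ — which is precisely why the statement restricts to $\mathcal{M}_p^{(\e)}$ — and it requires one to rule out points of $\zeta_n$ escaping to a neighborhood of $\e$ from below, or points near $+\infty$ (here $(0,\infty]$ being the one-point compactification handles the top). I would handle this by choosing $\e' < \e$ with $\zeta(\{\e'\}) = 0$ and $\zeta((\e',\e]) = 0$ (possible since $\zeta$ has at most countably many atoms and only finitely many points in any $(\e'',\infty]$), applying vague convergence on the relatively compact open set $(\e',\infty]$ to conclude $\zeta_n((\e',\infty]) \ra \zeta((\e',\infty]) = m$, and then using that no mass of the limit sits in $(\e',\e]$ to ensure all $m$ points of $\zeta_n$ in $(\e',\infty]$ eventually lie in $(\e,\infty]$, with the correct multiplicities and limiting locations. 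The rest is the routine finite-dimensional continuity argument sketched above.
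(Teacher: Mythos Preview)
Your proposal is correct and follows essentially the same approach as the paper's proof: both arguments reduce to the observation that vague convergence $\zeta_n\overset{v}{\ra}\zeta$ with $\zeta(\{\e\})=0$ forces the finitely many points of $\zeta_n$ above $\e$ to converge to those of $\zeta$, after which almost-sure convergence of the finite sum $\sum_{i=1}^M x_i^{(n)}(\tau_i-1)$ gives convergence in $\mathcal{M}_1$. The paper dispatches the first step by citing Proposition~3.13 of Resnick's \emph{Extreme Values, Regular Variation and Point Processes}, whereas you spell out the $\e'$-argument explicitly; otherwise the proofs are the same.
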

\begin{proof}
%
Let $\zeta_n \overset{v}{\ra} \zeta \in \mathcal{M}_p^{(\e)}$. Then, by \cite[Proposition 3.13]{rEVRVPP} there exists an integer $M$ and a labelling of the points of $\zeta$ and $\zeta_n$ (for $n$ sufficiently large) such that 
\[
  \zeta( \cdot \cap (\e,\infty) ) = \sum_{i=1}^M \delta_{x_i}, 
\quad \text{and}\quad 
\zeta_n( \cdot \cap (\e,\infty) ) =
\sum_{i=1}^M \delta_{x_i^{(n)}}, 
\]
with $(x_1^{(n)}, x_2^{(n)}, \ldots x_M^{(n)}) \ra (x_1,x_2,\ldots x_M)$ as $n\ra\infty$.
Consequently,
\[
\lim_{n\ra\infty} \bar{H}_\e(\zeta_n)(\cdot)  = \lim_{n\ra\infty} \Pv \left( \sum_{i=1}^M x_i^{(n)} (
\tau_i - 1) \in \cdot \right) 
= \Pv \left( \sum_{i=1}^M x_i ( \tau_i - 1) \in \cdot \right) =
\bar{H}_\e(\zeta)(\cdot) 
\]
in the space $\mathcal{M}_1$.  
%
\end{proof}

\begin{proof}[Proof of Theorem \ref{wqlTn}]
Since $\Pv(N_{\l,\k} \notin \mathcal{M}_p^{(\e)} ) = 0$, Proposition
\ref{PPconvergence}, Lemma \ref{barHecont} 
 and the continuous mapping theorem \cite[Theorem 2.7]{bCOPM} imply
 that  for every $\e>0$, 
\be\label{ncon}
 \bar{H}_\e(N_{n,\w}) \overset{Q}{\Lra} \bar{H}_\e(N_{\l,\k}), \quad \text{as } n\ra\infty. 
\ee
Next, we claim that  
\be\label{econ}
 \lim_{\e\ra 0^+} \bar{H}_\e(N_{\l,\k}) = \bar{H}(N_{\l,\k}), \quad \Pv\text{-a.s.}
\ee
and 
\be\label{ipcon}
 \lim_{\e\ra 0} \limsup_{n\ra\infty} Q\left( \rho(\bar{H}_\e(N_{n,\w}), \bar{H}(N_{n,\w})) \geq \d \right) = 0, \quad \forall \d>0. 
\ee
By \cite[Theorem 3.2]{bCOPM} this will show that 
\[
\bar{\s}_{n,\w} = \bar{H}(N_{n,\w}) \overset{Q}{\Lra}
\bar{H}(N_{\l,\k}), 
\]
which, by Proposition \ref{bigtransfer}, is enough for the 
the conclusion of Theorem \ref{wqlTn}. Thus, it only remains to prove
\eqref{econ} and \eqref{ipcon}.  Since the claim \eqref{econ} follows
from the continuity of the map $ \bar{H}_2$ in the proof of Lemma
\ref{l:meas} in Appendix \ref{Hmeas}, we prove \eqref{ipcon}. 

Recall that for any two random variables $X$ and $Y$ defined on the
same probability space, with respective laws ${\mathcal L}_{X}$ and
${\mathcal L}_{Y}$, $\rho({\mathcal L}_{X}, {\mathcal L}_{Y})\leq
\bigl( E|X-Y|^2\bigr)^{1/3}$. Therefore, 
$$
\rho( \bar{H}_\e(N_{n,\w}), \bar{H}(N_{n,\w})) \leq \left( \frac{1}{n^{2/\k}}
\sum_{i=1}^n \beta_i^2 \ind{ \b_i/n^{1/\k} \leq \e } \right)^{1/3}
$$
and so by the Markov inequality, \eqref{ETnutail} and Karamata's theorem, 
\begin{align*}
\limsup_{n\to\infty} Q\left( \rho(\bar{H}_\e(N_{n,\w}), \bar{H}(N_{n,\w})) \geq \d \right) 
&\leq \limsup_{n\to\infty} Q\left( \frac{1}{n^{2/\k}} \sum_{i=1}^n \beta_i^2 \ind{ \b_i/n^{1/\k} \leq \e } \geq \d^3 \right) \\
&\leq \limsup_{n\to\infty} \frac{n^{1-2/\k}}{\d^3} E_Q[ \b_1^2
  \ind{\b_1 \leq \e n^{1/\k}} ] \\
& = \frac{\tc \k \e^{2-\k} }{(2-\k)\d^3}
\end{align*}
Since $\k <2$ the right hand side tends to $0$ as $\e\ra 0$. This completes the proof of \eqref{ipcon} and thus also the proof of the Theorem \ref{wqlTn}.
\end{proof}

\section{Weak quenched limiting distributions - averaged centering}\label{averagedcentering}

In this section we prove weak convergence with the averaged centering
stated in Theorem \ref{wqlTnA}. The argument is similar in most
respects to the proof of Theorem \ref{wqlTn} in the previous section,
so we will concentrate now on those parts of the argument that are
different. Recall that by Proposition \ref{bigtransferA} we only need
to establish a weak quenched limit for 
\be\label{sHNn}
 \s_{n,\w} = 
\begin{cases}
 H(N_{n,\w}) & \kappa \in(0,1) \\
 H(N_{n,\w})*\d_{-D'(n)} & \kappa = 1 \\
 H(N_{n,\w})*\d_{-\bar\b n^{1-1/\k}} & \kappa \in(1,2),
\end{cases}
\ee
where $H:\mathcal{M}_p \ra \mathcal{M}_1$ is given by \eqref{Hdef}. 
We will  use Proposition \ref{PPconvergence} and, once again, we have
to use a truncated version of the mapping $H$. We will use the mapping
$H_\e$ defined in \eqref{Hedef}. The following lemma, whose proof is
identical to that  of Lemma \ref{barHecont}, shows that $H_\e$ is also
continuous on the relevant subset of $\mathcal{M}_p$.  
\begin{lem}\label{Hecont}
 $H_\e$ is continuous on $\mathcal{M}_p^{(\e)} = \{\zeta \in \mathcal{M}_p : \zeta(\{\e\}) = 0 \}$. 
\end{lem}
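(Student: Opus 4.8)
The plan is to mimic the proof of Lemma \ref{barHecont} almost verbatim, since the only difference between $\bar H_\e$ and $H_\e$ is that the summands $x_i(\tau_i-1)\ind{x_i>\e}$ are replaced by $x_i\tau_i\ind{x_i>\e}$, which changes nothing structurally. First I would take a sequence $\zeta_n \overset{v}{\ra} \zeta$ with $\zeta \in \mathcal{M}_p^{(\e)}$, i.e. $\zeta(\{\e\})=0$. By \cite[Proposition 3.13]{rEVRVPP}, vague convergence together with $\zeta$ having no atom at $\e$ gives, for all large $n$, an integer $M$ and an enumeration of the points of $\zeta$ and $\zeta_n$ in $(\e,\infty)$ such that $\zeta(\cdot\cap(\e,\infty)) = \sum_{i=1}^M \d_{x_i}$ and $\zeta_n(\cdot\cap(\e,\infty)) = \sum_{i=1}^M \d_{x_i^{(n)}}$ with $(x_1^{(n)},\ldots,x_M^{(n)}) \ra (x_1,\ldots,x_M)$.

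Next I would note that, since $H_\e(\zeta)$ and $H_\e(\zeta_n)$ depend only on the points exceeding $\e$, we have $H_\e(\zeta_n)(\cdot) = \Pv(\sum_{i=1}^M x_i^{(n)}\tau_i \in \cdot)$ and likewise $H_\e(\zeta)(\cdot) = \Pv(\sum_{i=1}^M x_i\tau_i \in \cdot)$. Since $\sum_{i=1}^M x_i^{(n)}\tau_i \to \sum_{i=1}^M x_i\tau_i$ almost surely (a finite sum with converging deterministic coefficients), the distributions converge weakly, i.e. $H_\e(\zeta_n) \to H_\e(\zeta)$ in $\mathcal{M}_1$. This establishes sequential continuity of $H_\e$ on $\mathcal{M}_p^{(\e)}$, and since $\mathcal{M}_p$ (and hence the subset $\mathcal{M}_p^{(\e)}$) is metrizable, sequential continuity is continuity.

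There is essentially no obstacle here beyond bookkeeping; the substantive content is entirely contained in \cite[Proposition 3.13]{rEVRVPP} and in the proof of Lemma \ref{barHecont}, which the paper has already carried out. The one point to be mildly careful about is that the enumeration delivered by \cite[Proposition 3.13]{rEVRVPP} only controls the points in $(\e,\infty)$, but since the indicator $\ind{x_i>\e}$ kills all other points this is exactly what is needed. Accordingly I would simply write:

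\begin{proof}
The proof is identical to that of Lemma \ref{barHecont}, with $x_i(\tau_i-1)$ replaced throughout by $x_i\tau_i$. Let $\zeta_n \overset{v}{\ra} \zeta \in \mathcal{M}_p^{(\e)}$. By \cite[Proposition 3.13]{rEVRVPP} there exist an integer $M$ and a labelling of the points of $\zeta$ and $\zeta_n$ (for $n$ sufficiently large) such that
\[
 \zeta(\cdot \cap (\e,\infty)) = \sum_{i=1}^M \d_{x_i}, \quad\text{and}\quad \zeta_n(\cdot \cap (\e,\infty)) = \sum_{i=1}^M \d_{x_i^{(n)}},
\]
with $(x_1^{(n)}, \ldots, x_M^{(n)}) \ra (x_1,\ldots,x_M)$ as $n\ra\infty$. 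Since $H_\e(\zeta)$ depends only on the points of $\zeta$ exceeding $\e$,
\[
 \lim_{n\ra\infty} H_\e(\zeta_n)(\cdot) = \lim_{n\ra\infty} \Pv\left( \sum_{i=1}^M x_i^{(n)} \tau_i \in \cdot \right) = \Pv\left( \sum_{i=1}^M x_i \tau_i \in \cdot \right) = H_\e(\zeta)(\cdot)
\]
in the space $\mathcal{M}_1$, the middle equality following from the $\Pv$-a.s.\ convergence $\sum_{i=1}^M x_i^{(n)}\tau_i \to \sum_{i=1}^M x_i\tau_i$. As $\mathcal{M}_p$ is metrizable, this sequential continuity gives the claim.
\end{proof}
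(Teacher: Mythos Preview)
Your proof is correct and is exactly what the paper does: it simply states that the proof is identical to that of Lemma~\ref{barHecont}, and your write-up carries out that identical argument with $x_i(\tau_i-1)$ replaced by $x_i\tau_i$.
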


An immediate consequence of Lemma \ref{Hecont} and Proposition
\ref{PPconvergence} is 
\be\label{twqlA}
H_\e(N_{n,\w}) \overset{Q}{\Lra} H_\e(N_{\l,\k}).
\ee
We divide the remainder of the proof of Theorem \ref{wqlTnA} into two
cases: $\k\in(0,1)$ and $\k\in[1,2)$.

\subsection{Case I: $\k \in (0,1)$}
The case $\k\in(0,1)$ is almost identical to the proof of Theorem
\ref{wqlTn}. Due to \eqref{twqlA}, it is enough to show that  
\be\label{econkl1}
 \lim_{\e\ra 0^+} H_\e(N_{\l,\k}) = H(N_{\l,\k}), \quad \Pv\text{-a.s.}
\ee
and
\be\label{ipconkl1}
 \lim_{\e\ra 0} \limsup_{n\ra\infty} Q\left( \rho(H_\e(N_{n,\w}), H(N_{n,\w})) \geq \d \right) = 0, \quad \forall \d>0. 
\ee
The proof of \eqref{econkl1} is similar to that of \eqref{econ}. The
main difference between the proof of 
\eqref{ipconkl1} and that of \eqref{ipcon} is that now we are using
the fact that for any two random variables $X$ and $Y$ defined on the
same probability space, with respective laws ${\mathcal L}_{X}$ and
${\mathcal L}_{Y}$, $\rho({\mathcal L}_{X}, {\mathcal L}_{Y})\leq
\bigl( E|X-Y|\bigr)^{1/2}$, after which one uses once again
\eqref{ETnutail} and Karamata's theorem.

\subsection{Case II: $\k \in [1,2)$}
The difference in this case is that centering is needed. Let 
\[
 c_n(\e) = 
\begin{cases}
 E_Q\left[ \b_1 \ind{\b_1 \in (\e n , \bar\nu n]} \right] & \text{ if } \k =1 \\
 n^{1-1/\k} E_Q\left[ \b_1 \ind{\b_1 > \e n^{1/\k}} \right] & \text{ if } \k \in (1,2).
\end{cases}
\]
Recalling the definitions from the statement of Proposition
\ref{bigtransferA}, we see that the tail decay of $\b_1$ implies that 
\[
 \lim_{n\ra\infty} c_n(\e) = 
\begin{cases}
 \tilde{c}_{\l,1}(\e) & \text{ if } \k = 1 \\
 c_{\l,\k}(\e)  & \text{ if } \k \in (1,2) 
\end{cases}
, \quad\text{where } \l = \k \tc. 
\]
Combining this with \eqref{twqlA} we obtain that
\be\label{ecclaim}
 H_\e(N_{n,\w}) * \d_{-c_n(\e)} \Lra 
\begin{cases}
 H_\e(N_{\l,1}) * \d_{-\tilde{c}_{\l,1}(\e)} & \text{ if } \k = 1 \\
 H_\e(N_{\l,\k}) * \d_{-c_{\l,\k}(\e)} & \text{ if } \k \in (1,2).
\end{cases}
\ee

We use, once again, \cite[Theorem 3.2]{bCOPM}. By \eqref{ecclaim}, in
the case $\k \in (1,2)$, 
weak convergence of the measures $\s_{n,\w}$ in \eqref{sHNn} will
follow once we show that
\be\label{econA2}
 H_\e(N_{\l,\k}) * \d_{-c_{\l,\k}(\e)} \quad \text{converges $\Pv$-a.s. as } \e\ra 0^+,
\ee
and 
\be\label{ipconA2}
 \lim_{\e\ra 0} \limsup_{n\ra\infty} Q\left( \rho\left( H(N_{n,\w}) * \d_{-\bar\b n^{1-1/\k}}, H_\e(N_{n,\w}) * \d_{-c_n(\e)}\right) \geq \d \right) = 0, \quad \forall \d>0. 
\ee
The argument in the case $\k=1$ is exactly the same if one replaces
every instance of $\bar\b n^{1-1/\k}$ and $c_{\l,\k}(\e)$ with $D'(n)$
and $\tilde{c}_{\l,1}(\e)$, respectively. Thus we will only give the
proof in the case $\k \in (1,2)$.  

To prove \eqref{econA2}, let $\xi_1>\xi_2>\ldots$ be the points of
$N_{\l,\k}$. By Theorem
3.12.2 in \cite{samorodnitsky:taqqu:1994}, the shifted truncated sums 
$$
\sum_{i\geq 1} \xi_i \tau_i \ind{\xi_i > \e}- c_{\l,\k}(\e)
$$
converge a.s. as $\e\ra 0^+$. The convergence above is true for almost every realization of the joint sequence $(\xi_i, \tau_i)_{i\geq 1}$, but by Fubini's theorem the same remains true for a.e. realization of the Poisson process $N_{\l,\k}$. Since
a.s. convergence implies weak convergence, we obtain \eqref{econA2}. 

Turning now to the proof of \eqref{ipconA2}, we use the same upper
bound on the Prohorov's distance as in the proof of Theorem
\ref{wqlTn}. Since 
$\bar{\b}n^{1-1/\k} - c_n(\e) = n^{1-1/\k} E_Q[ \b_1
  \ind{\b_1/n^{1/\k} \leq \e} ]$, we have
\begin{align*}
& \rho\left( H(N_{n,\w}) * \d_{-\bar\b n^{1-1/\k}}, H_\e(N_{n,\w}) *
\d_{-c_n(\e)}\right) \\
& \leq \left( \frac{2}{n^{2/\k}}
\left(\sum_{i=1}^n \bigl\{ \beta_i \ind{ \b_i/n^{1/\k} \leq \e } - E_Q[ \b_1
  \ind{\b_1/n^{1/\k} \leq \e} ]\bigl\}\right)^2\right)^{1/3} \\
&\qquad + \left( \frac{2}{n^{2/\k}} \sum_{i=1}^n  \beta_i^2 \ind{
  \b_i/n^{1/\k} \leq \e }\right)^{1/3}. 
\end{align*}
Therefore,
\begin{align}
 & \limsup_{n\to\infty} Q\left( \rho\left( H(N_{n,\w}) * \d_{-\bar\b
  n^{1-1/\k}}, H_\e(N_{n,\w}) * \d_{-c_n(\e)}\right)\geq \d \right) \nonumber \\
& \leq \limsup_{n\to\infty} Q\left( \frac{2}{n^{2/\k}} \left(\sum_{i=1}^n \bigl\{ \beta_i \ind{ \b_i/n^{1/\k} \leq \e } - E_Q[ \b_1
  \ind{\b_1/n^{1/\k} \leq \e} ]\bigl\}\right)^2 \geq \frac{\d^3}{8} \right) \label{betasmall}\\
& \qquad + \limsup_{n\to\infty} Q\left( \frac{2}{n^{2/\k}} \sum_{i=1}^n  \beta_i^2 \ind{
  \b_i/n^{1/\k} \leq \e } \geq \frac{\d^3}{8} \right) \label{betasquared}. 
\end{align}
Lemma \ref{ble} implies that \eqref{betasmall} vanishes as $\e\ra 0$, and (as in the proof of Theorem \ref{wqlTn}) Markov's inequality, \eqref{ETnutail} and Karamata's
theorem imply that \eqref{betasquared} vanishes as $\e \ra 0$ as well. 
This completes the proof of a limiting distribution for
$\s_{n,\w}$, and the proof of Theorem \ref{wqlTnA} follows by an
application of Proposition \ref{bigtransferA}.

\section{Converting from time to space}\label{Tn2Xn}

In this section we show that the  weak quenched limit theorem for the 
hitting times $T_n$ in Theorem \ref{wqlTnA}  implies the 
weak quenched limit theorem for the random walk $X_n$ in Corollary
\ref{wqlXn}.

For any $t\geq 0$, let 
\[
 X_t^* = \max \{ X_k : k\leq t \} = \max \{ n \in \Z: T_n \leq t \}
\]
be the farthest the random walk has traversed to the right by time $t$. The usefulness of $X_t^*$ stems from the identity of the events
\be\label{eventeq}
 \{ X_t^* < x \} = \{ T_x > t \} \quad \text{and} \quad \{ X_t^* \geq x \} = \{ T_x \leq t \}. 
\ee
The following lemma implies that $X_n$ typically is very close to $X_n^*$. 
\begin{lem}\label{XXstarlem}
 Let Assumptions \ref{iidasm} and \ref{tasm} hold. Then, $\limsup_{n\ra\infty} \frac{X_n^* - X_n}{\log n} <\infty$, $\P$-a.s.
\end{lem}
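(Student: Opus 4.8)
\emph{Plan.} The idea is to combine the elementary relation \eqref{eventeq} between the running maximum and the hitting times, a union bound over the value of $X_n^*$, and the fact that a transient RWRE backtracks only logarithmically far. First I would record two trivialities: $X_n^*\le n$ (each step changes $X$ by $\pm 1$ and $X_0=0$), and $X_n^*\to+\infty$ $\P$-a.s., since Assumptions \ref{iidasm}--\ref{tasm} imply transience to $+\infty$. Now suppose $X_n^*-X_n\ge k$ and set $m:=X_n^*\in\{0,1,\dots,n\}$; then the walk reaches level $m$ by time $T_m\le n$, and at the later time $n\ge T_m$ it sits at height $X_n\le m-k$, so during the time interval $[T_m,n]$ it must visit $m-k$. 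Hence
\[
\{X_n^*-X_n\ge k\}\subseteq\bigcup_{m=0}^{n}\bigl(\{T_m<\infty\}\cap\{\exists\, t\ge T_m:\ X_t=m-k\}\bigr).
\]
By the quenched strong Markov property at $T_m$ and the shift invariance of the i.i.d.\ measure $P$, for every $m\ge 0$,
\[
\P\bigl(\{T_m<\infty\}\cap\{\exists\, t\ge T_m:X_t=m-k\}\bigr)=E_P\bigl[P_\w^{m}(T_{m-k}<\infty)\bigr]=\P^0(T_{-k}<\infty),
\]
so that $\P(X_n^*-X_n\ge k)\le (n+1)\,\P^0(T_{-k}<\infty)$.

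The crucial input is then an exponential bound on the backtracking probability: there exist $c,C'\in(0,\infty)$ with $\P^0(T_{-k}<\infty)\le C'e^{-ck}$ for all $k\ge0$ (equivalently, $\min_{n\ge0}X_n$ has an exponentially decaying lower tail). This is a standard consequence of transience; it can also be obtained from the quenched formula \eqref{HPform}, as follows. Taking $i=x-1$ and letting $j\to\infty$ in $P_\w^x(T_{x-1}>T_j)=R_{x-1,x-1}/R_{x-1,j-1}=\rho_{x-1}/R_{x-1,j-1}$, and using $R_{x-1}=\rho_{x-1}(1+R_x)$ from \eqref{rwdef}, one gets $P_\w^x(T_{x-1}<\infty)=R_x/(1+R_x)$, hence, by the quenched strong Markov property,
\[
P_\w^0(T_{-k}<\infty)=\prod_{j=0}^{k-1}\frac{R_{-j}}{1+R_{-j}}.
\]
Under $P$ the sequence $(R_{-j})_{j\ge0}$ is stationary (indeed $R_{-j}=R_0\circ\theta^j$ for the shift $\theta$) with $R_0<\infty$ a.s., so one may fix $M$ with $P(R_0\le M)>\tfrac12$ and show that, outside an event of probability at most $C''e^{-c''k}$, at least $k/3$ of the factors above are $\le M/(M+1)$; since the remaining factors are $\le 1$, this gives $P_\w^0(T_{-k}<\infty)\le (M/(M+1))^{k/3}$ on the good event, hence $\P^0(T_{-k}<\infty)\le (M/(M+1))^{k/3}+C''e^{-c''k}\le C'e^{-ck}$.

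Finally, fixing $C>2/c$ and taking $k_n=\lceil C\log n\rceil$, the two displays combine to give $\P(X_n^*-X_n\ge k_n)\le (n+1)\,C'n^{-cC}\le C''' n^{-(cC-1)}$, which is summable because $cC-1>1$. By the Borel--Cantelli lemma, $\P$-a.s.\ one has $X_n^*-X_n< C\log n+1$ for all sufficiently large $n$, and therefore $\limsup_{n\to\infty}(X_n^*-X_n)/\log n\le C<\infty$, as claimed. The main obstacle is the exponential backtracking estimate of the second paragraph: the reduction to $\P^0(T_{-k}<\infty)$ and the Borel--Cantelli step are routine, but establishing the required concentration for $\#\{0\le j<k:R_{-j}\le M\}$ needs a decoupling input for the non-i.i.d.\ sequence $(R_{-j})$, in the spirit of the exponential estimates \eqref{Wtail}--\eqref{Wcutoff}; alternatively this backtracking bound can simply be cited from \cite{pThesis,pzSL1}.
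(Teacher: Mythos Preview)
Your proof is correct and follows the same route as the paper: a union bound over the value of $X_n^*$, translation invariance of $P$ to reduce to $\P(T_{-k}<\infty)$, the exponential backtracking estimate $\P(T_{-k}<\infty)\le C'e^{-ck}$, and Borel--Cantelli with $k\sim C\log n$. The only difference is that the paper simply cites this backtracking estimate from \cite[Lemma~3.3]{gsMVSS} rather than deriving it; your product formula $P_\w^0(T_{-k}<\infty)=\prod_{j=0}^{k-1}R_{-j}/(1+R_{-j})$ and proposed concentration argument are correct in outline, though (as you acknowledge) the decoupling step for the stationary but dependent sequence $(R_{-j})$ is not fully carried out.
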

\begin{proof}
 The event $\{X_n^* - X_n \geq M\}$ implies that for some $x=0,1,\ldots n-1$ the random walk after first hitting $x$ then backtracks to $x-M$. Thus,
\[
 \P( X_n^* - X_n \geq M) \leq \sum_{x=0}^{n-1} \P^x(T_{x-M}<\infty) = n \P(T_{-M}<\infty),
\]
where the last equality follows from the translation invariance of the
measure $P$ on environments. It was shown in \cite[Lemma 3.3]{gsMVSS}
that Assumptions \ref{iidasm} and \ref{tasm} imply that there exist
constants $C,C'>0$ such that $\P(T_{-M}<\infty) \leq C e^{-C'
M}$. Taking $M=K \log n$ for $K>2/C'$ we obtain that 
\[
 \P( X_n^* - X_n \geq \d (\log n)^2) \leq C n^{-(C'K-1)}, 
\]
which is  summable over $n$. The claim of the lemma now follows from
the Borel-Cantelli Lemma.
\end{proof}

We will also need the following Corollary of Theorem \ref{wqlTnA}. 
\begin{cor}\label{wqlTnAproj}
Let $\k \in (0,2)$, and let $\mu_{\l,\k}$ be the limiting random probability measure given by the conclusion of Theorem \ref{wqlTnA} (that is $\mu_{n,\w} \Lra \mu_{\l,\k}$). 
Then, $\mu_{n,\w}(x,\infty) \Lra \mu_{\l,\k}(x,\infty)$ for any $x\in\R$. 
\end{cor}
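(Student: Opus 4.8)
The plan is to derive the convergence of the one-dimensional projections $\mu_{n,\w}(x,\infty)$ from the $\mathcal{M}_1$-convergence $\mu_{n,\w}\Lra\mu_{\l,\k}$ established in Theorem \ref{wqlTnA}, using the standard fact that the evaluation map $\pi\mapsto\pi(x,\infty)$ on $\mathcal{M}_1$ fails to be continuous only at measures that charge the point $x$, together with the continuous mapping theorem \cite[Theorem 2.7]{bCOPM}. Concretely, I would first record the elementary observation that the map $e_x:\mathcal{M}_1\to[0,1]$, $e_x(\pi)=\pi(x,\infty)=1-\pi(-\infty,x]$, is continuous at every $\pi$ with $\pi(\{x\})=0$: if $\pi_n\to\pi$ in the Prohorov metric then $\pi_n(-\infty,x]\to\pi(-\infty,x]$ whenever $x$ is a continuity point of the distribution function of $\pi$, which is precisely the condition $\pi(\{x\})=0$. (Equivalently, this is the Portmanteau theorem applied to the half-line $(x,\infty)$, whose boundary $\{x\}$ is a $\pi$-null set.)

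The substantive step is then to verify that the limiting random measure $\mu_{\l,\k}$ almost surely assigns zero mass to the fixed point $x$, so that $e_x$ is a.s.\ continuous at $\mu_{\l,\k}$ and the continuous mapping theorem applies to give $e_x(\mu_{n,\w})\Lra e_x(\mu_{\l,\k})$, i.e.\ $\mu_{n,\w}(x,\infty)\Lra\mu_{\l,\k}(x,\infty)$. For this I would use the explicit representations of $\mu_{\l,\k}$ from Theorems \ref{wqlTn}--\ref{wqlTnA} and Remark \ref{r:poisson.rep}. In each case ($\k\in(0,1)$, $\k=1$, $\k\in(1,2)$) the measure $\mu_{\l,\k}$ is, conditionally on the Poisson points $(\Gamma_j)$ (equivalently on $N_{\l,\k}$), the law of a convergent random series such as $\l^{1/\k}\sum_{j\geq 1}\Gamma_j^{-1/\k}\tau_j$ (possibly shifted by a deterministic constant, and with the $\e\to0^+$ truncation limit in the cases $\k\geq 1$), where the $\tau_j$ are i.i.d.\ mean-one exponentials. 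Such a series, conditionally on the $\Gamma_j$, has a distribution that is absolutely continuous with respect to Lebesgue measure — indeed $\Gamma_1^{-1/\k}\tau_1$ already has a density and is independent of the rest of the series, so the conditional law of the total sum is a convolution with an absolutely continuous measure and hence atomless. Therefore $\mu_{\l,\k}(\{x\})=0$ for every fixed $x\in\R$, with probability one.

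The main obstacle, such as it is, lies in handling the cases $\k\in[1,2)$ cleanly: there $\mu_{\l,\k}$ is defined as an $\e\to0^+$ limit $\lim_{\e\to0^+}\bigl[H_\e(N_{\l,\k})\ast\d_{-c_{\l,\k}(\e)}\bigr]$, so one must argue that atomlessness is preserved under this limit. This is immediate once one fixes a realization of $N_{\l,\k}$ (as in Remark \ref{r:poisson.rep}, where a.s.\ convergence holds): for each small $\e$ one truncates away only finitely many of the largest points, but the remaining series $\l^{1/\k}\sum_{j>J(\e)}\Gamma_j^{-1/\k}\tau_j$ (with $J(\e)$ the number of points exceeding $\e$) still contains infinitely many independent absolutely continuous terms — in particular the single term indexed by $J(\e)+1$ is absolutely continuous and independent of everything else — so each approximating measure $H_\e(N_{\l,\k})\ast\d_{-c_{\l,\k}(\e)}$ is itself atomless; and atomlessness, i.e.\ $\sup_x\mu(\{x\})=0$, is not in general preserved under weak limits, so instead I would argue directly that the \emph{limit} series $\l^{1/\k}\sum_{j\geq1}\Gamma_j^{-1/\k}(\tau_j-1)$ (resp.\ its $\k\geq1$ analogue) is, conditionally on $(\Gamma_j)$, a convergent sum one of whose independent summands has a density, hence has an atomless conditional law. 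This bypasses the $\e$-limit entirely and yields $\mu_{\l,\k}(\{x\})=0$ a.s.\ for each fixed $x$, completing the proof.
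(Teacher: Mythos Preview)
Your proposal is correct and follows essentially the same approach as the paper: both argue that the evaluation map $\pi\mapsto\pi(x,\infty)$ is continuous at atomless $\pi$, and both establish that $\mu_{\l,\k}$ is a.s.\ atomless by isolating the summand $\xi_1\tau_1$ (equivalently $\l^{1/\k}\Gamma_1^{-1/\k}\tau_1$), which is a scaled exponential independent of the remainder, so that $\mu_{\l,\k}$ is a convolution with a continuous distribution. The paper's write-up is terser and does not separately discuss the $\e\to0^+$ limit in the $\k\in[1,2)$ cases, simply asserting the decomposition $\mu_{\l,\k}=E_1(\cdot/\xi_1)\ast\tilde\mu_{\l,\k}$ on an event of full probability, which is exactly the direct argument you settle on at the end.
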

\begin{proof}
First of all, note that the random probability measures $\mu_{\l,\k}$ are continuous distributions with probability 1. That is, $\Pv( \mu_{\l,\k}(\{x\}) > 0 ) = 0$. 
To see this, note that on an event of probability 1,  we can write $\mu_{\l,\k} = E_1(\cdot/\xi_1) \ast 
\tilde{\mu}_{\l,\k}$, where $\xi_1$ is the largest point of
the Poisson process, $E_1$ is the standard exponential distribution,
and $\tilde{\mu}_{\l,\k}$ is another random probability distribution. The
continuity of the exponential distribution then implies that $\mu_{\l,\k}$ is also continuous.  

For any $x\in\R$, the mapping $\pi \mapsto \pi(x,\infty)$ from
$\mathcal{M}_1$ to $\R$ is continuous on the set $\mathcal{C}_x = \{
\pi \in  \mathcal{M}_1: \, \pi(\{x\}) = 0 \}$. Since we showed above
that $P(\mu_{\l,\k} \in \mathcal{C}_x ) = 1$, the continuous mapping
theorem implies that $\mu_{n,\w}(x,\infty) \Lra \mu_{\l,\k}(x,\infty)$
as $n\ra\infty$. 
\end{proof}

We are now ready to give the proof of Corollary \ref{wqlXn}. 
\begin{proof}[Proof of Corollary \ref{wqlXn}]
We will first prove Theorem \ref{wqlXn} with $X_n^*$ in place of $X_n$ and then use Lemma \ref{XXstarlem} to transfer the results to $X_n$. 
Since the centering and scaling used depends on $\k$ we divide the proof into three cases: $\k\in(0,1)$, $\k=1$, and $\k\in(1,2)$.

\subsection{Case I: $\k \in (0,1)$}
If $\k \in (0,1)$, then \eqref{eventeq} implies that for $x \in \R$ fixed 
\begin{align*}
 P_\w\left( X_n^* < x n^\k \right) &= P_\w \left( T_{ \lceil x n^\k \rceil } > n \right) \\
&= P_\w \left( \frac{ T_{ \lceil x n^\k \rceil } }{ \lceil x n^\k \rceil^{1/\k} } > \frac{n}{ \lceil x n^\k \rceil^{1/\k} } \right) \\
&= \mu_{\lceil x n^\k \rceil,\w} \left( \frac{n}{ \lceil x n^\k \rceil^{1/\k} }, \infty \right)
\end{align*}
Corollary \ref{wqlTnAproj} implies that the last term above converges in distribution to $\mu_{\l,\k}(x^{-1/\k},\infty) = H(N_{\l,\k})(x^{-1/\k},\infty)$ (note that here we are using the monotonicity of distribution functions, the fact that $\mu_{\l,\k}$ is a continuous distribution with probability 1, and the fact that $n/\lceil x n^\k \rceil^{1/\k} \ra x^{-1/\k}$ as $n\ra\infty$). Thus, we have shown that
\be\label{wqlXc1}
P_\w\left( X_n^* < x n^\k \right)  \Lra  H(N_{\l,\k})(x^{-1/\k},\infty). 
\ee

Next, note that $X_n \leq X_n^*$ implies that 
\be\label{XXstarublb}
 P_\w(X_n^* < x n^\k) \leq P_\w(X_n < x n^\k) \leq P_\w(X_n^* < x n^\k + (\log n)^2 ) + P_\w(X_n^* - X_n > (\log n)^2). 
\ee
Lemma \ref{XXstarlem} implies that $ P_\w( X_n^* - X_n > (\log n)^2 )$ converges to 0 in $L^1$, and thus also in distribution. 
Therefore, \eqref{wqlXc1} and \eqref{XXstarublb} complete the proof of Theorem \ref{wqlXn} when $\k\in (0,1)$ (here we again are using the monotonicity of distribution functions and the fact that $\mu_{\l,\k} = H(N_{\l,\k})$ is continuous with probability 1).

\subsection{Case II: $\k=1$}
Recall from Remark \ref{DDprime} that the sequence $D(n)$ is given by 
\[
 D(n) = \frac{\fl{n/\bar{\nu}}}{n} D'( \fl{n/\bar{\nu}}) = \frac{\fl{n/\bar{\nu}}}{n} E_Q\left[ \b_1 \ind{\b_1 \leq \bar{\nu}\fl{n/\bar{\nu}}} \right].  
\]
Note first of all that this implies $D(n) \sim A \log n$, where $A=\tc /\bar{\nu}$. Moreover, this explicit representation also gives that
$D(y(n)) - D(x(n)) \ra 0$ as $n\ra\infty$ for any sequences $x(n),y(n) \ra\infty$ with $x(n) \sim y(n)$.

We postpone for now the definition of the averaged centering term $\d(n)$ for the random walk $X_n$. Whatever $\d(n)$ is, for fixed $x$ we let $\gamma(n) = \lceil \d(n) + x n/(\log n)^2 \rceil$. 
Then, \eqref{eventeq} implies that
\begin{align}
 P_\w\left( \frac{X_n^* - \d(n)}{n/(\log n)^2} < x \right) &= P_\w\left( X_n^* < \d(n) + x n/(\log n)^2 \right) \nonumber \\
&= P_\w \left( T_{  \gamma(n) } > n \right) \nonumber \\
&= P_\w \left( \frac{ T_{  \gamma(n)  } -  \gamma(n)  D\left(  \gamma(n)  \right) }{  \gamma(n)  } > \frac{n -  \gamma(n)  D\left(  \gamma(n)  \right) }{  \gamma(n)  } \right) \nonumber \\
&= \mu_{ \gamma(n) ,\w} \left( \frac{n -  \gamma(n)  D\left(  \gamma(n)  \right) }{  \gamma(n)  }, \infty \right). \label{pistarmu}
\end{align}
Now, we can choose $\d(n)$ so that 
\be\label{choosedelta}
 \d(n) D(\d(n)) = n + o(1), \quad \mbox{as $n\ra\infty$}. 
\ee
Then, recalling the definition of $\gamma(n)$ and the fact that $D(n) \sim A\log n$ as $n\ra\infty$, this implies that 
\[
 \gamma(n) \sim \d(n) \sim \frac{n}{A \log n}, \quad \mbox{as $n\ra\infty$}, 
\]
and 
\[
 \lim_{n\ra\infty} \frac{n - \gamma(n) D(\gamma(n))}{\gamma(n)} = -A^2 x. 
\]
(Note that in this last limit we used the fact that $D(\gamma(n))-D(\d(n)) \ra 0$ since $\d(n),\gamma(n) \ra \infty$ and $\d(n) \sim \gamma(n)$ as $n\ra\infty$). 

Recalling \eqref{pistarmu} and having chosen $\d(n)$ according to \eqref{choosedelta}, Corollary \ref{wqlTnAproj} implies that 
\[
 P_\w\left( \frac{X_n^* - \d(n)}{n/(\log n)^2} < x \right) \Lra \lim_{\e\ra 0^+} \left(H_\e(N_{\l,1}) * \d_{-c_{\l,1}(\e)} \right)(-A^2 x, \infty), \quad \forall x\in\R.
\]
Replacing $X_n^*$ with $X_n$ in the above statement is again accomplished by using Lemma \ref{XXstarlem}. The proof is essentially the same as in the case $\k\in(0,1)$ and is therefore ommitted. 

\subsection{Case III: $\k \in (1,2)$}
Let $x \in \R$ be fixed, and define $\psi(n) = \lceil n \vp + x n^{1/\k} \rceil$. 
Then \eqref{eventeq} implies that 
\begin{align*}
 P_\w\left( \frac{X_n^* - n\vp}{n^{1/\k}} < x \right) &= P_\w\left( X_n^* < n \vp  + x n^{1/\k} \right) \\
&= P_\w \left( T_{ \psi(n) } > n \right) \\
&= P_\w \left( \frac{ T_{ \psi(n) } - \psi(n)/\vp }{ \psi(n)^{1/\k} } > \frac{n - \psi(n)/\vp }{ \psi(n)^{1/\k} } \right) \\
&= \mu_{\psi(n),\w} \left( \frac{n - \psi(n)/\vp }{ \psi(n)^{1/\k} }, \infty \right)
\end{align*}
Note that 
\[
 \lim_{n\ra\infty} \frac{n - \psi(n)/\vp }{ \psi(n)^{1/\k} } = \lim_{n\ra\infty} \frac{n - \lceil n \vp  + x n^{1/\k} \rceil/\vp }{ \lceil n \vp  + x n^{1/\k} \rceil^{1/\k} } = -x \vp^{-1-1/\k}, 
\]
and thus Corollary \ref{wqlTnAproj} implies that
\[
 P_\w\left( \frac{X_n^* - n\vp}{n^{1/\k}} < x \right) \Lra \lim_{\e \ra 0^+} \left(H_\e (N_{\l,\k}) * \d_{-c_{\l,\k}(\e)}\right)(-x \vp^{-1-1/\k}, \infty), \quad \forall x\in\R. 
\]
We again omit the proof that $X_n^*$ can be replaced by $X_n$ in the above statement. 
\end{proof}

\appendix

\section{Proof of Lemma \ref{l:meas}}\label{Hmeas}

The easiest way to see the measurability of $\bar{H}$ is to represent
it as a composition of two maps, and to show that each one of these
maps is measurable. We write $\bar{H} = 
\bar{H}_2\circ \bar{H}_1$, where $\bar{H}_1:\,
\mathcal{M}_p \to l^2$ is defined by 
$$
 \bar{H}_1(\zeta)(\cdot) = 
\begin{cases}
(x_{(1)}, x_{(2)},\ldots) & \text{if} \ 
\sum_{i\geq 1} x_i^2 < \infty \\ 
{\bf 0} & \text{otherwise}, 
\end{cases}
$$
where $x_{(1)}\geq x_{(2)}\geq \ldots$ is the nonincreasing
rearrangement of the points of $\zeta = \sum_{i\geq 1} \d_{x_i}$, and
${\bf 0}$ is the zero element in $l^2$, while $\bar{H}_2:\,  l^2\to
\mathcal{M}_1$ is defined by 
$$
 \bar{H}_2({\bf x})(\cdot) = 
\Pv\left( \sum_{i\geq 1} x_i(\tau_i -1) \in \cdot \,  \right)
$$
for ${\bf x}= (x_1,x_2,\ldots)\in l^2$, where $\tau_i$ are i.i.d.\ Exp(1) random variables under the measure $\Pv$. Since the Borel $\sigma$-field
on $l^2$ coincides with its cylindrical $\sigma$-field, measurability
of the map $ \bar{H}_1$ will follow once we check both that for each
$k=1,2,\ldots$ the map $\bar{H}_{1,k}:\, \mathcal{M}_p \to \R$ defined
for $\zeta = \sum_{i\geq 1} \d_{x_i}$ by $\bar{H}_{1,k}(\zeta) =
x_{(k)}$ is measurable, and also that the set 
$$
F= \Bigl\{ \zeta = \sum_{i\geq 1} \d_{x_i}:\ \sum_{i\geq 1}
x_i^2<\infty\Bigr\}
$$
is a measurable subset of $\mathcal{M}_p$. The first statement follows
since each $\bar{H}_{1,k}$ is, clearly, a continuous map. The second
statement follows by writing $F=\cup_{m=1}^\infty F_m$, where for each
$m$, 
$$
F_m= \Bigl\{ \zeta = \sum_{i\geq 1} \d_{x_i}:\ \sum_{i\geq 1}
x_{(i)}^2\leq m\Bigr\}
$$
is, by the continuity of the maps $\bar{H}_{1,k}$ and Fatou's lemma, a
closed set. 

In order to prove measurability of the map $ \bar{H}_2$, it is enough
to prove its continuity. Let ${\bf x}^{(n)}=(x_1^{(n)},x_2^{(n)},
\ldots)$, $n=1,2,\ldots$ be a sequence in $l^2$ converging to ${\bf
  y}= (y_1,y_2,\ldots)\in l^2$. Instead of proving that $\sum_{i\geq
  1} x_i^{(n)}(\tau_i -1)$ converges weakly to $\sum_{i\geq 1}
y_i(\tau_i -1)$ it is, of course, sufficient to prove convergence in
probability. This latter convergence follows immediately because
$$
\Ev \left( \sum_{i\geq   1} x_i^{(n)}(\tau_i -1) - \sum_{i\geq 1}
y_i(\tau_i -1)\right)^2 = \| {\bf x}^{(n)}-{\bf y}\|_2^2\,.
$$

\section{Proof of Lemma \ref{SFtails}}\label{details}

The tail decay of $E_\w S$ was analyzed in \cite{p1LSL2}, but for
completeness we will briefly outline the argument here. 
By using $h$-transforms one can compute a formula for the transition
probabilities of the random walk conditioned on exiting the interval
$(0,\nu)$ to the right. Given these conditional transition
probabilities one can apply the formula \eqref{QETform} for the
quenched expectation of the amount of time to move one step to the
right. Before giving this formula we need to introduce some
notation. Recall that $M_1 = \max\{ \Pi_{0,j}:\, 0\leq j<\nu \}$. Let
$i_0 = \max \{ i\in[1,\nu]: \Pi_{0,i-1} = M_1 \}$,  and denote 
\[
 M^- = \min \{ \Pi_{i,j} \, : \, 0 < i \leq j < i_0 \} \wedge 1, \quad\text{and}\quad M^+ = \max \{ \Pi_{i,j} \,:\, i_0 < i \leq j < \nu \} \vee 1. 
\]
Then, following the proof of Corollary 4.2 in \cite{p1LSL2}, one can show that for any $0<i<\nu$, 
\be\label{conditionright}
 E_\w^i\left[T_{i+1} \, \bigl| \, T_\nu < T_0 \right] \leq 1 + \frac{2\nu^3 M^+}{(M^-)^3} \leq \frac{3\nu^3 M^+}{(M^-)^3}.  
\ee
This immediately implies that $E_\w S \leq \frac{3\nu^4 M^+}{(M^-)^3}$. 
The proof of the tail decay \eqref{Stail} of $E_\w S$ is then accomplished by recalling \eqref{nutail} and the following Lemma from \cite{p1LSL2}.
\begin{lem}[Lemma 4.1 in \cite{p1LSL2}]\label{Mpmlem}
 For any $0<\e<1$ and $\e',\d>0$, 
\[
 Q( M^+ > n^\d, \, M_1 > n^{(1-\e)/\k} ) = o( n^{-1+\e-\d\k+\e'}),
\]
and
\[
 Q(M^- < n^{-\d}, \, M_1 > n^{(1-\e)/\k} ) = o( n^{-1+\e-\d\k+\e'}).
\]
\end{lem}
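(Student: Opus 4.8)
The plan is to prove both estimates in parallel, using only the exponential tail of $\nu$ from \eqref{nutail} together with Markov's inequality and the normalization $E_P[\rho_0^\k]=1$ from Assumption~\ref{kasm}. Recall that $Q$ agrees with $P$ on $\s(\w_x:\, x\ge 0)$, so under $Q$ the random variables $\rho_x$, $x\ge 0$, are i.i.d.\ with $E_Q[\rho_0^\k]=1$, and hence $E_Q[\Pi_{a,b}^\k]=\bigl(E_P[\rho_0^\k]\bigr)^{b-a+1}=1$ for all $0\le a\le b$. As a first step I would truncate: since $Q(\nu>K\log n)\le C'n^{-CK}$ by \eqref{nutail}, choosing $K$ with $CK>1+\d\k$ makes this term $o(n^{-1-\d\k})$, which is negligible compared with the target $o(n^{-1+\e-\d\k+\e'})$. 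It therefore suffices to bound $Q\bigl(M^+>n^\d,\, M_1>n^{(1-\e)/\k},\,\nu\le K\log n\bigr)$ and the analogous quantity with $M^-<n^{-\d}$.

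For the bound on $M^+$, observe that on $\{i_0=\ell\}\cap\{M_1>n^{(1-\e)/\k}\}$ one has $\Pi_{0,\ell-1}=M_1>n^{(1-\e)/\k}$, while on $\{\nu\le K\log n\}\cap\{M^+>n^\d\}$ (with $n$ large enough that $n^\d>1$) there exist integers $\ell<i\le j<\nu\le K\log n$ with $\Pi_{i,j}>n^\d$. A union bound over these at most $C(\log n)^3$ triples $(\ell,i,j)$ gives
\[
 Q\bigl(M^+>n^\d,\, M_1>n^{(1-\e)/\k},\,\nu\le K\log n\bigr)\ \le\ \sum_{\ell,i,j} Q\bigl(\Pi_{0,\ell-1}>n^{(1-\e)/\k},\ \Pi_{i,j}>n^\d\bigr).
\]
In each summand $\ell$ and $i\le j$ are \emph{fixed} integers with $\ell<i$, so the index blocks $\{0,\dots,\ell-1\}$ and $\{i,\dots,j\}$ are disjoint, the two events are independent under $Q$, and Markov's inequality bounds the factors by $n^{-(1-\e)}$ and $n^{-\d\k}$ respectively. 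Every summand is thus at most $n^{-1+\e-\d\k}$, and since the prefactor $C(\log n)^3$ is $o(n^{\e'})$, the first estimate follows.

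For the bound on $M^-$ I would argue similarly: on $\{i_0=\ell,\, M_1>n^{(1-\e)/\k},\, M^-<n^{-\d},\,\nu\le K\log n\}$ we have $\Pi_{0,\ell-1}>n^{(1-\e)/\k}$ and there exist integers $0<i\le j<\ell\le K\log n$ with $\Pi_{i,j}<n^{-\d}$. Decomposing $\Pi_{0,\ell-1}=\Pi_{0,i-1}\,\Pi_{i,j}\,\Pi_{j+1,\ell-1}$ into sub-products over disjoint blocks, on this event $\Pi_{0,i-1}\Pi_{j+1,\ell-1}=\Pi_{0,\ell-1}/\Pi_{i,j}>n^{(1-\e)/\k+\d}$; the event $\{\Pi_{0,i-1}\Pi_{j+1,\ell-1}>n^{(1-\e)/\k+\d}\}$ is independent of $\{\Pi_{i,j}<n^{-\d}\}$, and Markov's inequality (with $E_Q\bigl[(\Pi_{0,i-1}\Pi_{j+1,\ell-1})^\k\bigr]=1$) bounds the former by $n^{-(1-\e)-\d\k}=n^{-1+\e-\d\k}$. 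A union bound over the at most $C(\log n)^3$ triples $(\ell,i,j)$ then finishes the proof.

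The point requiring care — more a matter of bookkeeping than a real obstacle — is the decoupling: one must resist conditioning on the \emph{random} quantities $\nu$ and $i_0$, which would reintroduce dependence between the environment blocks, and instead use them only to index a \emph{finite} union over deterministic configurations, with the exponential tail \eqref{nutail} guaranteeing that this union is short enough for the polylogarithmic loss to be absorbed into $n^{\e'}$. Everything else reduces to Markov's inequality for products of i.i.d.\ weights with unit $\k$-th moment.
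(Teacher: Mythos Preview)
The paper does not actually prove this lemma; it is quoted verbatim as Lemma~4.1 of \cite{p1LSL2} and used as a black box in Appendix~\ref{details}, so there is no in-paper proof to compare against. Your argument is correct and is the natural elementary proof: truncate on $\{\nu\le K\log n\}$ via \eqref{nutail}, take a union bound over the $O((\log n)^3)$ deterministic triples $(\ell,i,j)$, and then for each fixed triple use independence of $\Pi$ over disjoint index blocks together with Markov's inequality and $E_Q[\Pi_{a,b}^\k]=1$. The decomposition $\Pi_{0,\ell-1}=\Pi_{0,i-1}\Pi_{i,j}\Pi_{j+1,\ell-1}$ in the $M^-$ case is exactly the right trick; note that the independence you mention there is in fact not even needed, since the single event $\{\Pi_{0,i-1}\Pi_{j+1,\ell-1}>n^{(1-\e)/\k+\d}\}$ already gives the bound $n^{-1+\e-\d\k}$ by Markov. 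Your closing remark about not conditioning on the random $\nu,i_0$ but only using them to index a finite deterministic union is precisely the point that makes the decoupling legitimate.
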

Applying this lemma and recalling from \eqref{nutail} that $\nu$ has
exponential tails, we obtain that for any $0<\e<1$ and $\e',\d>0$, 
\begin{align*}
 Q\left( E_\w S > n^{5\d}, \, M_1 > n^{(1-\e)/\k} \right) 
&\leq Q(  \nu^4 > n^{\d} ) + Q(M^+ > n^{\d}, \, M_1 > n^{(1-\e)/\k}) \\
&\quad + Q(M^- < n^{-\d}, \, M_1 > n^{(1-\e)/\k}) \\
&= o(n^{-1+\e-\d \k + \e'}).
\end{align*}
Choosing $5\d =6\e/\k$ completes the proof of \eqref{Stail}.

The proof of \eqref{Ftail} is similar. We note first of all that 
\begin{align*}
 E_\w F^{(1)} &= 1  + E_\w^{-1}\left[ T_0 \right]P_\w\left( X_1 = -1 \, | \, T_0^+ < T_\nu \right) + E_\w^1\left[ T_0 \, | \, T_0 < T_\nu \right]P_\w\left( X_1 = 1 \, | \, T_0^+ < T_\nu \right)\\
&\leq 1  + E_\w^{-1}\left[ T_0 \right] + E_\w^1\left[ T_0 \, | \, T_0 < T_\nu \right] \\
&= 2+2W_{-1} + E_\w^1\left[ T_0 \, | \, T_0 < T_\nu \right].
\end{align*}
It was shown in \cite[Lemma 2.2]{pzSL1} that $W_{-1}$ has exponential
tails under the measure $Q$, so we only need to anlayze the tails of
the $E_\w^1\left[ T_0 \, | \, T_0 < T_\nu \right]$.  
To this end, the proof of \eqref{conditionright} can be modified by
instead conditioning on exiting the interval $(0,\nu)$ to the left in
order to obtain that 
\[
 E_\w^i\left[T_{i-1} \, \bigl| \, T_0 < T_\nu \right] \leq \frac{3\nu^3 (M^+)^3}{M^-}, \quad \text{ for any } 0<i<\nu.
\]
Then, as was done above for $E_\w S$, we can use \eqref{nutail} and
Lemma \ref{Mpmlem} to obtain that for any $0<\e<1$ and $\e',\d>0$,  
\[
 Q\left( E_\w^1\left[ T_0 \, | \, T_0 < T_\nu \right] > n^{5\d}, \,
 M_1 > n^{(1-\e)/\k} \right) = o(n^{-1+\e-\d \k + \e'}). 
\]
Choosing again $5\d =6\e/\k$ proves \eqref{Ftail}.



\bibliographystyle{plain}  
\bibliography{RWRE}

\def\cprime{$'$}
\begin{thebibliography}{10}

\bibitem{aRWRE}
S.~Alili.
\newblock Asymptotic behaviour for random walks in random environments.
\newblock {\em J. Appl. Probab.}, 36(2):334--349, 1999.

\bibitem{bCOPM}
Patrick Billingsley.
\newblock {\em Convergence of probability measures}.
\newblock Wiley Series in Probability and Statistics: Probability and
  Statistics. John Wiley \& Sons Inc., New York, second edition, 1999.
\newblock A Wiley-Interscience Publication.

\bibitem{breiman:1965}
L.~Breiman.
\newblock On some limit theorems similar to the arc-sine law.
\newblock {\em Theory of Probability and Its Applications}, 10:323--331, 1965.

\bibitem{dhStationaryPP}
Richard~A. Davis and Tailen Hsing.
\newblock Point process and partial sum convergence for weakly dependent random
  variables with infinite variance.
\newblock {\em Ann. Probab.}, 23(2):879--917, 1995.

\bibitem{dgWQL}
Dmitry Dolgopyat and Ilya Goldsheid.
\newblock Quenched limit theorems for nearest neighbour random walks in 1d
  random environment.
\newblock Preprint, available at arXiv:math/1012.2503v1, 2010.

\bibitem{estzWQL}
Nathana{\"e}l Enriquez, Christophe Sabot, Laurent Tournier, and Olivier Zindy.
\newblock Annealed and quenched fluctuations for ballistic random walks in
  random environment on $\mathbb{Z}$.
\newblock Preprint, available at arXiv:math/1012.1959v1, 2010.

\bibitem{eszAging}
Nathana{\"e}l Enriquez, Christophe Sabot, and Olivier Zindy.
\newblock Aging and quenched localization for one-dimensional random walks in
  random environment in the sub-ballistic regime.
\newblock {\em Bull. Soc. Math. France}, 137(3):423--452, 2009.

\bibitem{eszStable}
Nathana{\"e}l Enriquez, Christophe Sabot, and Olivier Zindy.
\newblock Limit laws for transient random walks in random environment on
  {$\mathbb{Z}$}.
\newblock {\em Ann. Inst. Fourier (Grenoble)}, 59(6):2469--2508, 2009.

\bibitem{gsMVSS}
Nina Gantert and Zhan Shi.
\newblock Many visits to a single site by a transient random walk in random
  environment.
\newblock {\em Stochastic Process. Appl.}, 99(2):159--176, 2002.

\bibitem{gQCLT}
Ilya~Ya. Goldsheid.
\newblock Simple transient random walks in one-dimensional random environment:
  the central limit theorem.
\newblock {\em Probab. Theory Related Fields}, 139(1-2):41--64, 2007.

\bibitem{kksStable}
H.~Kesten, M.~V. Kozlov, and F.~Spitzer.
\newblock A limit law for random walk in a random environment.
\newblock {\em Compositio Math.}, 30:145--168, 1975.

\bibitem{pThesis}
Jonathon Peterson.
\newblock {\em Limiting distributions and large deviations for random walks in
  random environments}.
\newblock PhD thesis, University of Minnesota, 2008.
\newblock Available at arXiv:0810.0257v1.

\bibitem{p1LSL2}
Jonathon Peterson.
\newblock Quenched limits for transient, ballistic, sub-{G}aussian
  one-dimensional random walk in random environment.
\newblock {\em Ann. Inst. Henri Poincar{\'e} Probab. Stat.}, 45(3):685--709,
  2009.

\bibitem{pzSL1}
Jonathon Peterson and Ofer Zeitouni.
\newblock Quenched limits for transient, zero speed one-dimensional random walk
  in random environment.
\newblock {\em Ann. Probab.}, 37(1):143--188, 2009.

\bibitem{rEVRVPP}
Sidney~I. Resnick.
\newblock {\em Extreme values, regular variation and point processes}.
\newblock Springer Series in Operations Research and Financial Engineering.
  Springer, New York, 2008.
\newblock Reprint of the 1987 original.

\bibitem{samorodnitsky:taqqu:1994}
G.~Samorodnitsky and M.S. Taqqu.
\newblock {\em {S}table {N}on-{G}aussian {R}andom {P}rocesses}.
\newblock Chapman and Hall, New York, 1994.

\bibitem{stRPD}
Tokuzo Shiga and Hiroshi Tanaka.
\newblock Infinitely divisible random probability distributions with an
  application to a random motion in a random environment.
\newblock {\em Electron. J. Probab.}, 11:no. 44, 1144--1183 (electronic), 2006.

\bibitem{sRWRE}
Fred Solomon.
\newblock Random walks in a random environment.
\newblock {\em Ann. Probability}, 3:1--31, 1975.

\bibitem{zRWRE}
Ofer Zeitouni.
\newblock Random walks in random environment.
\newblock In {\em Lectures on probability theory and statistics}, volume 1837
  of {\em Lecture Notes in Math.}, pages 189--312. Springer, Berlin, 2004.

\end{thebibliography}

\end{document}